\documentclass[onefignum,onetabnum]{siamart171218}





\usepackage{lipsum}
\usepackage{amsfonts}
\usepackage{graphicx}
\usepackage{epstopdf}
\usepackage{algorithm}
\usepackage{algorithmic}
\usepackage{tikz}
\usetikzlibrary{arrows}

\usepackage{multirow}
\usepackage{listings}
\usepackage{mathtools}
\usepackage{latexsym,amsmath,amsfonts,amscd}
\usepackage{bbm}
\usepackage{caption}
\usepackage{subcaption}

\newtheorem{thm}{Theorem}
\newtheorem{rmk}{Remark}

\DeclareMathOperator*{\vecm}{vec}

\ifpdf
  \DeclareGraphicsExtensions{.eps,.pdf,.png,.jpg}
\else
  \DeclareGraphicsExtensions{.eps}
\fi


\headers{Compact FD for Incompressible Flow}{H. Li and X. Zhang}

\title{A high order accurate bound-preserving compact finite difference scheme for two-dimensional incompressible flow
}

\author{Hao Li
\and Xiangxiong Zhang \thanks{Department of Mathematics,
Purdue University,
150 N. University Street,
West Lafayette, IN 47907-2067 (\email{li2497@purdue.edu}, \email{zhan1966@purdue.edu}). Research is supported by NSF DMS-1913120.}}

\usepackage{amsopn}
\graphicspath{{graphics/}}

\begin{document}

\maketitle

\begin{abstract}
For solving two-dimensional incompressible flow in the vorticity form by the fourth-order compact finite difference scheme and explicit strong stability preserving (SSP) temporal discretizations, we show that the simple bound-preserving limiter in \cite{li2018high} can enforce the strict bounds of the vorticity, if the velocity field satisfies a discrete divergence free constraint. For reducing oscillations, a modified TVB limiter adapted from \cite{cockburn1994nonlinearly} is constructed without affecting the bound-preserving property. This bound-preserving finite difference method can be used for any passive convection equation with a divergence free velocity field.
\end{abstract}

\begin{keywords}
Finite difference, monotonicity, bound-preserving, discrete maximum principle, passive convection, incompressible flow, total variation bounded limiter.
\end{keywords}

\begin{AMS}
65M06, 65M12
\end{AMS}

\section{Introduction}
In this paper, we are interested in constructing high order compact finite difference schemes solving the following two-dimensional time-dependent incompressible Euler equation in vorticity and  stream-function formulation
\begin{subequations}
\label{incompressibleEuler}\begin{eqnarray}
\omega_t+(u\omega)_x+(v\omega)_y &=& 0, \label{incompressible1}\\
\psi&=&\Delta\omega, \label{poisson1}\\
 \langle u,v\rangle &=&\langle-\psi_y,\psi_x\rangle, \label{uv1}
\end{eqnarray} 
\end{subequations}
with periodic boundary conditions and suitable initial conditions. In the above formulation, $\omega$ is the vorticity, $\psi$ is the stream function, $\langle u,v\rangle $ is the velocity and $Re$ is the Reynolds number.

For simplicity, we only focus on the incompressible Euler equation \eqref{incompressibleEuler}. With explicit time discretizations, the extension of high order accurate bound-preserving compact finite difference scheme to Navier-Stokes equation
\begin{equation}
\label{incompressibleNS}
\omega_t+(u\omega)_x+(v\omega)_y = \frac{1}{Re}\Delta \omega
\end{equation}
would be straightforward following the approach in \cite{li2018high}. 

The equation \eqref{uv1} implies the incompressbilility condition 
\begin{equation}
u_x+v_y=0.
\label{divfree}
\end{equation}
Due to \eqref{divfree}, \eqref{incompressible1} is equivalent to
\begin{equation}
\omega_t+u\omega_x+v\omega_y =0 
\label{nonconserv-incom}
\end{equation}
for which the initial value problem satisfies a bound-preserving property:
\[\min_{x,y} \omega(x,y,0)=m\leq \omega(x,y,t)\leq M=\max_{x,y} \omega(x,y,0).\]

If solving \eqref{nonconserv-incom} directly, it is usually easier to construct a bound-preserving scheme. For the sake of conservation, 
it is desired to solve the conservative form equation \eqref{incompressible1}. The divergence free constraint \eqref{divfree} is one of the main difficulties in solving incompressible flows. In order to enforce the bound-preserving property for \eqref{incompressible1} without losing accuracy, the incompressibility condition must be properly used since the bound-preserving property may not hold for 
\eqref{incompressible1} without \eqref{divfree}, see \cite{zhang2010maximum, zhang2012maximum2, zhang2013maximum}.

Even though the bound-preserving property and the global conservation imply certain nonlinear stability, in practice a bound-preserving high order accurate compact finite difference scheme can still produce excessive oscillations for a pure convection problem. Thus an additional limiter for reducing oscillations is often needed, e.g., 
 the total variation bounded (TVB) limiter discussed in \cite{cockburn1994nonlinearly}.
One of the main focuses of this paper is to design suitable TVB type limiters, without losing bound-preserving property. Notice that the TVB limiter for a compact finite difference scheme is designed in a quite different way from those for discontinuous Galerkin method, thus it is nontrivial to have a bound-preserving TVB limiter for the compact finite difference schemes.


The paper is organized as follows.  Section \ref{sec-cfd-review} is a review of the compact finite difference method and a simple bound-preserving limiter for scalar convection-diffusion equations.  In Section \ref{sec-incompressible1}, we show that the compact finite difference scheme can be rendered bound-preserving if the velocity field satisfies a discrete divergence free condition.  We discuss the bound-preserving property of a TVB limiter in Section \ref{sec-tvb-limiter}.  Numerical tests are shown in Section \ref{sec-numeric-test}. Concluding remarks are given in Section \ref{sec-rmk}.
\section{Review of compact finite difference method}
\label{sec-cfd-review}
In this section we review the compact finite difference method and a bound-preserving limiter in \cite{li2018high}.  

\subsection{A fourth-order accurate compact finite difference scheme}
Consider a smooth  function $f(x)$ on the interval $[0,1]$.
Let $x_i=\frac{i}{N}$ $(i=1,\cdots, N)$ be the uniform grid points on the interval $[0,1]$.  A fourth-order accurate compact finite difference approximation to derivatives on the interval $[0,1]$ is given as:
\begin{equation}\label{4th-cfd}
\begin{aligned}
\frac16( f'_{i+1}+4f'_i +f'_{i-1})= & \frac{f_{i+1}-f_{i-1}}{2\Delta x}+\mathcal{O} (\Delta x^4),\\
\frac{1}{12}( f''_{i+1}+4f''_i +f''_{i-1})= & \frac{f_{i+1}-2f_{i-1} + f_{i-1}}{\Delta x^2}+\mathcal{O} (\Delta x^4),
\end{aligned}
\end{equation}
where $f_i$, $f'_i$and $f''_i$ are point values of a function $f(x)$,  its derivative $f'(x)$ and its second order derivative $f''(x)$ at uniform grid points $x_i$ $(i=1,\cdots,N)$ respectively.

Let $\mathbf{f}$ be a column vector with numbers $f_1, f_2, \cdots, f_N$ as entries.
Let $W_1$, $W_2$, $D_x$ and $D_{xx}$ denote four linear operators as follows:
\begin{equation}\label{fir-de-diff}
 W_1\mathbf{f}=\frac16\begin{pmatrix}
              4 & 1 & & & 1\\
              1 & 4 & 1 & &  \\              
              &  \ddots & \ddots & \ddots & \\
              &  & 1 & 4 &1 \\
              1  & & & 1 & 4 \\
             \end{pmatrix}
             \begin{pmatrix}
 f_1 \\
 f_2\\
 \vdots\\
 f_{N-1}\\
 f_N
 \end{pmatrix}, D_x\mathbf{f} =
 \frac{1}{2}\begin{pmatrix}
              0 & 1 & & &  -1\\
              -1 & 0 & 1 & &  \\              
              &  \ddots & \ddots & \ddots & \\
              &  & -1 & 0 & 1 \\
              1  & & & -1 & 0 \\
             \end{pmatrix} 
  \begin{pmatrix}
 f_1 \\
 f_2\\
 \vdots\\
 f_{N-1}\\
 f_N
 \end{pmatrix},
\end{equation}
\begin{equation}\label{sec-de-diff}
W_2\mathbf{f}=\frac{1}{12}\begin{pmatrix}
              10 & 1 & & & 1\\
              1 & 10 & 1 & &  \\              
              &  \ddots & \ddots & \ddots & \\
              &  & 1 & 10 &1 \\
              1  & & & 1 & 10 \\
             \end{pmatrix}
             \begin{pmatrix}
 f_1 \\
 f_2\\
 \vdots\\
 f_{N-1}\\
 f_N
 \end{pmatrix},
 D_{xx}\mathbf{f} =
 \begin{pmatrix}
              -2 & 1 & & &  1\\
              1 & -2 & 1 & &  \\              
              &  \ddots & \ddots & \ddots & \\
              &  & 1 & -2 & 1 \\
              1  & & & 1 & -2 \\
             \end{pmatrix}
  \begin{pmatrix}
 f_1 \\
 f_2\\
 \vdots\\
 f_{N-1}\\
 f_N
 \end{pmatrix}.
\end{equation}

If $f(x)$ is periodic with with period $1$, the fourth-order compact finite difference approximation \eqref{4th-cfd} to the first order derivative and second order derivative can be denoted as
\begin{align*}
W_1 \mathbf{f}'= \frac{1}{\Delta x}D_x \mathbf{f}, \quad W_2 \mathbf{f}''= \frac{1}{\Delta x^2} D_{xx} \mathbf{f},
\end{align*}
which can be explicitly written as 
\begin{align*}
\mathbf{f}'=  \frac{1}{\Delta x} W_1^{-1} D_x \mathbf{f},\quad \mathbf{f}''=  \frac{1}{\Delta x^2} W_2^{-1} D_{xx} \mathbf{f},
\end{align*}
where $W_1^{-1}$ and $W_2^{-1}$ are the inverse operators. 
For convenience, by abusing notations we let $W_1^{-1} f_i$ denote the $i$-th entry of the vector $W_1^{-1} \mathbf f$. 

\subsection{High order time discretizations}\label{sec-time-dis}
\label{sec-highordertime}
For time discretizations, we use the strong stability preserving (SSP) Runge-Kutta and multistep methods, which are convex combinations of formal forward Euler steps. Thus we only need to discuss the bound-preserving for one forward Euler step since convex combination can preserve the bounds. 

For the numerical tests in this paper, we use a third order explicit SSP Runge–Kutta method SSPRK(3,3), see \cite{gottlieb2011strong}, which is widely known as the Shu-Osher method, with SSP coefficient $C = 1$ and effective SSP coefficient $C_{eff}=\frac{1}{3}$. For solving $u_t = F (u)$, it is given by
\begin{equation*}
\begin{aligned}
&u^{(1)}=u^n, \\
&u^{(2)}=u^{(1)}+dt F(u^{(1)}), \\
&u^{(3)}=\frac{3}{4}u^{(1)} + \frac{1}{4}(u^{(2)}+F(u^{(2)})), \\
&u^{n+1} =\frac{1}{3} u^{(1)}+\frac{2}{3}(u^{(3)}+F(u^{(3)})).
\end{aligned}
\end{equation*}

\subsection{A three-point stencil bound-preserving limiter}\label{sec-3pt-bp-limiter}
In this subsection, we review the three-point stencil bound-preserving limiter in \cite{li2018high}. Given a sequence of periodic point values $u_i$ $(i=1,\cdots, N)$, $u_0:=u_N$, $u_{N+1}:=u_1$ and constant $a\geq 2$, assume all local weighted averages are  in the range $[m, M]$:
 \begin{equation*}
 m\leq \frac{1}{a+2}( u_{i-1}+ au_{i}+ u_{i+1})\leq M, \quad i=1,\cdots, N,\quad a\geq 2.
 \end{equation*}

We separate the point values $\{u_i, i=1,\cdots, N\}$ into two classes of subsets consisting of consecutive point values. 
 In the following discussion, a {\it set} refers to a set of consecutive point values $u_l, u_{l+1}, u_{l+2}, \cdots, u_{m-1}, u_m$.
For any set $S=\{u_l, u_{l+1}, \cdots, u_{m-1}, u_m\}$, we call the first point value $u_l$ and the last point value $u_m$ as {\it boundary points},
and call the other point values  ${u_{l+1}, \cdots, u_{m-1}}$ as  {\it interior points}.
A set of class I is defined as a set satisfying the following:
\begin{enumerate}
 \item It contains at least four point values.
 \item Both {\it boundary points} are in $[m,M]$ and all {\it interior points} are out of range. 
 \item It contains both undershoot and overshoot points.
\end{enumerate}
Notice that in a set of class I, at least one undershoot point is next to an overshoot point.
For given point values $u_i,i=1,\cdots, N$, 
suppose all the sets of class I are $S_1=\{u_{m_1},u_{m_1+1},\cdots, u_{n_1}\}$, $S_2=\{u_{m_2},\cdots, u_{n_2}\}$, $\cdots$, $S_K=\{u_{m_K},\cdots, u_{n_K}\}$, 
where $m_1<m_2<\cdots< u_{m_K}$.

A set of class II consists of point values between $S_i$ and $S_{i+1}$ and two boundary points $u_{n_i}$ and $u_{m_{i+1}}$. 
Namely they are
$T_{0}=\{u_1,u_2,\cdots,u_{m_1}\}$, $T_{1}=\{u_{n_1},\cdots,u_{m_2}\}$, $T_{2}=\{u_{n_2},\cdots,u_{m_3}\}$, $\cdots$, $T_{K}=\{u_{n_K},\cdots,u_N\}$.
For periodic data $u_i$, we can combine $T_{K}$ and $T_0$ to define $T_K=\{u_{n_K},\cdots,u_{N},u_1,\cdots,u_{m_1}\}$.

In the sets of class I, the undershoot and the overshoot are neighbors. 
In the sets of class II,  the undershoot and the overshoot are separated, i.e., an overshoot is not next to any undershoot.
As a matter of fact, in the numerical tests, the sets of class I are hardly encountered. Here we include them in the discussion for the sake of completeness.
When there are no sets of class I, all point values form a single set of class II.

\begin{algorithm}[H]
\caption{A bound-preserving limiter for periodic data $u_i$ satisfying $\bar u_i\in [m, M]$}
\begin{algorithmic}[1]
\REQUIRE the input $u_i$ satisfies $\bar u_i=\frac{1}{a+2}( u_{i-1}+a u_{i}+ u_{i+1})\in [m, M]$, $a\geq 2$. Let $u_0$, $u_{N+1}$ denote $u_N$, $u_1$ respectively. 
\ENSURE the output satisfies $v_i\in[m, M], i=1,\cdots, N$ and $\sum_{i=1}^N v_i=\sum_{i=1}^N u_i$.
\STATE {\bf Step 0}: First set $v_i=u_i$, $i=1,\cdots, N$. Let $v_0$, $v_{N+1}$ denote $v_N$, $v_1$ respectively. 
\STATE {\bf Step I}: Find all the sets of class I $S_1,\cdots,S_K$ (all local saw-tooth profiles) and all the sets of class II $T_{1},\cdots, T_{K}$.
\STATE {\bf Step II}: For each $T_j$ $(j=1,\cdots, K)$, 
\FOR{all index $i$ in $T_j$}
\IF{$u_i <m$}
\STATE $v_{i-1}\leftarrow  v_{i-1}-\frac{(u_{i-1}-m)_+}{(u_{i-1}-m)_+ +(u_{i+1}-m)_+}(m-u_i)_+$
\STATE $v_{i+1} \leftarrow v_{i+1}-\frac{(u_{i+1}-m)_+}{(u_{i-1}-m)_+ +(u_{i+1}-m)_+}(m-u_i)_+$
\STATE $v_i\leftarrow m$
\ENDIF
\IF{$u_i >M$}
\STATE $v_{i-1}\leftarrow  v_{i-1}+\frac{(M-u_{i-1})_+}{(M-u_{i-1})_+ +(M-u_{i+1})_+}(u_i-M)_+$
\STATE $v_{i+1} \leftarrow v_{i+1}+\frac{(M-u_{i+1})_+}{(M-u_{i-1})_+ +(M-u_{i+1})_+}(u_i-M)_+$ 
\STATE $v_i\leftarrow M$
\ENDIF
\ENDFOR
\STATE {\bf Step III}: for each saw-tooth profile $S_j=\{u_{m_j},\cdots, u_{n_j}\}$ $(j=1,\cdots, K)$,
 let $N_0$ and $N_1$ be the numbers of undershoot and overshoot points in $S_j$ respectively. 
 \STATE Set $U_j=\sum_{i=m_j}^{n_j }v_i$.
\FOR{$i=m_j+1,\cdots, n_j-1$}
\IF{$u_i >M$}
\STATE $v_i\leftarrow M$.
\ENDIF
\IF{$u_i <m$}
\STATE $v_i\leftarrow m$.
\ENDIF
\ENDFOR

\STATE Set $V_j=N_1M +N_0 m+v_{m_j}+v_{n_j}$.
\STATE Set $A_j=v_{m_j}+v_{n_j}+N_1M-(N_1+2)m$, $B_j=(N_0+2) M-v_{m_j}-v_{n_j}-N_0 m$.
\IF {$V_j-U_j>0$}
\FOR{$i=m_j,\cdots, n_j$}
\STATE $v_i\leftarrow v_i-\frac{v_i-m}{A_j}(V_j-U_j)$
\ENDFOR
\ELSE
\FOR{$i=m_j,\cdots, n_j$}
\STATE $v_i \leftarrow v_i+\frac{M-v_i}{B_j}(U_j-V_j)$
\ENDFOR
\ENDIF
\end{algorithmic}
\label{bp-limiter}
\end{algorithm}

The algorithm \ref{bp-limiter} can enforce $\bar u_i\in [m, M]$ without losing conservation \cite{li2018high}:
\begin{thm}\label{1dconvectionthm}
Assume periodic data $u_i (i=1,\cdots, N)$ satisfies
$\bar u_i=\frac{1}{a+2}( u_{i-1}+a u_{i}+ u_{i+1})\in [m, M]$ for some fixed $a\geq 2$ and all $i=1,\cdots, N$ with $u_0:=u_N$ and $u_{N+1}:=u_1$,
then the output  of Algorithm \ref{bp-limiter} 
satisfies $\sum\limits_{i=1}^N v_i=\sum\limits_{i=1}^N u_i$ and $v_i\in [m,M],$ $\forall i$. 
\end{thm}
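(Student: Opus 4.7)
I would prove conservation and boundedness separately, using the bar constraint $\bar u_i\in[m,M]$ and $a\ge 2$ throughout. Conservation is immediate from the local structure of the updates: each block in Step~II is a three-point redistribution that raises or lowers $v_i$ to the relevant bound and removes (respectively adds) exactly the same total from $v_{i-1}+v_{i+1}$, with weights summing to $1$, so the global sum is preserved. In Step~III, one checks that $\sum_{i=m_j}^{n_j}(v_i-m)=A_j$ immediately after clipping, so the rescaling $v_i\leftarrow v_i-\tfrac{v_i-m}{A_j}(V_j-U_j)$ changes the sum by exactly $-(V_j-U_j)$, restoring $U_j$; the $B_j$ branch is symmetric.

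For boundedness in Step~II, the bar constraint first ensures that every denominator is strictly positive: if $u_i>M$, then $\bar u_i\le M$ gives $(M-u_{i-1})+(M-u_{i+1})\ge a(u_i-M)>0$, so at least one neighbor lies strictly below $M$; the undershoot case is symmetric. Next, one shows $v_i\in[m,M]$ for every $i$. If $u_i$ is itself out of range, it is clipped to $m$ or $M$, and subsequent neighbor iterations do not touch it because the weights $(u_i-m)_+$ or $(M-u_i)_+$ then vanish. If $u_i\in[m,M]$, the contribution to $v_i$ from an update at an out-of-range neighbor $u_{i\pm 1}$ is controlled by the bar inequality at that neighbor: the share directed to $v_i$ is at most $(M-u_i)/a$ in the overshoot case and $(u_i-m)/a$ in the undershoot case. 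Since at most one contribution arrives from each side and $a\ge 2$, the aggregate change to $v_i$ lies in $[-(u_i-m),\,M-u_i]$, so $v_i$ stays in $[m,M]$.

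For Step~III, clipping places each interior $v_i$ in $\{m,M\}\subset[m,M]$, while the boundary values $v_{m_j},v_{n_j}$ remain in $[m,M]$ by the Step~II analysis. The rescaling preserves the bounds provided $V_j-U_j\le A_j$ (respectively $U_j-V_j\le B_j$). To prove the former, I would write
\begin{equation*}
V_j-U_j=\sum_{\mathrm{und}}(m-u_i)-\sum_{\mathrm{over}}(u_i-M)
\end{equation*}
with both sums over interior indices of $S_j$, and bound $\sum_{\mathrm{und}}(m-u_i)$ by summing the bar inequality $(u_{i-1}-m)+(u_{i+1}-m)\ge a(m-u_i)$ over interior undershoots. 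Using $a\ge 2$ together with the identity $u_j-m=(u_j-M)+(M-m)$ for each overshoot, the telescoped left-hand side can be dominated by $\sum_{\mathrm{over}}(u_j-M)+N_1(M-m)+(v_{m_j}-m)+(v_{n_j}-m)$, which rearranges to $V_j-U_j\le A_j$. The bound $U_j-V_j\le B_j$ follows symmetrically.

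The main obstacle I anticipate is the Step~II bookkeeping when several consecutive indices are out of range in the same direction. The estimate of the share $(M-u_i)/a$ at an out-of-range neighbor uses the bar inequality at that neighbor, but if its own outer neighbor is also out of range, one term in the denominator vanishes and one must verify that the remaining single-term weight is still controlled by the sharp constant $a\ge 2$. A related difficulty appears in Step~III, where the telescoped sum of bar inequalities over $S_j$ must carefully track how overshoots, undershoots, and the two in-range boundary points interleave in order to close the bound $V_j-U_j\le A_j$.
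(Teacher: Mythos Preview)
The paper does not contain a proof of this theorem: it is quoted verbatim from \cite{li2018high} in a review subsection, with the proof deferred to that reference. There is therefore nothing in the present paper to compare your proposal against.

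On its own merits your outline is sound. The conservation argument is complete, including the check $\sum_{i=m_j}^{n_j}(v_i-m)=A_j$ after clipping in Step~III. For Step~II, the obstacle you flag is in fact handled by the very bound you wrote down: if $u_{i+1}>M$ and its outer neighbor $u_{i+2}$ is also an overshoot, then $(M-u_{i+2})_+=0$ and the whole excess $u_{i+1}-M$ goes to $v_i$, but the bar inequality at $i+1$ gives $(M-u_i)+(M-u_{i+2})\ge a(u_{i+1}-M)$ with $M-u_{i+2}<0$, so still $u_{i+1}-M\le(M-u_i)/a$. The same bar constraint forbids three consecutive overshoots, so the two-in-a-row case is the worst one.

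For Step~III there is one subtlety your sketch does not yet address: $A_j$ and $V_j$ are defined using $v_{m_j},v_{n_j}$, which are the boundary values \emph{after} Step~II, whereas the bar inequalities you plan to telescope are stated in terms of the original $u$ values. Since $u_{m_j}$ is also a boundary point of the adjacent class~II set, Step~II may have decreased it (if $u_{m_j-1}$ was an undershoot), so $v_{m_j}-m$ can be strictly smaller than $u_{m_j}-m$. Your telescoping argument therefore needs either to track this modification or to show that the class~I inequality $V_j-U_j\le A_j$ survives it; this is the genuine bookkeeping hurdle in completing the proof.
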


For the two-dimensional case, the same limiter can be used in a dimension by dimension fashion to enforce $u_{ij} \in [m,M]$.  

\section{A bound-preserving scheme for the two-dimensional incompressible flow}
\label{sec-incompressible1}

In this section we first show the fourth-order compact finite difference  with forward Euler time discretization satisfies the weak monotonicity \cite{li2018high}, thus it is bound-preserving with a naturally constructed discrete divergence-free velocity field.

For simplicity, we only consider a periodic boundary condition on a square $[0,1]\times [0,1]$.  
Let $(x_{i},y_j)=(\frac{i}{N_x},\frac{j}{N_y})$ $(i=1,\cdots, N_x,j=1, \cdots, N_y)$ be the uniform grid points on the domain $[0,1]\times[0,1]$.  All notation in this paper is consistent with those in
\cite{li2018high}.

\subsection{Weak monotonicity and bound-preserving}
\label{sec-euler}
 
Let $\lambda_1=\frac{\Delta t}{\Delta x}$ and  $\lambda_2=\frac{\Delta t}{\Delta y}$, the fourth-order compact finite difference scheme with the forward Euler method for \eqref{incompressible1}  can be given as 
\begin{eqnarray}\label{incompscheme1}
\omega^{n+1}_{ij}& = &\omega^n_{ij}-\lambda_1 [W_{1x}^{-1}D_x(\mathbf u^n\circ\boldsymbol{\omega}^n)]_{ij}-\lambda_2 [W_{1y}^{-1}D_y(\mathbf u^n\circ\boldsymbol{\omega}^n)]_{ij}.
\end{eqnarray}

With the same notation as in \cite{li2018high}, the weighted average in two dimensions can be denoted as 
\begin{equation}\label{avg-weight1}
    \bar{\mathbf{\omega}}=W_{1 x} W_{1 y} \mathbf{\omega}.
\end{equation}
Then the scheme \eqref{incompscheme1} is equivalent to
\begin{align}
\label{incompscheme2} 
\notag \bar{\omega}^{n+1}_{ij}&=\bar{\omega}^n_{ij}-\lambda_1[W_{1y}D_x(\mathbf u^n\circ\boldsymbol{\omega}^n)]_{ij}-\lambda_2 [W_{1x}D_y(\mathbf v^n\circ\boldsymbol{\omega}^n)]_{ij}\\
  &=\frac{1}{36}\begin{pmatrix}
                       1 & 4 & 1\\
                       4 & 16 & 4\\
                       1 & 4 & 1 \end{pmatrix}
                       : \Omega^n
 - \frac{\lambda_1}{12}\begin{pmatrix}
                       -1 & 0 & 1\\
                       -4 & 0 & 4\\
                       -1 & 0 & 1 \end{pmatrix}
                       : (U^n\circ\Omega^n)
 - \frac{\lambda_2}{12}\begin{pmatrix}
                       1 & 4 & 1\\
                       0 & 0 & 0\\
                       -1 & -4 & -1 \end{pmatrix}
                       : (V^n\circ\Omega^n),    
\end{align}
where $\circ$ denotes  the matrix Hadamard product, and
\[U=\begin{pmatrix*}[l]
  u_{i-1, j+1} &u_{i, j+1} &u_{i+1, j+1} \\
  u_{i-1, j} &u_{i, j} &u_{i+1, j} \\
  u_{i-1, j-1} &u_{i, j-1} &u_{i+1, j-1}
 \end{pmatrix*}, V=\begin{pmatrix*}[l]
  v_{i-1, j+1} &v_{i, j+1} &v_{i+1, j+1} \\
  v_{i-1, j} &v_{i, j} &v_{i+1, j} \\
  v_{i-1, j-1} &v_{i, j-1} &v_{i+1, j-1}
 \end{pmatrix*},\]
 \[\Omega=\begin{pmatrix*}[l]
  \omega_{i-1, j+1} &\omega_{i, j+1} &\omega_{i+1, j+1} \\
  \omega_{i-1, j} &\omega_{i, j} &\omega_{i+1, j} \\
  \omega_{i-1, j-1} &\omega_{i, j-1} &\omega_{i+1, j-1}
 \end{pmatrix*}.\]
It is straightforward to verify the {\it weak monotonicity}, i.e.,
$\bar{\omega}^{n+1}_{ij}$ is a monotonically increasing function with respect to all point values $\omega^n_{ij}$ involved in \eqref{incompscheme2}
under the CFL condition 
\[\frac{\Delta t}{\Delta x}\max_{ij} |u^n_{ij}|+\frac{\Delta t}{\Delta y}\max_{ij} |v^n_{ij}|\leq \frac13.\]
However,  the monotonicity is sufficient for bound-preserving $\bar \omega^{n+1}_{ij}\in[m, M]$, only if the following consistency condition holds:
\begin{equation}
\omega^n_{ij}\equiv m\Rightarrow \bar \omega^{n+1}_{ij}=m, \quad \omega^n_{ij}\equiv M\Rightarrow \bar \omega^{n+1}_{ij}=M.
\label{consistency}
\end{equation}
Plugging $\omega^n_{ij}\equiv m$ in \eqref{incompscheme2}, we get 
\begin{eqnarray*}
\begin{aligned}
\bar{\omega}^{n+1}_{ij} =&m\left(1-\lambda_1 \left(W_{1y}D_x\mathbf u^n\right)_{ij}-\lambda_2 \left(W_{1x}D_y\mathbf v^n\right)_{ij}\right).
\end{aligned}
\end{eqnarray*}
Thus the consistency \eqref{consistency} holds only if the velocity $\langle \mathbf u^n, \mathbf v^n\rangle$ satisfies:
\begin{equation}
\label{numdivefree}
\frac{1}{\Delta x}D_xW_{1y}\mathbf{u}^n+ \frac{1}{\Delta x}D_yW_{1x}\mathbf{v}^n = 0.
\end{equation}

Therefore we have the following bound-preserving result:
\begin{thm}
If the velocity $\langle \mathbf u^n, \mathbf v^n\rangle$ satisfies the discrete divergence free constraint \eqref{numdivefree} and $\omega^n_{ij}\in[m, M]$, 
then under the CFL constraint \[\frac{\Delta t}{\Delta x}\max_{ij} |u^n_{ij}|+\frac{\Delta t}{\Delta y}\max_{ij} |v^n_{ij}|\leq \frac13,\]
the scheme \eqref{incompscheme2} satisfies $\bar \omega^{n+1}_{ij}\in[m, M]$.
\end{thm}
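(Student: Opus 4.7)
The plan is to follow the classical monotonicity + consistency route — both ingredients are essentially assembled in the prose preceding the theorem, so the task is really to stitch them together and track signs. First I would expand \eqref{incompscheme2} as a linear combination
\[
\bar\omega^{n+1}_{ij}=\sum_{|k|,|l|\le 1} c^{ij}_{kl}\,\omega^n_{i+k,j+l},
\]
reading the coefficients $c^{ij}_{kl}$ directly off the three stencil matrices in \eqref{incompscheme2}. The center coefficient is the pure average weight $\tfrac{16}{36}$ (the convective terms contribute $0$ at the center), the four edge coefficients take the shape $\tfrac{4}{36}\pm\tfrac{\lambda_1}{3}u_{i\pm1,j}$ or $\tfrac{4}{36}\pm\tfrac{\lambda_2}{3}v_{i,j\pm1}$, and the four corner coefficients take the shape $\tfrac{1}{36}\pm\tfrac{\lambda_1}{12}u_{i\pm1,j\pm1}\pm\tfrac{\lambda_2}{12}v_{i\pm1,j\pm1}$, the signs being dictated by the corresponding entries of the stencil matrices.

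Next I would verify that the given CFL constraint makes every $c^{ij}_{kl}$ nonnegative. The corners are the tightest case: each corner coefficient is bounded below by
\[
\tfrac{1}{36}-\tfrac{1}{12}\bigl(\lambda_1|u_{i\pm1,j\pm1}|+\lambda_2|v_{i\pm1,j\pm1}|\bigr)\;\ge\;\tfrac{1}{36}-\tfrac{1}{12}\cdot\tfrac{1}{3}\;=\;0,
\]
and the edge coefficients are strictly easier. This is precisely the weak monotonicity already announced in the section. I would then invoke the consistency calculation already performed in the text: plugging $\omega^n\equiv m$ into \eqref{incompscheme2} gives
\[
\bar\omega^{n+1}_{ij}=m\bigl(1-\lambda_1[W_{1y}D_x\mathbf u^n]_{ij}-\lambda_2[W_{1x}D_y\mathbf v^n]_{ij}\bigr),
\]
and since $W_{1y}$ and $D_x$ act in orthogonal directions they commute (likewise $W_{1x}$ and $D_y$), so the bracket equals $\Delta t$ times the left-hand side of the discrete divergence-free identity \eqref{numdivefree} and therefore vanishes. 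Hence $\bar\omega^{n+1}_{ij}=m$, which in linear-combination language says $\sum_{k,l}c^{ij}_{kl}=1$. The case $\omega^n\equiv M$ is identical.

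The conclusion is immediate: under both the CFL condition and \eqref{numdivefree}, $\bar\omega^{n+1}_{ij}$ is a genuine convex combination of the nine surrounding values $\omega^n_{i+k,j+l}\in[m,M]$, so it lies in $[m,M]$ as well. I do not expect any real obstacle in this argument; the only place that requires care is pinning down the worst corner coefficient to confirm that the stated constant $\tfrac{1}{3}$ in the CFL bound is exactly what the sign analysis demands, and that the divergence-free relation \eqref{numdivefree} is used with the correct placement of $\Delta x$ versus $\Delta y$ to cancel the two convective contributions when $\omega^n$ is spatially constant.
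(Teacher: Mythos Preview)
Your proposal is correct and follows essentially the same approach as the paper: the paper establishes weak monotonicity under the CFL condition and then uses the discrete divergence-free constraint \eqref{numdivefree} to obtain the consistency \eqref{consistency}, which together yield that $\bar\omega^{n+1}_{ij}$ is a convex combination of the nine values $\omega^n_{i+k,j+l}\in[m,M]$. Your write-up simply makes the coefficient-by-coefficient sign check explicit (correctly identifying the corner coefficients as the tight case), whereas the paper leaves that verification as ``straightforward.''
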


\subsection{A discrete divergence free velocity field}
In the following discussion, we may discard the superscript $n$ for convenience assuming everything discussed is at time step $n$.

Note that \eqref{numdivefree} is a discrete divergence free constraint and we can construct a fourth-order accurate velocity field satisfying  \eqref{numdivefree}.  Given $\omega_{ij}$, we first compute $\psi_{ij}$ by a fourth-order compact finite difference scheme for the Poisson equation  \eqref{poisson1}. The detail of the Poisson solvers including the fast Poisson solver is given in the appendices.  

With the fourth-order compact finite difference we have
\begin{eqnarray}\label{reconstruct-uv}
- \frac{1}{\Delta y}D_y\mathbf{\Psi} = W_{1y}\mathbf{u},\quad  \frac{1}{\Delta x}D_x\mathbf{\Psi} = W_{1x} \mathbf{v},
\end{eqnarray}
where $$\mathbf{\Psi}=\begin{pmatrix}
              \psi_{11} & \psi_{12} & \cdots & \psi_{1, N_y}\\
              \psi_{21} & \psi_{22} & \cdots & \psi_{2, N_y} \\              
              \vdots &  \vdots & \ddots  & \vdots\\
              \psi_{N_x-1,1}& \psi_{N_x-1,2} &\cdots  & \psi_{N_x-1,N_y} \\
              \psi_{N_x,1}  & \psi_{N_x,2}&\cdots & \psi_{N_x,N_y}  \\
             \end{pmatrix}_{N_x\times N_y}.$$
Since the two finite difference operators $D_x$ and $D_y$ commute, it is straightforward to verify that the velocity field computed by \eqref{reconstruct-uv} satisfies \eqref{numdivefree}. 
\subsection{A fourth-order accurate bound-preserving scheme}

For the Euler equations \eqref{incompressibleEuler}, 
the following implementation of the fourth-order compact finite difference with forward Euler time discretization scheme can preserve the bounds:
\begin{enumerate}
 \item Given $\omega^n_{ij}\in [m, M]$, 
 solve the Poisson equation \eqref{poisson1} by the fourth-order accurate compact finite difference scheme to obtain point values of the stream function $\psi_{ij}$. 
 \item Construct $\mathbf{u}$ and $\mathbf{v}$ by \eqref{reconstruct-uv}.
 \item Obtain $\bar \omega^{n+1}_{ij}\in [m, M]$ by scheme \eqref{incompscheme2}. 
 \item Apply the limiting procedure in Section \ref{sec-3pt-bp-limiter} to obtain $\omega^{n+1}_{ij}\in [m, M]$.
\end{enumerate}
For high order SSP time discretizations, we should use the same implementation above for each time stage or time step. 

For the Navier-Stokes equations \eqref{incompressibleNS}, with $\mu_1=\frac{\Delta t}{\Delta x^2}$ and  $\mu_2=\frac{\Delta t}{\Delta y^2}$, the scheme can be written as
\begin{equation}\label{incompNSscheme1}
\begin{aligned}
\omega^{n+1}_{ij} = &\omega^n_{ij}-\lambda_1 [W_{1x}^{-1}D_x(\mathbf u^n\circ\boldsymbol{\omega}^n)]_{ij}
-\lambda_2 [W_{1y}^{-1}D_y(\mathbf v^n\circ\boldsymbol{\omega}^n)]_{ij}\\
&+\frac{\mu_1}{Re} W_{2x}^{-1}D_{xx} \omega^n_{ij}+\frac{\mu_2}{Re} W_{2y}^{-1}D_{yy} \omega^n_{ij},
\end{aligned}
\end{equation}

In a manner similar to \eqref{avg-weight1}, we define
\begin{equation}\label{avg-weight2}
    \tilde{\mathbf{\omega}}:=W_{2 x} W_{2 y} \mathbf{\omega},
\end{equation}
with $W_{1}:=W_{1 x} W_{1 y}$ and $W_{2}:=$ $W_{2 x} W_{2 y}$. Due to definition \eqref{avg-weight1} and the fact operators $W_{1}$ and $W_{2}$ commute, i.e. $W_{1} W_{2}=W_{2} W_{1}$, we have
$$
\tilde{\bar{\mathbf{\omega}}}=W_{2}W_{1}\mathbf{\omega}=W_{1}W_{2} \mathbf{\omega}=\bar{\tilde{\mathbf{\omega}}}.
$$

Then scheme \eqref{incompNSscheme1} is equivalent to 
\begin{equation}\label{scheme-ns}
\begin{aligned}
\tilde{\bar\omega}^{n+1}_{ij} = & \tilde{\bar\omega}^n_{ij}-\frac{\lambda_1}{12}[W_{2}W_{1y}Dx(\mathbf u^n\circ\boldsymbol{\omega}^n)]_{ij}
 - \frac{\lambda_2}{12}[W_{2}W_{1x}Dy(\mathbf u^n\circ\boldsymbol{\omega}^n)]_{ij} \\
+ & \frac{\mu_1}{Re} W_{1}W_{2y}D_{xx} \omega^n_{ij}+\frac{\mu_2}{Re} W_{1}W_{2x}D_{yy} \omega^n_{ij}.
\end{aligned}
\end{equation}

Following the discussion in Section \ref{sec-euler} and the discussion for the two-dimensional convection-diffusion in \cite{li2018high}, 
we have the following result:
\begin{thm}
If the velocity $\langle \mathbf u^n,\mathbf v^n\rangle$ satisfies the constraint \eqref{numdivefree} and $\omega^n_{ij}\in[m, M]$, 
then under the CFL constraint 
$$\frac{\Delta t}{\Delta x}\max_{ij} |u^n_{ij}|+\frac{\Delta t}{\Delta y}\max_{ij} |v^n_{ij}|\leq \frac16,\quad 
\frac{\Delta t}{Re \Delta x^2}+\frac{\Delta t}{Re \Delta y^2} \leq \frac{5}{24},$$
the scheme \eqref{scheme-ns} satisfies  $\tilde{\bar\omega}^{n+1}_{ij}\in[m, M]$. 
\end{thm}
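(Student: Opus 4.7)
The plan is to extend the weak-monotonicity argument for the Euler scheme in Section~\ref{sec-euler} and the 2D convection-diffusion analysis of \cite{li2018high} to the present scheme \eqref{scheme-ns}. Specifically, I would write $\tilde{\bar\omega}^{n+1}_{ij}$ as an explicit linear combination of the point values $\omega^n_{i+p,\,j+q}$ over the $5\times 5$ stencil $(p,q)\in\{-2,-1,0,1,2\}^2$, then establish (a) \emph{weak monotonicity}, i.e.\ non-negativity of every coefficient in this combination under the stated CFL, and (b) \emph{consistency}, i.e.\ exact preservation of the constant states $\omega^n\equiv m$ and $\omega^n\equiv M$. Exactly as in the Euler case, (a) and (b) together with $\omega^n_{ij}\in[m,M]$ force $\tilde{\bar\omega}^{n+1}_{ij}\in[m,M]$.

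For step~(a), I would exploit the tensor-product structure of the composite operators in \eqref{scheme-ns}: because all one-dimensional operators in different directions commute, each composite operator decomposes as a product of an $x$-stencil and a $y$-stencil. The relevant 1D stencils are $W_1W_2=\tfrac{1}{72}(1,14,42,14,1)$, which is entry-wise positive, and $W_1D_{xx}=\tfrac{1}{6}(1,2,-6,2,1)$, whose only negative entry is the center. Hence the only negative contributions to the coefficient of $\omega^n_{i+p,j+q}$ come from (i) the center entry of $W_{1x}D_{xx}$ in the $x$-diffusion term when $p=0$, (ii) the center entry of $W_{1y}D_{yy}$ in the $y$-diffusion term when $q=0$, and (iii) the two convection contributions, whose signs depend on the local values of $u^n, v^n$ and whose magnitudes are controlled by $\lambda_1\max_{ij}|u^n_{ij}|$ and $\lambda_2\max_{ij}|v^n_{ij}|$. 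A node-by-node inspection of the $25$ stencil positions, in the same spirit as the 2D scalar analysis in \cite{li2018high}, then pins down the two CFL constants: the convection CFL is halved from $\tfrac13$ (Euler case) to $\tfrac16$ in order to leave a portion of the averaging budget to absorb the negative contributions from the diffusion stencils, which in turn determine the sharp diffusion bound $\tfrac{5}{24}$.

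For step~(b), substituting $\omega^n\equiv m$ into \eqref{scheme-ns} kills the two diffusion terms since $D_{xx}$ and $D_{yy}$ annihilate constants, leaving
\[
\tilde{\bar\omega}^{n+1}_{ij}=m-\tfrac{\lambda_1 m}{12}\bigl[W_2 W_{1y} D_x \mathbf u^n\bigr]_{ij}-\tfrac{\lambda_2 m}{12}\bigl[W_2 W_{1x} D_y \mathbf v^n\bigr]_{ij}.
\]
Factoring out the (commuting) operator $W_2=W_{2x}W_{2y}$, the two convection contributions combine to a constant multiple of $W_{2x}W_{2y}\bigl[\tfrac{1}{\Delta x}D_xW_{1y}\mathbf u^n+\tfrac{1}{\Delta y}D_yW_{1x}\mathbf v^n\bigr]_{ij}$, which vanishes identically by the discrete divergence-free hypothesis \eqref{numdivefree}. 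Hence $\tilde{\bar\omega}^{n+1}_{ij}=m$, and the symmetric computation gives $M$. Together with step~(a) this shows the $5\times 5$ coefficients sum to $1$ and are non-negative, so $\tilde{\bar\omega}^{n+1}_{ij}\in[m,M]$. The main obstacle is the $25$-position case check in step~(a): locating the stencil entry at which the constants become sharp and verifying that the negative diffusion and convection contributions both fit inside the positive $W_1W_2$ averaging budget is delicate, though the underlying bookkeeping is essentially that already carried out for scalar 2D convection-diffusion in \cite{li2018high}.
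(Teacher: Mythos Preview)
Your proposal is correct and matches the paper's approach: the paper does not give an explicit proof of this theorem but simply states that it follows from the Euler-case discussion in Section~\ref{sec-euler} together with the two-dimensional convection-diffusion analysis in \cite{li2018high}, which is precisely the weak-monotonicity-plus-consistency argument you outline. Your identification of the relevant one-dimensional stencils $W_1W_2=\tfrac{1}{72}(1,14,42,14,1)$ and $W_1D_{xx}=\tfrac{1}{6}(1,2,-6,2,1)$, the use of the discrete divergence-free constraint \eqref{numdivefree} for consistency, and the heuristic for why the convection CFL is halved to accommodate the diffusion budget are all in line with the intended argument.
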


Given $\tilde{\bar \omega}_{ij}$, we can recover point values $\omega_{ij}$ by obtaining first $\tilde \omega_{ij}=W_{1}^{-1}\tilde{\bar \omega}_{ij}$
then $\omega_{ij}=W_{2}^{-1} \tilde \omega_{ij}$.
Given point values $\omega_{ij}$ satisfying $\tilde{\bar \omega}_{ij} \in[m,M]$ for any $i$ and $j$, 
we can use the limiter in Algorithm \ref{bp-limiter} in a dimension by dimension fashion several times to enforce $\omega_{ij}\in[m, M]$:
\begin{enumerate}
 \item Given  $\tilde{\bar \omega}_{ij}\in [m, M]$, compute $\tilde \omega_{ij}=W_{1}^{-1}\tilde{\bar \omega}_{ij}$ and
 apply the limiting Algorithm \ref{bp-limiter} with $a=4$ in both $x$-direction and $y$-direction to ensure  $\tilde \omega_{ij}\in [m, M]$.
 \item  Given  $\bar{\omega}_{ij}\in [m, M]$, compute $\omega_{ij}=W_{2}^{-1}\tilde{\omega}_{ij}$ and
 apply the limiting algorithm Algorithm \ref{bp-limiter} with $a=10$ in both $x$-direction and $y$-direction to ensure $\omega_{ij}\in [m, M]$.
 \end{enumerate}

\section{A TVB limiter for the two-dimensional incompressible flow}
\label{sec-tvb-limiter}
To have nonlinear stability and eliminate oscillations for shocks, a TVBM (total variation bounded in the means) limiter was introduced
for the compact finite difference scheme solving scalar convection equations in \cite{cockburn1994nonlinearly}.
In this section, we will modify this limiter  for the incompressible flow so that it does not affect the bound-preserving property.
 Thus we can use both the TVB limiter and the bound-preserving limiter in Algorithm \ref{bp-limiter} to preserve bounds while reducing oscillations. For simplicity, we only consider the numerical scheme for the incompressible Euler equations \eqref{incompressibleEuler}. In this section, we may discard the superscript $n$ if a variable is defined at time step $n$.

\subsection{The TVB limiter}\label{sec-tvb-limiter-def}
The scheme \eqref{incompscheme2}  can be written in a conservative form:
\begin{equation}\label{incomp-conservationform}
 \bar{\omega}_{ij}^{n+1}=\bar{\omega}_{ij}^n-\lambda_1[\hat{(u\omega)}^n_{i+\frac{1}{2},j}-\hat{(u\omega)}^n_{i-\frac{1}{2},j}]-\lambda_2[\hat{(v\omega)}^n_{i,j+\frac{1}{2}}-\hat{(v\omega)}^n_{i,j-\frac{1}{2}}],
\end{equation}
involving a numerical flux $\hat{(u\omega)}^n_{i+\frac{1}{2},j}$ and $\hat{(v\omega)}^n_{i,j+\frac{1}{2}}$ as local functions of $u^n_{kl}$,  $v^n_{kl}$ and $\omega^n_{kl}$.  The numerical flux is defined as
\begin{equation}\label{flux}
\begin{aligned}
\hat{(u\omega)}_{i+\frac{1}{2},j} = \frac{1}{2}\left([W_{1y}(\mathbf{u} \circ\boldsymbol{\omega})]_{ij}+[W_{1y}(\mathbf{u} \circ\boldsymbol{\omega})]_{i+1,j}\right),\\
\hat{(v\omega)}_{i,j+\frac{1}{2}} = \frac{1}{2}\left([W_{1x}(\mathbf v\circ\boldsymbol{\omega})]_{ij}+[W_{1x}(\mathbf{v} \circ\boldsymbol{\omega})]_{i,j+1}\right).
\end{aligned}
\end{equation}
Similarly we denote
\begin{equation}\label{uv-flux}
\begin{aligned}
\hat{u}_{i+\frac{1}{2},j} = \frac{1}{2}\left(\left(W_{1y}\mathbf{u}\right)_{ij}+\left(W_{1y}\mathbf{u}\right)_{i+1,j}\right),\\
\hat{v}_{i,j+\frac{1}{2}} = \frac{1}{2}\left(\left(W_{1x}\mathbf{v}\right)_{ij}+\left(W_{1x}\mathbf{v}\right)_{i,j+1}\right).
\end{aligned}
\end{equation}

The limiting is defined in a dimension by dimension manner.
For the flux splitting, it is done as in one-dimension.  
Consider a splitting of $u$ satisfying
 \begin{equation}\label{flux-splitting}
u^{+}\geq 0,  \quad u^{-}\leq 0.
 \end{equation}
 The simplest such  splitting is the Lax-Friedrichs splitting
$$u^{\pm}=\frac12(u\pm\alpha ),\quad \alpha=\max\limits_{(x,y)\in \Omega}|u(x,y)|.$$
 
Then we have 
\begin{equation*}
u=u^{+}+u^{-}, \quad u\omega=u^{+}\omega+u^{-}\omega,
\end{equation*}
and we  write the flux $\hat{(u\omega)}_{i+\frac{1}{2},j} $ and $\hat{u}_{i+\frac{1}{2},j} $ as 
\begin{equation*}
\hat{(u\omega)}_{i+\frac{1}{2},j} =\hat{(u\omega)}^+_{i+\frac{1}{2},j} +\hat{(u\omega)}^-_{i+\frac{1}{2},j}, \quad 
\hat{u}_{i+\frac{1}{2},j}  = \hat{u}^+_{i+\frac{1}{2},j} + \hat{u}^-_{i+\frac{1}{2},j} 
\end{equation*}
where $\hat{(u\omega)}^{\pm}_{i+\frac{1}{2},j}$ and $\hat{u}^{\pm}_{i+\frac{1}{2},j} $ are obtained by adding superscripts $\pm$ to $u_{ij}$ in \eqref{flux} and \eqref{uv-flux} respectively, i.e. 
\begin{align*}
\hat{(u\omega)}^{\pm}_{i+\frac{1}{2},j} = & \frac{1}{2}\left([W_{1y}(\mathbf{u}^{\pm} \circ\boldsymbol{\omega})]_{ij}+[W_{1y}(\mathbf{u}^{\pm} \circ\boldsymbol{\omega})]_{i+1,j}\right), \\
\hat{u}^{\pm}_{i+\frac{1}{2},j} = & \frac{1}{2}\left(\left(W_{1y}\mathbf{u}^{\pm}\right)_{ij}+\left(W_{1y}\mathbf{u}^{\pm}\right)_{i+1,j}\right),
\end{align*}
where $\mathbf{u}^{\pm} = (u^{\pm}_{ij})$. With a dummy index $j$ referring $y$ value, we first take the differences between the high-order numerical flux and the first-order upwind flux
\begin{equation}
d \hat{(u\omega)}_{i+\frac{1}{2}, j}^{+}=\hat{(u\omega)}_{i+\frac{1}{2}, j}^{+}-u_{i+\frac{1}{2}, j}^{+}\bar{\omega}_{i j} , \quad d \hat{(u\omega)}_{i+\frac{1}{2}, j}^{-}=u^{-}_{i+\frac{1}{2}, j}\bar{\omega}_{i+1, j}-\hat{(u\omega)}_{i+\frac{1}{2}, j}^{-}.
\end{equation}

Limit them by 
\begin{equation}\label{tvb-limiter}
\begin{aligned}
d \hat{(u\omega)}_{i+\frac{1}{2}, j}^{+(m)}\, = \,& m\left(d \hat{(u\omega)}_{i+\frac{1}{2}, j}^{+}, \,u_{i+\frac{1}{2}, j}^{+}\Delta_{+}^{x} \bar{\omega}_{i j}, \,u_{i-\frac{1}{2}, j}^{+}\Delta_{+}^{x} \bar{\omega}_{i-1, j}\right), \\
d \hat{(u\omega)}_{i+\frac{1}{2}, j}^{-(m)}\, = \,& m\left(d \hat{(u\omega)}_{i+\frac{1}{2}, j}^{-}, \,u_{i+\frac{1}{2}, j}^{-}\Delta_{+}^{x} \bar{\omega}_{i j}, \,u_{i+\frac{3}{2}, j}^{-}\Delta_{+}^{x} \bar{\omega}_{i+1, j}\right),
\end{aligned}
\end{equation}
where $\Delta_{+}^{x} v_{i j} \equiv v_{i+1, j}-v_{i j}$ is the forward difference operator in the $x$ direction,  and $m$ is the standard $minmod$ function 
\begin{equation}
\label{standard-minmod}
m(a_1,\dots,a_k)=\left\{\begin{array}{ll}
 s \min_{1\leq i\leq k}|a_i|, & \textrm{if $sign(a_1)=\cdots=sign(a_k)=s$,}\\
 0, & \textrm{otherwise.}
\end{array}\right.
\end{equation}

As mentioned in \cite{cockburn1994nonlinearly},  the limiting defined in \eqref{tvb-limiter} maintains the formal accuracy of the compact schemes in smooth regions of the solution with the assumption
\begin{equation}\label{avg-asmp}
\bar{\omega}_{ij} = (W_{1x}W_{1y}\mathbf{\omega})_{ij} = \omega_{ij}+\mathcal O\left(\Delta x^{2}\right) \text{ for } \omega \in C^2.
\end{equation} 
Under the assumption \eqref{avg-asmp}, by simple Taylor expansion, 
\begin{equation}\label{tvb-limiter-te}
\begin{aligned}
d \hat{(u\omega)}_{i+\frac{1}{2}, j}^{\pm} = & \frac{1}{2}u_{i+\frac{1}{2}, j}^{\pm}\omega_{x,ij}\Delta x+\mathcal O\left(\Delta x^{2}\right), \\
\,u_{k+\frac{1}{2}, j}^{\pm}\Delta_{+}^{x} \bar{\omega}_{k j}= & u_{i+\frac{1}{2}, j}^{\pm}\omega_{x,ij}\Delta x+\mathcal O\left(\Delta x^{2}\right),\quad k =i-1,i, i+1.
\end{aligned}
\end{equation}
Hence in smooth regions away from critical points of $\omega$,  for sufficiently small $\Delta x$, the minmod function \eqref{standard-minmod} will pick the first argument, yielding
$$
d \hat{(u\omega)}_{i+\frac{1}{2}, j}^{\pm(m)} = d \hat{(u\omega)}_{i+\frac{1}{2}, j}^{\pm}.
$$
Since the accuracy may degenerate to first-order at critical points,  as a remedy,  the modified $minmod$ function \cite{shu1987tvb,  cockburn1989tvb} is introduced
\begin{equation}
\label{minmod}
\tilde{m}(a_1,\dots,a_k)=\left\{\begin{array}{ll}
 a_1, & \textrm{if $|a_1|\leq P\Delta x^2$,}\\
 m(a_1,\dots,a_k), & \textrm{otherwise,}
\end{array}\right.
\end{equation}
where $P$ is a positive constant independent of $\Delta x$ and $m$ is the standard $minmod$ function \eqref{standard-minmod}. See more detailed discussion in  \cite{cockburn1994nonlinearly}.

Then we obtain the limited numerical fluxes as
\begin{equation}\label{modified-flux}
\hat{(u\omega)}_{i+\frac{1}{2}, j}^{+(m)} = u_{i+\frac{1}{2}, j}^{+}\bar{\omega}_{i j} + d \hat{(u\omega)}_{i+\frac{1}{2}, j}^{+(m)}, \quad 
\hat{(u\omega)}_{i+\frac{1}{2}, j}^{-(m)} = u^{-}_{i+\frac{1}{2}, j}\bar{\omega}_{i+1, j} - d \hat{(u\omega)}_{i+\frac{1}{2}, j}^{-(m)}.
\end{equation}
and
\begin{equation}\label{modified-flux-1}
\hat{(u\omega)}_{i+\frac{1}{2}, j}^{(m)} = \hat{(u\omega)}_{i+\frac{1}{2}, j}^{+(m)} + \hat{(u\omega)}_{i+\frac{1}{2}, j}^{-(m)} 
\end{equation}
The flux in the y-direction can be defined analogously.

The following result was proven in \cite{cockburn1994nonlinearly}:
\begin{lemma}
  For any $n$ and $\Delta t$ such that $0\leq n\Delta t\leq T$, scheme \eqref{incomp-conservationform} with flux \eqref{modified-flux-1} satisfies a maximum principle in the means:
$$
\max _{i, j}\left|\bar{\omega}_{i j}^{n+1}\right| \leq \max _{i, j}\left|\bar{\omega}_{i j}^{n}\right|
$$
under the CFL condition
$$
\left[\max \left(u^{+}\right)+\max \left(-u^{-}\right)\right] \frac{\Delta t}{\Delta x}+\left[\max \left(v^{+}\right)+\max \left(-v^{-}\right)\right] \frac{\Delta t}{\Delta y} \leq \frac{1}{2}
$$
where the maximum is taken in $\min_{i, j} u_{i j}^{n} \leq u \leq \max_{i, j} u_{i j}^{n}$, $\min_{i, j} v_{i j}^{n} \leq v \leq \max_{i, j} v_{i j}^{n}$.
 
\end{lemma}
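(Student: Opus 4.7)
The plan is to adapt the monotonicity argument of Cockburn and Shu \cite{cockburn1994nonlinearly} to the present setting: rewrite the one-step update of scheme \eqref{incomp-conservationform} with limited fluxes \eqref{modified-flux-1} as a convex combination of the previous-step values $\{\bar\omega^n_{k,l}\}$. Once that representation is in hand, taking absolute values inside the combination yields $\max_{ij}|\bar\omega^{n+1}_{ij}| \leq \max_{ij}|\bar\omega^n_{ij}|$, and iterating in $n$ gives the full claim for $0 \leq n\Delta t \leq T$.

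The key algebraic tool is the standard minmod identity: whenever the arguments share a sign, $m(a_1,a_2,a_3) = \theta \cdot a_k$ for some $\theta \in [0,1]$ and any chosen index $k$ (with $\theta = 0$ if the signs disagree or $a_k = 0$). First I would apply this to both $d\hat{(u\omega)}^{+(m)}_{i+1/2, j}$ and $d\hat{(u\omega)}^{-(m)}_{i+1/2, j}$, writing each as a non-negative scalar times a forward difference $u^\pm \Delta_+^x \bar\omega$. Substituting into \eqref{modified-flux} then decomposes each limited flux into the pure upwind flux $u^{\pm}_{i+1/2,j}\bar\omega^n_{\star,j}$ plus a small correction of the same algebraic form. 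Next I would substitute into \eqref{incomp-conservationform}, telescope the flux differences, and collect the coefficient of each $\bar\omega^n_{k,l}$ in the resulting linear expression.

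The verification then splits into two pieces. For non-negativity of coefficients, the diagonal coefficient multiplying $\bar\omega^n_{ij}$ is bounded below by $1 - \lambda_1[\max(u^+) + \max(-u^-)] - \lambda_2[\max(v^+) + \max(-v^-)]$, which is non-negative precisely under the stated CFL condition; the off-diagonal coefficients, being products of $u^{\pm}$ or $v^{\pm}$ with the $[0,1]$-scalars from the minmod representations, are manifestly non-negative by construction. For the sum-to-one property, I would check that the scheme preserves constants: on a constant vorticity field every minmod argument vanishes, so the corrections disappear and the upwind flux differences telescope to zero by virtue of the discrete divergence-free constraint \eqref{numdivefree} applied to $\langle \mathbf u, \mathbf v\rangle$.

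The main obstacle is the bookkeeping. The limiter enlarges the effective stencil to at least five points in each direction, so the corrections $d\hat{(u\omega)}^{\pm(m)}_{i\pm 1/2, j}$ and their $y$-analogs must be tracked simultaneously, and the decision of which argument to isolate inside each minmod has to be made consistently so that every collected off-diagonal coefficient has the correct sign. The factor $\tfrac{1}{2}$ in the CFL bound, rather than the $1$ one would get from pure upwinding, reflects the worst case in which the limited correction doubles the effective upwind contribution when $\theta=1$; this is exactly the balance the Cockburn--Shu representation was engineered to achieve, and the argument transfers to the present scheme as long as \eqref{numdivefree} holds.
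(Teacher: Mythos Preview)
The paper does not supply its own proof of this lemma; it simply attributes the result to Cockburn and Shu \cite{cockburn1994nonlinearly} and moves on. Your proposal is precisely a reconstruction of that Cockburn--Shu argument (write each limited correction as a $[0,1]$-multiple of an appropriate minmod argument, collect coefficients, verify non-negativity under the CFL bound and constant preservation via the discrete divergence-free identity), and it is correct in outline.

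One small quantitative slip: the diagonal coefficient of $\bar\omega^n_{ij}$ is bounded below by $1 - 2\lambda_1[\max(u^+)+\max(-u^-)] - 2\lambda_2[\max(v^+)+\max(-v^-)]$, not by the expression without the factors of $2$; the doubling is exactly the $(1+\theta_1-\theta_2)\in[0,2]$ effect you describe in your final paragraph, and it is what makes the CFL constant $\tfrac12$ rather than $1$. Your conclusion is unaffected since you already account for this in the CFL discussion. You are also right that the discrete divergence-free constraint \eqref{numdivefree} is essential for the coefficients to sum to one; the lemma statement omits it, but it is implicit in the incompressible-flow setting of \cite{cockburn1994nonlinearly} and of the present paper.
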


\subsection{The bound-preserving property of the nonlinear scheme with modified flux}

 The compact finite difference scheme with the TVB limiter in the last section is 
 \begin{equation}\label{nonlinear-scheme}
  \bar{\omega}_{ij}^{n+1}=\bar{\omega}_{ij}^{n}-\lambda_1 \left( \hat{(u\omega)}_{i+\frac{1}{2}, j}^{(m)} -\hat{(u\omega)}_{i-\frac{1}{2}, j}^{(m)} \right) - \lambda_2 \left( \hat{(v\omega)}_{i, j+\frac{1}{2}}^{(m)} -\hat{(v\omega)}_{i, j-\frac{1}{2}}^{(m)} \right),
 \end{equation}
where the numerical flux $\hat{(u\omega)}_{i+\frac{1}{2}, j}^{(m)}$, $\hat{(u\omega)}_{i, j+\frac{1}{2}}^{(m)}$ is the modified flux approximating \eqref{flux}.

\begin{thm}\label{tvb-avg-bp}
 If $\omega^n_{ij}\in[m, M]$,  under the CFL condition
 \begin{equation}\label{cfl-cond}
\lambda_1 \max _{i,j}\left|u^{(\pm)}_{ij}\right| \leq \frac{1}{24}, \quad \lambda_2 \max _{i,j}\left|v^{(\pm)}_{ij}\right| \leq \frac{1}{24},
\end{equation}
 the nonlinear scheme \eqref{nonlinear-scheme} satisfies
 \[  \bar{\omega}_{ij}^{n+1} \in \left[m, M\right].\]
 \end{thm}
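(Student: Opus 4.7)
The plan is to deduce the bound-preserving property $\bar\omega_{ij}^{n+1}\in[m,M]$ from the $L^\infty$-style maximum principle in the Lemma above by a translation argument. Because that Lemma only delivers $\max_{i,j}|\bar\omega^{n+1}_{ij}|\leq \max_{i,j}|\bar\omega^n_{ij}|$, it yields one-sided bounds directly only when the data are symmetric about zero. To handle an arbitrary interval $[m,M]$, I would shift by the constant $c=(m+M)/2$ so that $\omega^n_{ij}-c$ lies in $[-(M-m)/2,(M-m)/2]$, apply the Lemma to the shifted data, and shift back.

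The crux is verifying that the TVB-limited scheme \eqref{nonlinear-scheme} commutes with constant shifts of $\omega$, i.e., replacing $\omega^n$ by $\omega^n-c$ produces $\bar\omega^{n+1}-c$. I would check each ingredient in turn. The numerical fluxes $\hat{(u\omega)}_{i+1/2,j}$ in \eqref{flux} and $\hat u_{i+1/2,j}$ in \eqref{uv-flux} are linear in $\omega$, so $\hat{(u(\omega-c))}_{i+1/2,j}=\hat{(u\omega)}_{i+1/2,j}-c\hat u_{i+1/2,j}$. The corrections $d\hat{(u\omega)}^{\pm}_{i+1/2,j}$ are invariant under the shift, since the $-c\hat u^\pm$ contributions from $\hat{(u\omega)}^\pm$ and from $\hat u^\pm\bar\omega$ cancel. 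The minmod arguments $\hat u^\pm_{k\pm 1/2,j}\Delta_+^x\bar\omega_{kj}$ appearing in \eqref{tvb-limiter} depend on $\bar\omega$ only through first differences and are likewise invariant. Hence the modified corrections $d\hat{(u\omega)}^{\pm(m)}_{i+1/2,j}$ are invariant, and the modified flux \eqref{modified-flux-1} transforms as $\hat{(u\omega)}^{(m)}_{i+1/2,j}\mapsto \hat{(u\omega)}^{(m)}_{i+1/2,j}-c\hat u_{i+1/2,j}$. The resulting extra shift contributions in the update \eqref{nonlinear-scheme} amount to $-c\bigl[\lambda_1(\hat u_{i+1/2,j}-\hat u_{i-1/2,j})+\lambda_2(\hat v_{i,j+1/2}-\hat v_{i,j-1/2})\bigr]$, which vanishes by the discrete divergence-free condition \eqref{numdivefree}.

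With shift invariance established, I would apply the Lemma to the shifted data. Since $W_{1x}$ and $W_{1y}$ have nonnegative entries summing to one, $\omega^n_{ij}\in[m,M]$ implies $\bar\omega^n_{ij}\in[m,M]$, hence $\max_{i,j}|\bar\omega^n_{ij}-c|\leq (M-m)/2$. The CFL in \eqref{cfl-cond} implies the Lemma's CFL, because $\max(u^+)+\max(-u^-)\leq 2\max_{i,j}|u^{(\pm)}_{ij}|$ and similarly for $v$, so the left-hand side of the Lemma's condition is bounded by $4\cdot\tfrac{1}{24}=\tfrac{1}{6}\leq\tfrac{1}{2}$. Applying the Lemma to the shifted data then gives $\max_{i,j}|\bar\omega^{n+1}_{ij}-c|\leq (M-m)/2$, which is exactly $\bar\omega^{n+1}_{ij}\in[m,M]$.

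The only subtle step is tracking shift invariance through the nonlinear minmod-based TVB modification; this reduces cleanly to the observation that every minmod argument in \eqref{tvb-limiter} depends on $\omega$ only through first differences of $\bar\omega$. No delicate coefficient analysis of the scheme itself is needed beyond confirming that the stated CFL is strong enough to invoke the preceding Lemma.
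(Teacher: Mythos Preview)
Your approach is correct but takes a genuinely different route from the paper's own proof. The paper argues directly: it writes $\bar\omega^{n+1}_{ij}$ as an average of eight terms of the form $\bar\omega^n_{ij}\pm 8\lambda_1\hat{(u\omega)}^{\pm(m)}_{i\pm 1/2,j}$ and $\bar\omega^n_{ij}\pm 8\lambda_2\hat{(v\omega)}^{\pm(m)}_{i,j\pm 1/2}$, and uses the minmod structure to show that each limited flux lies between the unlimited compact flux and the first-order upwind flux. Under the CFL $\lambda_1\max|u^{(\pm)}|\le \tfrac{1}{24}$, each of these eight expressions is a monotonically increasing function of the point values $\omega^n_{kl}$, so each lies in an explicit interval obtained by plugging in $\omega^n\equiv m$ and $\omega^n\equiv M$. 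Summing the eight intervals and invoking the discrete divergence-free identity \eqref{numdivefree} collapses the endpoints to $m$ and $M$.

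By contrast, you treat the cited Lemma as a black box and reduce the two-sided bound $[m,M]$ to the one-sided $L^\infty$ estimate of that Lemma via the shift $\omega\mapsto\omega-\tfrac{m+M}{2}$, after verifying that the TVB-limited scheme commutes with constant shifts of $\omega$ (with $u,v$ held fixed); this is precisely where you use the divergence-free condition. This is legitimate and more economical. What the paper's approach buys is self-containment and a transparent explanation of \emph{why} the constant $\tfrac{1}{24}$ appears: it is exactly the threshold at which the eight split terms remain monotone in each $\omega^n_{kl}$. What your approach buys is brevity---you only need the Lemma's CFL, which is strictly weaker than \eqref{cfl-cond}---but the monotonicity analysis is delegated to the reference. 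Note that the Lemma itself already relies on the discrete divergence-free hypothesis (otherwise a constant state would not be preserved), so you should flag that this hypothesis is in force when you invoke it.
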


\begin{proof}
We have
\begin{equation} \label{4terms}
\begin{aligned}
  & \bar{\omega}_{ij}^{n+1}   =  \bar{\omega}_{ij}^{n}-\lambda_1 \left( \hat{(u\omega)}_{i+\frac{1}{2}, j}^{(m)} -\hat{(u\omega)}_{i-\frac{1}{2}, j}^{(m)} \right) - \lambda_2 \left( \hat{(v\omega)}_{i, j+\frac{1}{2}}^{(m)} -\hat{(v\omega)}_{i, j-\frac{1}{2}}^{(m)} \right) \\
  = & \frac18\left((\bar{\omega}^n_{ij}-8\lambda_1\hat{(u\omega)}_{i+\frac{1}{2}, j}^{+(m)})+(\bar{\omega}^n_{ij}-8\lambda_1\hat{(u\omega)}_{i+\frac{1}{2}, j}^{-(m)})  +
(\bar{\omega}^n_{ij}+8\lambda_1\hat{(u\omega)}_{i-\frac{1}{2}, j}^{+(m)}) + 
(\bar{\omega}^n_{ij}+8\lambda_1\hat{(u\omega)}_{i-\frac{1}{2}, j}^{-(m)})\right.\\
+ &\left.(\bar{\omega}^n_{ij}-8\lambda_2\hat{(v\omega)}_{i, j+\frac{1}{2}}^{+(m)})+
(\bar{\omega}^n_{ij}-8\lambda_2\hat{(v\omega)}_{i, j+\frac{1}{2}}^{-(m)})  +
(\bar{\omega}^n_{ij}+8\lambda_2\hat{(v\omega)}_{i, j-\frac{1}{2}}^{+(m)}) + 
(\bar{\omega}^n_{ij}+8\lambda_2\hat{(v\omega)}_{i, j-\frac{1}{2}}^{-(m)})\right).
\end{aligned}
\end{equation} 

Under the CFL condition \eqref{cfl-cond}, we will prove that the eight terms satisfy the following bounds
\begin{equation}\label{bp-result}
  \begin{aligned}
  & \bar{\omega}^n_{ij}-8\lambda_1\hat{(u\omega)}_{i+\frac{1}{2}, j}^{+(m)}\in\left[m-8\lambda_1\hat{u}_{i+\frac{1}{2}, j}^{+}m,  M-8\lambda_1\hat{u}_{i+\frac{1}{2}, j}^{+}M\right],\\
  &\bar{\omega}^n_{ij}-8\lambda_1\hat{(u\omega)}_{i+\frac{1}{2}, j}^{-(m)}\in\left[m-8\lambda_1\hat{u}_{i+\frac{1}{2}, j}^{-}m,  M-8\lambda_1\hat{u}_{i+\frac{1}{2}, j}^{-}M\right],\\
  &\bar{\omega}^n_{ij}+8\lambda_1\hat{(u\omega)}_{i-\frac{1}{2}, j}^{+(m)}\in\left[m+8\lambda_1\hat{u}_{i-\frac{1}{2}, j}^{+}m,  M+8\lambda_1\hat{u}_{i-\frac{1}{2}, j}^{+}M\right],\\
    & \bar{\omega}^n_{ij}+8\lambda_1\hat{(u\omega)}_{i-\frac{1}{2}, j}^{-(m)}\in \left[m+8\lambda_1\hat{u}_{i-\frac{1}{2}, j}^{-}m,  M+8\lambda_1\hat{u}_{i-\frac{1}{2}, j}^{-}M\right],\\
    & \bar{\omega}^n_{ij}-8\lambda_2\hat{(v\omega)}_{i, j+\frac{1}{2}}^{+(m)}\in\left[m-8\lambda_2\hat{v}_{i, j+\frac{1}{2}}^{+}m,  M-8\lambda_2\hat{v}_{i, j+\frac{1}{2}}^{+}M\right],\\
    & \bar{\omega}^n_{ij}-8\lambda_2\hat{(v\omega)}_{i, j+\frac{1}{2}}^{-(m)}\in\left[m-8\lambda_2\hat{v}_{i, j+\frac{1}{2}}^{-}m,  M-8\lambda_2\hat{v}_{i, j+\frac{1}{2}}^{-}M\right],\\
    & \bar{\omega}^n_{ij}+8\lambda_2\hat{(v\omega)}_{i, j-\frac{1}{2}}^{+(m)}\in\left[m+8\lambda_2\hat{v}_{i, j-\frac{1}{2}}^{+}m,  M+8\lambda_2\hat{v}_{i, j-\frac{1}{2}}^{+}M\right],\\
    & \bar{\omega}^n_{ij}+8\lambda_2\hat{(v\omega)}_{i, j-\frac{1}{2}}^{-(m)}\in\left[m+8\lambda_2\hat{v}_{i, j-\frac{1}{2}}^{-}m,  M+8\lambda_2\hat{v}_{i, j-\frac{1}{2}}^{-}M\right].
   \end{aligned}
   \end{equation}
   
For \eqref{bp-result},  by taking the sum of the lower bounds and upper bounds and multiplying them by $\frac{1}{8}$, we obtain
\begin{equation}
\bar{\omega}^{n+1}_{ij} \in \left[ m - mO_{ij},  M - MO_{ij} \right],
\end{equation}
  with 
  \begin{equation}
  \begin{aligned}
  O_{ij} = &  \lambda_1(\hat{u}_{i+\frac{1}{2}, j} - \hat{u}_{i-\frac{1}{2}, j}) - \lambda_2(\hat{u}_{i, j+\frac{1}{2}} - \hat{u}_{i, j-\frac{1}{2}}) \\
  = &  \frac{\lambda_1}{2}\left((W_{1y}\mathbf{u})_{i+1,j} - (W_{1y}\mathbf{u})_{i-1,j})\right) +  \frac{\lambda_2}{2}\left((W_{1y}\mathbf{v})_{i,j+1} - (W_{1y}\mathbf{v})_{i,j-1})\right) \\
  = & \frac{\Delta t}{2}(D_{x}W_{1y}\mathbf{u} + D_{y}W_{1x}\mathbf{v}) =  0.
  \end{aligned}
  \end{equation}
  Therefore, we conclude $\bar{\omega}^{n+1}_{ij} \in \left[m, M \right]$.

  We only discuss the first two term in \eqref{bp-result} since the proof for the rest is similar. By the definition of the modified $minmod$ function \eqref{minmod} and \eqref{modified-flux},  we have
\begin{equation}
\label{flux-range}
\begin{aligned}
\hat{(u\omega)}_{i+\frac{1}{2}, j}^{+(m)} \in & \left[\min\{\hat{(u\omega)}_{i+\frac{1}{2}, j}^{+}, u_{i+\frac{1}{2}, j}^{+}\bar{\omega}_{i j}\} , \max\{\hat{(u\omega)}_{i+\frac{1}{2}, j}^{+}, u_{i+\frac{1}{2}, j}^{+}\bar{\omega}_{i j}\}\right], \\
\hat{(u\omega)}_{i+\frac{1}{2}, j}^{-(m)} \in & \left[\min\{\hat{(u\omega)}_{i+\frac{1}{2}, j}^{-}, u_{i+\frac{1}{2}, j}^{-}\bar{\omega}_{i+1, j}\} , \max\{\hat{(u\omega)}_{i+\frac{1}{2}, j}^{-}, u_{i+\frac{1}{2}, j}^{-}\bar{\omega}_{i+1,j}\}\right].
\end{aligned}
\end{equation}

We notice that under CFL condition \eqref{cfl-cond}, 
\begin{equation}
\bar{\omega}^n_{ij}-8\lambda_1\hat{(u\omega)}_{i+\frac{1}{2}, j}^{+}, \quad \bar{\omega}^n_{ij}-8\lambda_1u_{i+\frac{1}{2}, j}^{+}\bar{\omega}_{i j}^n,\quad
\bar{\omega}^n_{ij}-8\lambda_1\hat{(u\omega)}_{i+\frac{1}{2}, j}^{-}
\end{equation}
are all monotonically increasing functions with respect to variables $\omega^n_{kj}$, $k = i-1,i,i+1$.  Due to the flux splitting \eqref{flux-splitting},
\begin{equation}
 \bar{\omega}^n_{ij}-8\lambda_1u_{i+\frac{1}{2}, j}^{-}\bar{\omega}_{i+1, j}^n
\end{equation}
is also a monotonically increasing function with respect to variables $\omega^n_{kj}$, $k = i-1,i,i+1, i+2$. 
Therefore, with the assumption $\omega^n_{ij} \in [m, M]$, we obtain
\begin{equation}\label{lubound-range}
\begin{aligned}
\bar{\omega}^n_{ij}-8\lambda_1\hat{(u\omega)}_{i+\frac{1}{2}, j}^{+}, \quad \bar{\omega}^n_{ij}-8\lambda_1u_{i+\frac{1}{2}, j}^{+}\bar{\omega}_{i j}^n \in \left[m-8\lambda_1\hat{u}_{i+\frac{1}{2}, j}^{+}m,  M-8\lambda_1\hat{u}_{i+\frac{1}{2}, j}^{+}M\right],\\
\bar{\omega}^n_{ij}-8\lambda_1\hat{(u\omega)}_{i+\frac{1}{2}, j}^{-}, \quad \bar{\omega}^n_{ij}-8\lambda_1u_{i+\frac{1}{2}, j}^{-}\bar{\omega}_{i+1, j}^n \in \left[m-8\lambda_1\hat{u}_{i+\frac{1}{2}, j}^{-}m,  M-8\lambda_1\hat{u}_{i+\frac{1}{2}, j}^{-}M\right],
\end{aligned}
\end{equation}
with \eqref{flux-range} ,  which implies the first two terms of \eqref{bp-result}.
\end{proof}
\begin{rmk}\label{rmk-tvb}
We remark here the above proof is independent of the second and third arguments of the $minmod$ function \eqref{minmod}. Therefore,  the proof hold for other limiters with different second and third arguments in the same minmod function \eqref{minmod}.
\end{rmk}
\begin{rmk}
The TVB limiter in this paper is designed to modify
 the convection flux only thus it also applies to the  Navier-Stokes equation. Moreover, under suitable CFL condition, the full scheme with TVB limiter can still preserve $\tilde{\bar{\omega}}_{ij}^{n+1} \in[m,M]$ with $\omega_{ij}^{n} \in[m,M]$.
\end{rmk}

\subsection{An alternative TVB limiter}\label{sec-tvb-limiter-def-2}
Another TVB limiter can be defined by replacing \eqref{tvb-limiter} with
\begin{equation}\label{tvb-limiter-1}
\begin{aligned}
d \hat{(u\omega)}_{i+\frac{1}{2}, j}^{+(m)}\, = \,& m\left(d \hat{(u\omega)}_{i+\frac{1}{2}, j}^{+}, \, \Delta_x^+(u_{i+\frac{1}{2}, j}^{+}\bar{\omega}_{i j}), \, \Delta_x^+(u_{i-\frac{1}{2}, j}^{+}\bar{\omega}_{i-1, j})\right), \\
d \hat{(u\omega)}_{i+\frac{1}{2}, j}^{-(m)}\, = \,& m\left(d \hat{(u\omega)}_{i+\frac{1}{2}, j}^{-}, \,\Delta_x^+(u^{-}_{i-\frac{1}{2}, j}\bar{\omega}_{i, j}),\,\Delta_x^+(u^{-}_{i+\frac{1}{2}, j}\bar{\omega}_{i+1, j})\right).
\end{aligned}
\end{equation}
All the other procedures in the limiter are exactly the same as in Section \ref{sec-tvb-limiter-def}. 
The limiter does not affect the bound-preserving property due to the arguments in Remark \ref{rmk-tvb}.

\section{Numerical Tests}
\label{sec-numeric-test}
 In this subsection, we test the fourth-order compact finite difference scheme with both the bound-preserving and the TVB limiter for the two-dimensional incompressible flow.

In the numerical test, we refer to the bound-preserving limiter as BP,  the TVB limiter in Section \ref{sec-tvb-limiter-def} as TVB1, and the TVB limiter in section \ref{sec-tvb-limiter-def-2} as TVB2. The parameter in the minmod function used in TVB limiters is denoted as P. 
In all the following numerical tests, we use SSPRK(3,3) as mentioned in section \ref{sec-time-dis}.
\subsection{Accuracy Test}
For the Euler Equation \eqref{incompressibleEuler} with periodic boundary condition and initial data $\omega(x, y, 0)=-2 \sin (2x) \sin (y)$ on the domain $[0,2 \pi] \times[0,2 \pi]$,  the exact solution is $\omega(x, y, t)=-2 \sin (2x) \sin (y)$.  
We test the accuracy of the proposed scheme on this solution. 
The errors for $P=300$ are given in Table \ref{acc-test-euler-bp-tvb}, and 
we observe the designed order of accuracy for this special steady state solution. 
\begin{table}[htbp]
\centering
\caption{Incompressible Euler equations.  Fourth-order compact FD for vorticity,  $t = 0.5$. With BP and TVB1 limiters, P =300.}
\label{acc-test-euler-bp-tvb}
  \begin{tabular}{|c|cc|cc|}
\hline  $N\times N$ &  $L^2$ error  &  order & $L^\infty$ error & order  \\
\hline
  $32\times 32$  & 3.16E-3 &  - & 1.00E-3 & -\\ 
 \hline
  $64\times 64$  & 1.86E-4 & 4.09 & 5.90E-5 & 4.09\\
\hline
  $128\times 128$  & 1.14E-5 & 4.02 & 3.63E-6 & 4.02\\
\hline
  $256\times 256$ & 7.13E-7 & 4.01 & 2.67E-7 & 4.00\\
\hline
\end{tabular}
\end{table}




\subsection{Double Shear Layer Problem}
We test the scheme for the double shear layer problem on the domain $[0, 2\pi]\times[0, 2\pi]$ with a periodic boundary condition.
The initial condition is
$$\omega(x,y,0)=\left\{\begin{array}{ll}
 \delta cos(x)-\frac{1}{\rho}sech^2((y-\frac{\pi}{2})/\rho), \,y\leq \pi\\
 \delta cos(x)+\frac{1}{\rho}sech^2((\frac{3\pi}{2}-y)/\rho), y>\pi
\end{array}\right.$$
with $\delta = 0.05$ and $\rho = \pi/15$. The vorticity $\omega$ at time $T = 6$ and $T= 8$ are shown in Figure \ref{doubleshearlayer1}, Figure \ref{doubleshearlayer2} and
Figure \ref{doubleshearlayer3}. 
With both the bound-preserving limiter and TVB limiter,
the numerical solutions are ensured to be in the range $[-\delta-\frac{1}{\rho},\delta+\frac{1}{\rho}]$.
The TVB limiter can also reduce oscillations. 


\begin{figure}[ht]
\begin{subfigure}{.49\textwidth}
\includegraphics[scale=0.42]{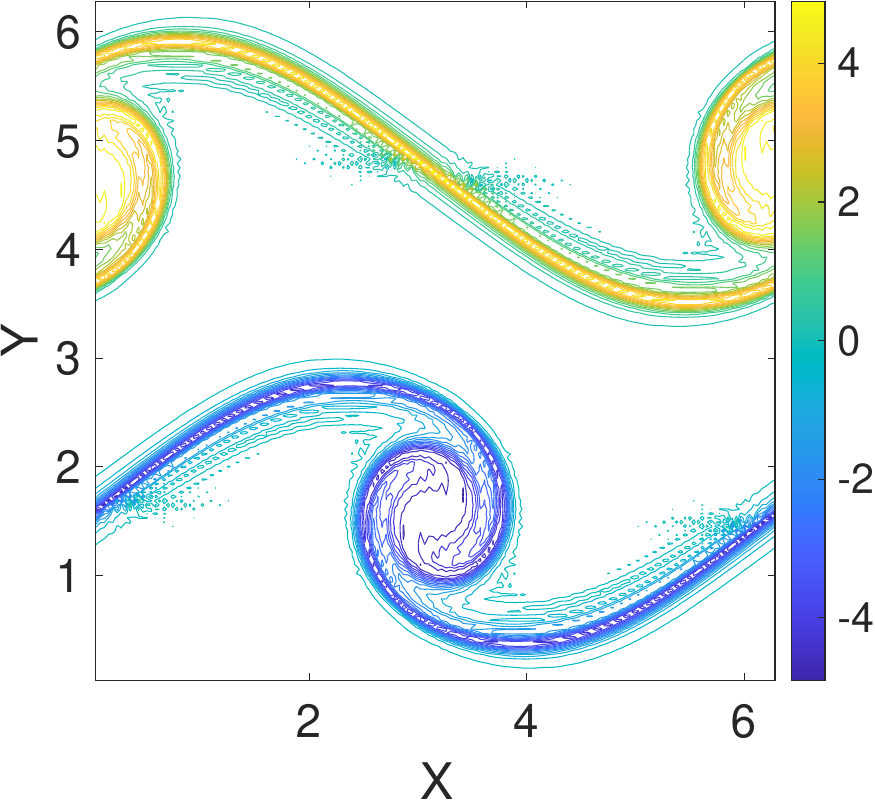}
\caption{$T=6$, with no limiter.}
\end{subfigure}
\begin{subfigure}{.49\textwidth}
\includegraphics[scale=0.42]{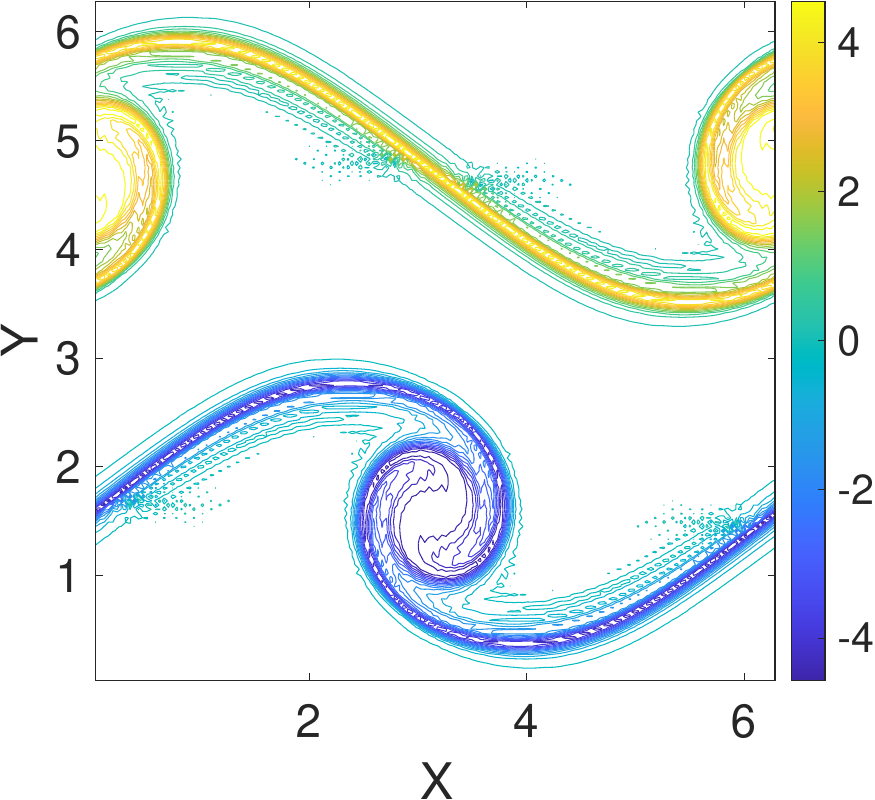}
\caption{$T=6$, with only BP.}
\end{subfigure}

\begin{subfigure}{.49\textwidth}
\includegraphics[scale=0.42]{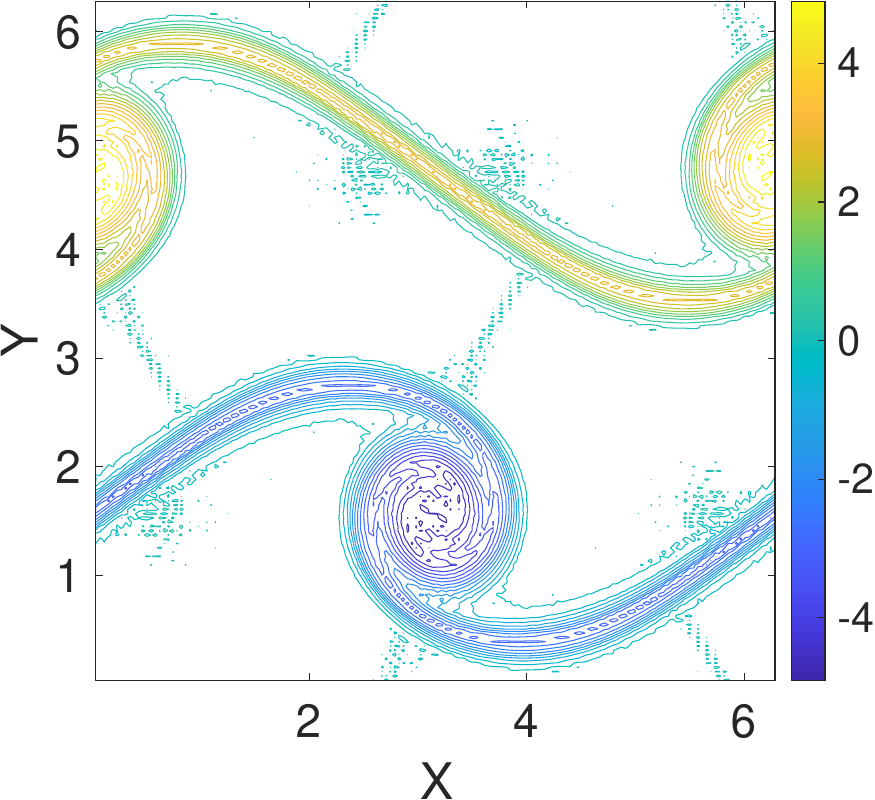}
\caption{$T=6$, with TVB1, P=100.}
\end{subfigure}
\begin{subfigure}{.49\textwidth}
\includegraphics[scale=0.42]{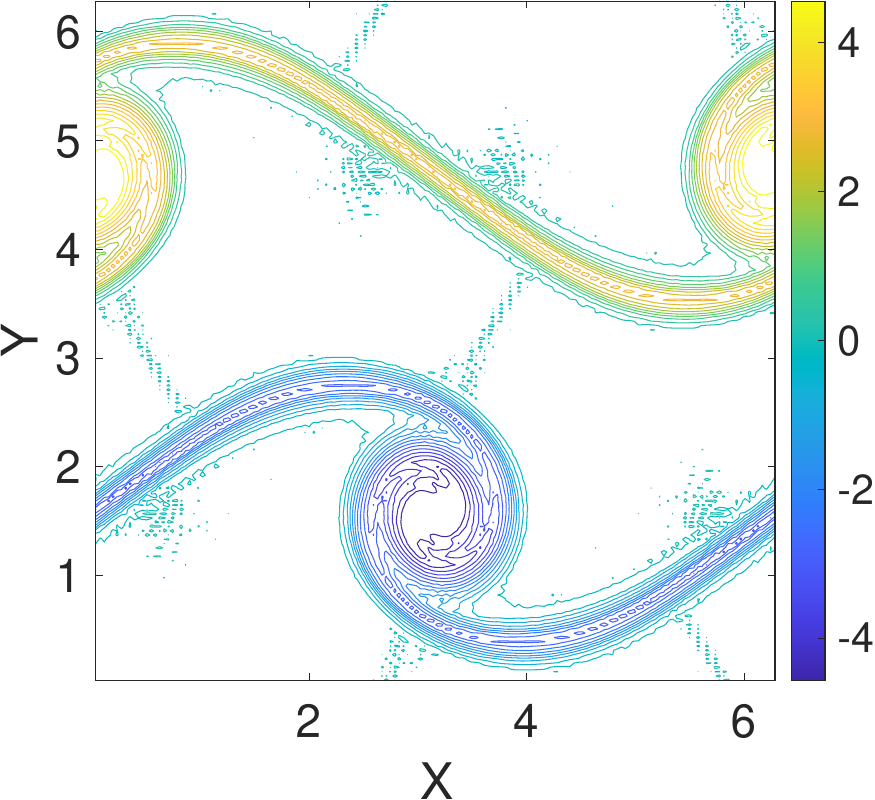}
\caption{$T=6$, with BP and TVB1, P=100.}
\end{subfigure}

\begin{subfigure}{.49\textwidth}
\includegraphics[scale=0.42]{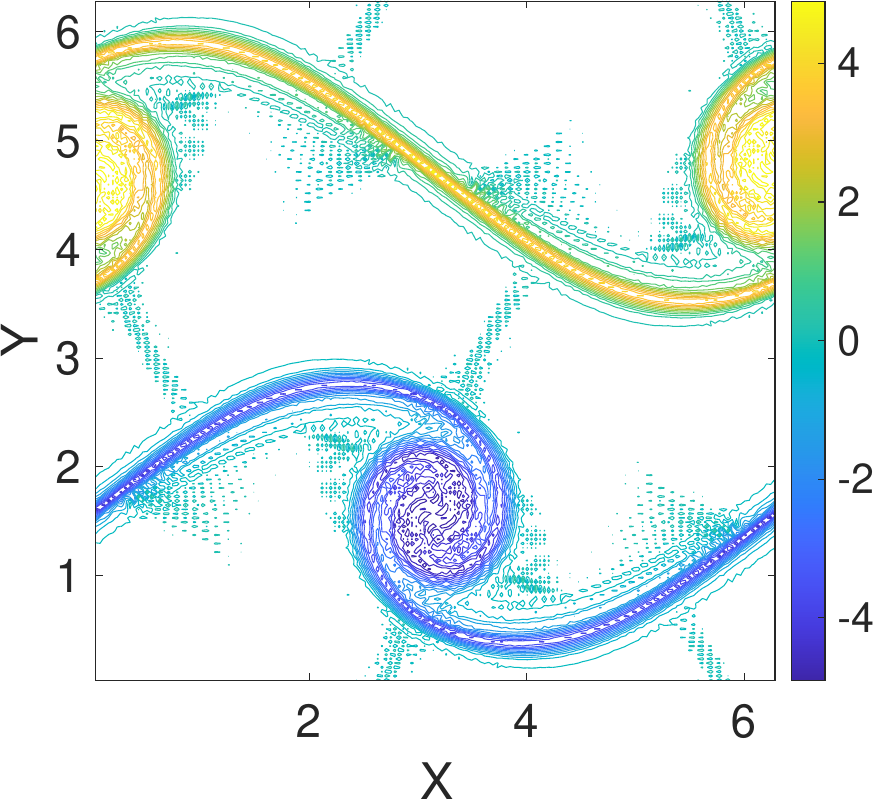}
\caption{$T=6$, with TVB1, P=300.}
\end{subfigure}
\begin{subfigure}{.49\textwidth}
\includegraphics[scale=0.42]{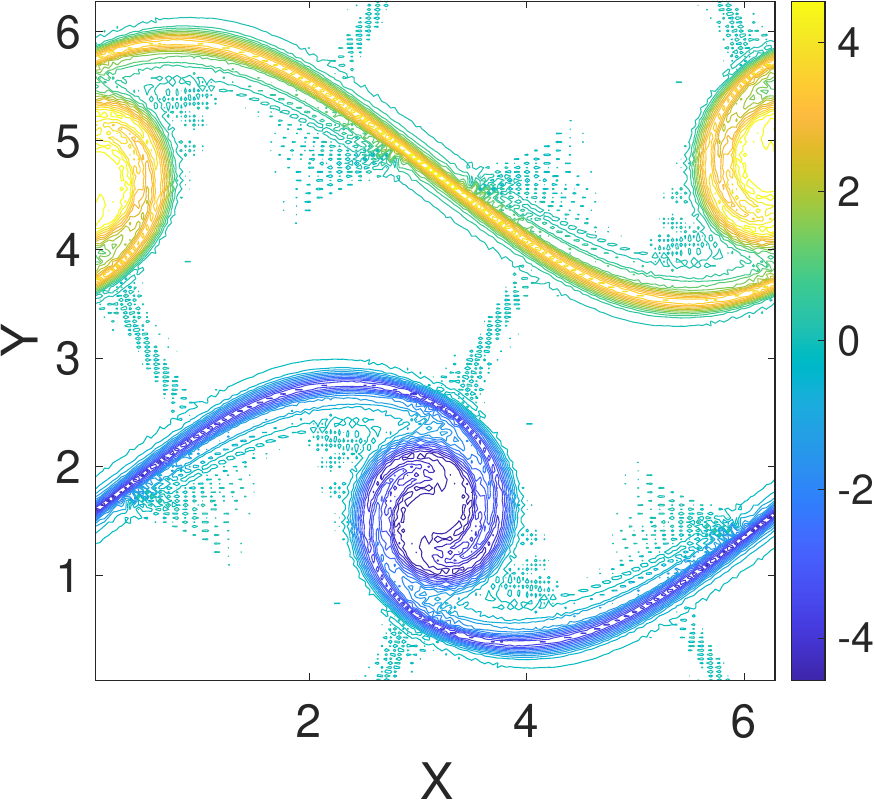}
\caption{$T=6$, with BP and TVB1, P=300.}
\end{subfigure}
\caption{Double shear layer problem. Fourth-order compact finite difference with SSP Runge–Kutta method on a $160\times 160$ mesh 
solving the incompressible Euler equation \eqref{incompressibleEuler}  at $T=6$. The time step is $\Delta t=\frac{1}{24\max_x |\mathbf u_0|}\Delta x$.}
\label{doubleshearlayer1}
\end{figure}

\begin{figure}[ht]
\begin{subfigure}{.49\textwidth}
\includegraphics[scale=0.42]{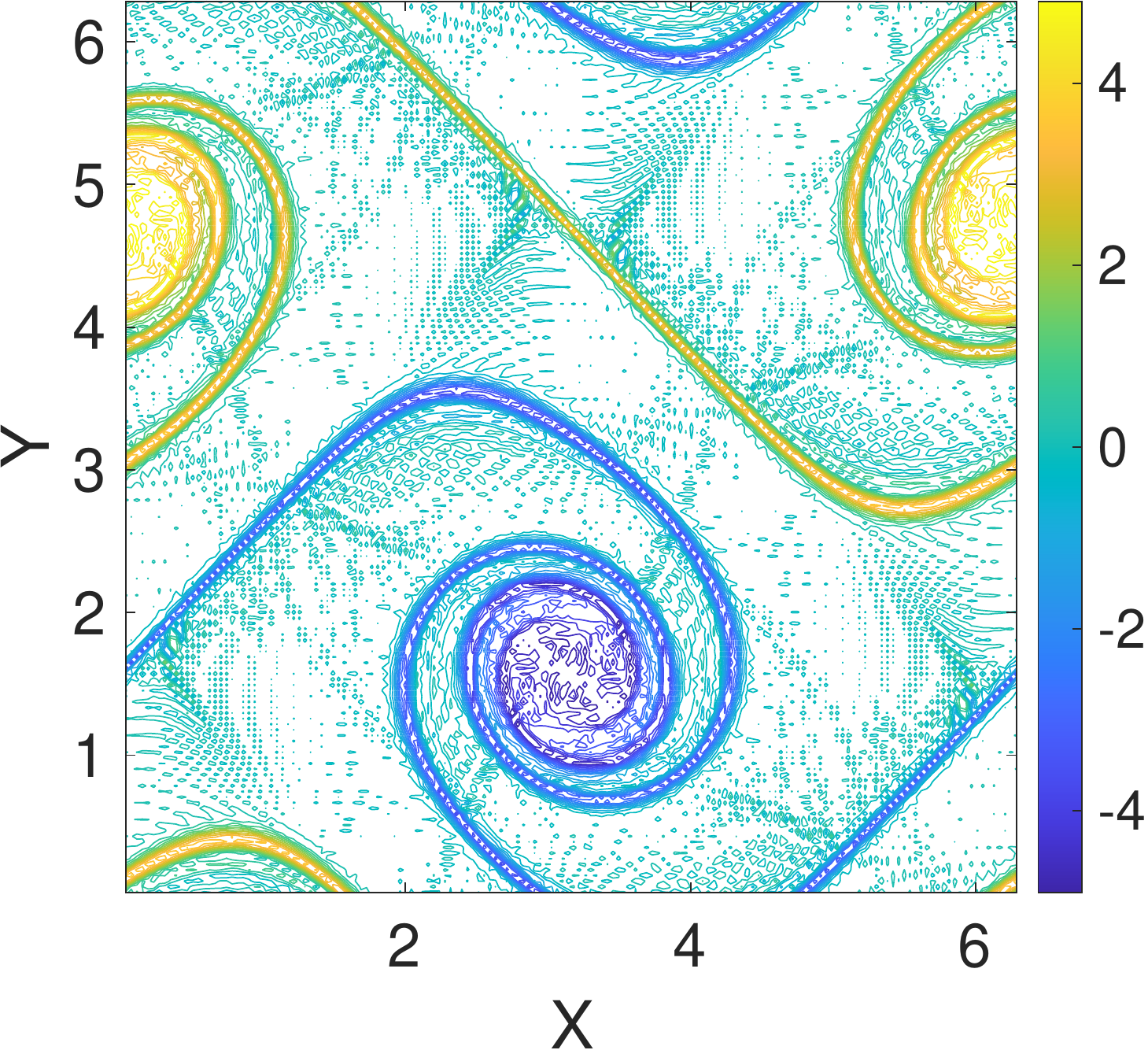}
\caption{$T=8$, with no limiter.}
\end{subfigure}
\begin{subfigure}{.49\textwidth}
\includegraphics[scale=0.42]{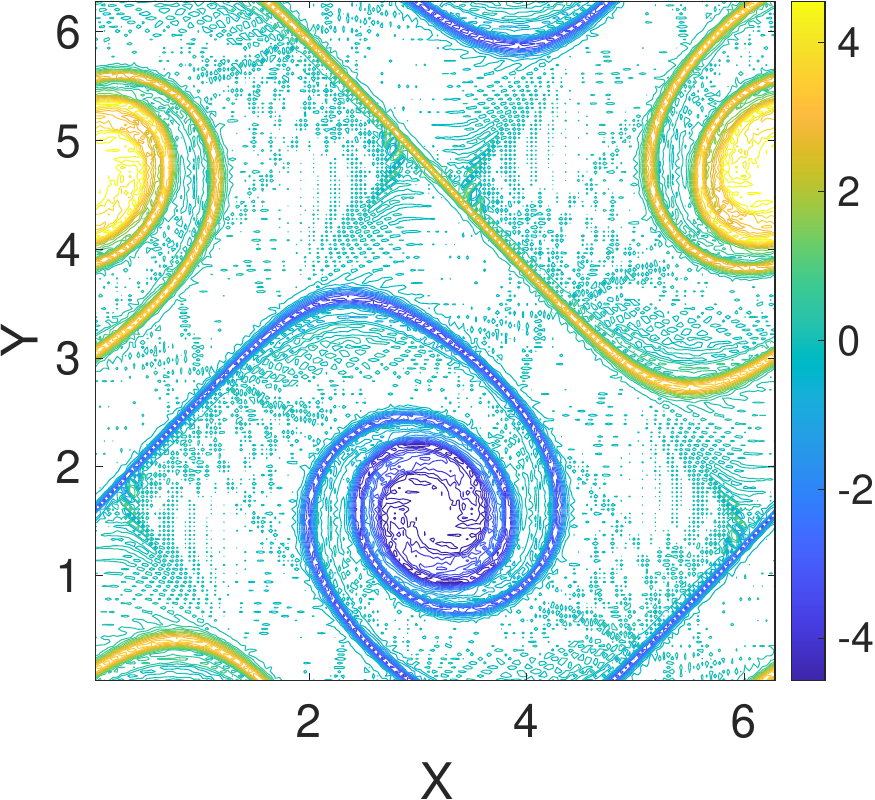}
\caption{$T=8$, with BP.}
\end{subfigure}

\begin{subfigure}{.49\textwidth}
\includegraphics[scale=0.42]{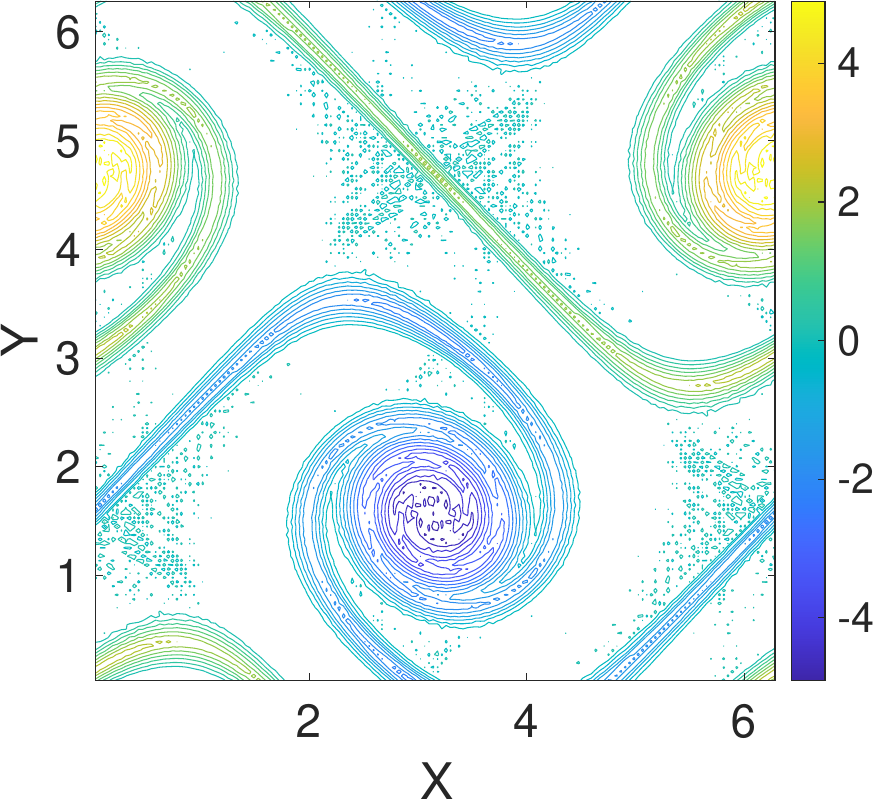}
\caption{$T=8$, with TVB1, P=100.}
\end{subfigure}
\begin{subfigure}{.49\textwidth}
\includegraphics[scale=0.42]{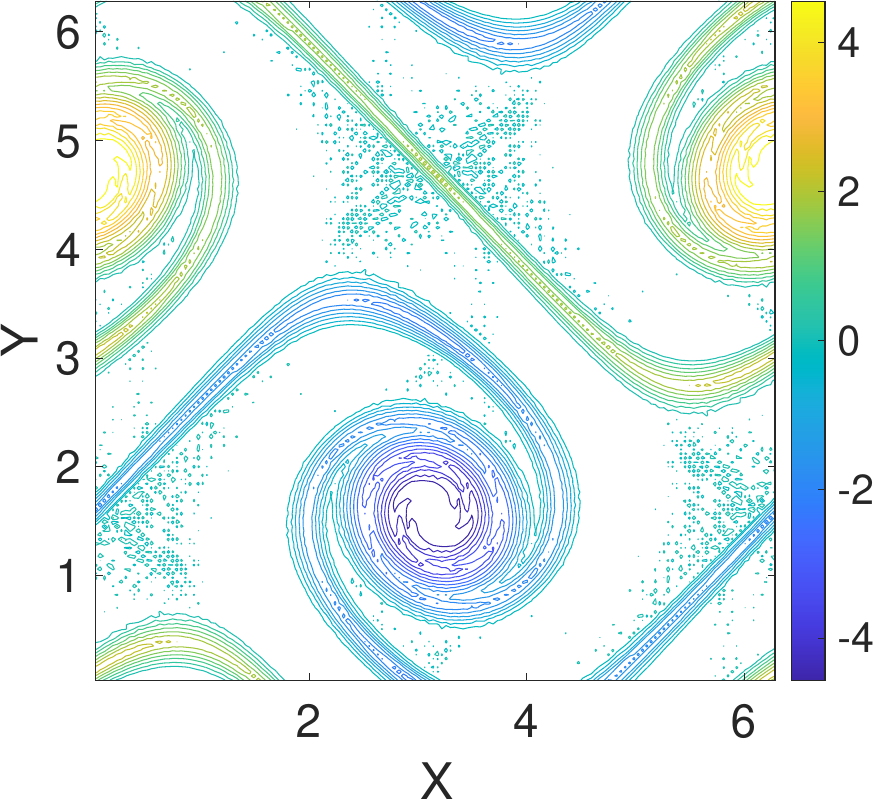}
\caption{$T=8$, with BP and TVB1, P=100.}
\end{subfigure}

\begin{subfigure}{.49\textwidth}
\includegraphics[scale=0.42]{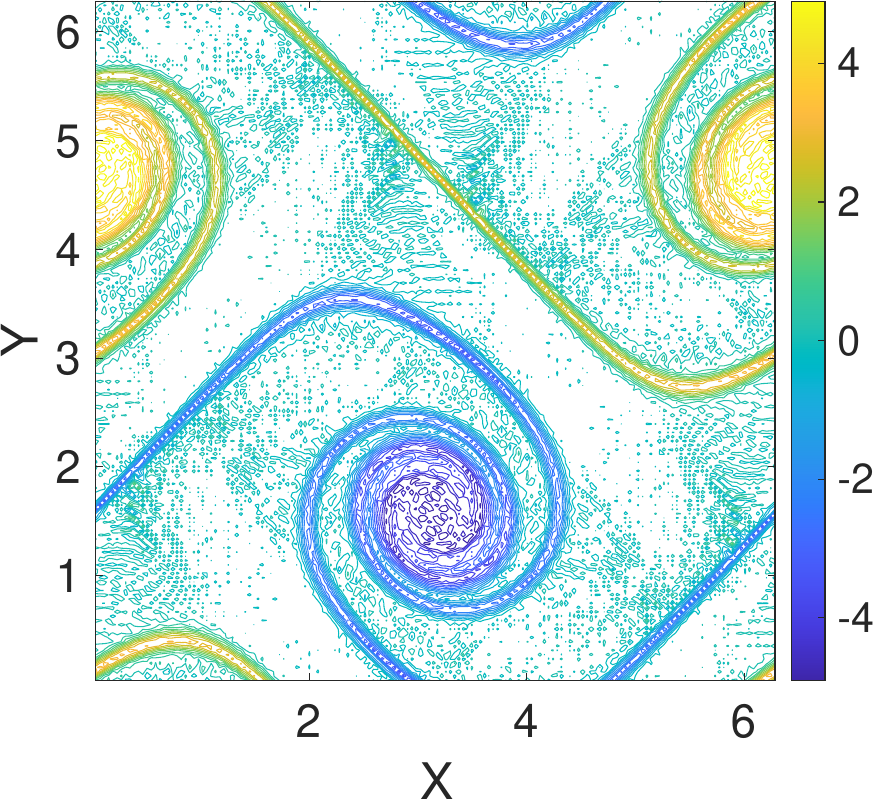}
\caption{$T=8$, with TVB1, P=300.}
\end{subfigure}
\begin{subfigure}{.49\textwidth}
\includegraphics[scale=0.42]{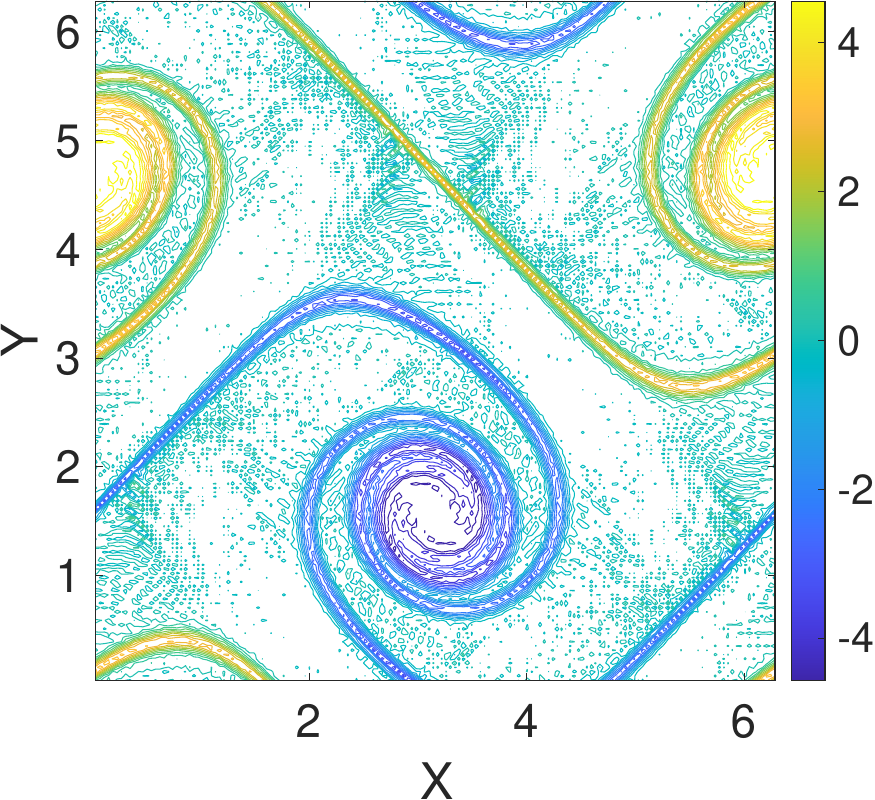}
\caption{$T=8$, with BP and TVB1, P=300.}
\end{subfigure}
\caption{Double shear layer problem. Fourth-order compact finite difference with SSP Runge–Kutta method on a $160\times 160$ mesh 
solving the incompressible Euler equation \eqref{incompressibleEuler}  at $T=8$. The time step is $\Delta t=\frac{1}{24\max_x |\mathbf u_0|}\Delta x$. }
\label{doubleshearlayer2}
\end{figure}

\begin{figure}[ht]
\begin{subfigure}{.49\textwidth}
\includegraphics[scale=0.42]{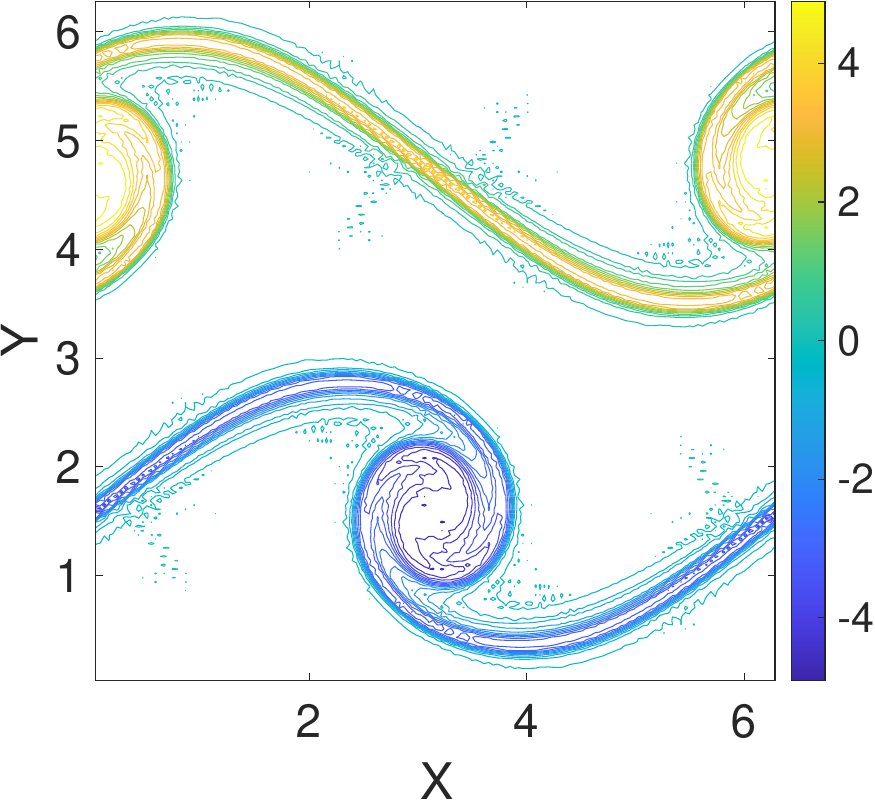}
\caption{$T=6$, without TVB2.}
\end{subfigure}
\begin{subfigure}{.49\textwidth}
\includegraphics[scale=0.42]{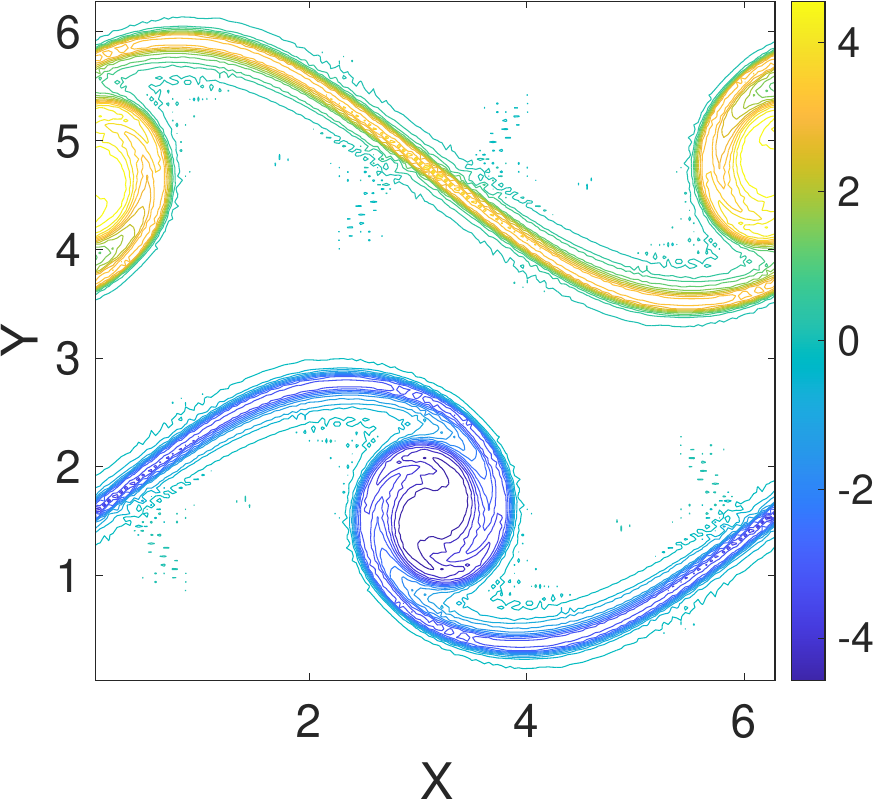}
\caption{$T=6$, with BP and TVB2.}
\end{subfigure}

\begin{subfigure}{.49\textwidth}
\includegraphics[scale=0.42]{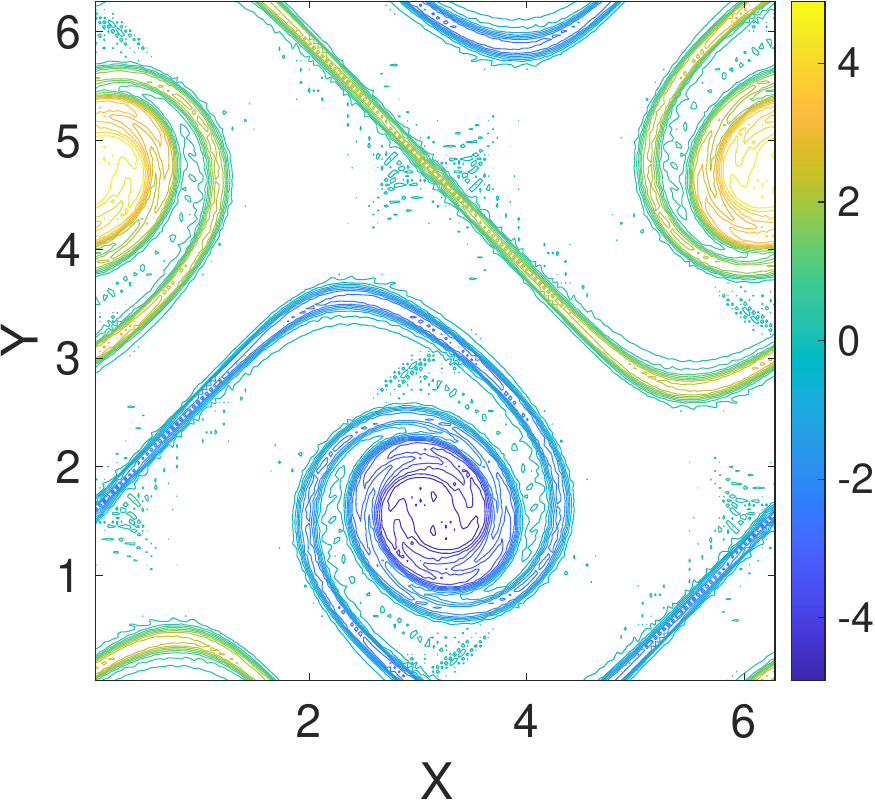}
\caption{$T=8$, with TVB2, P=100.}
\end{subfigure}
\begin{subfigure}{.49\textwidth}
\includegraphics[scale=0.42]{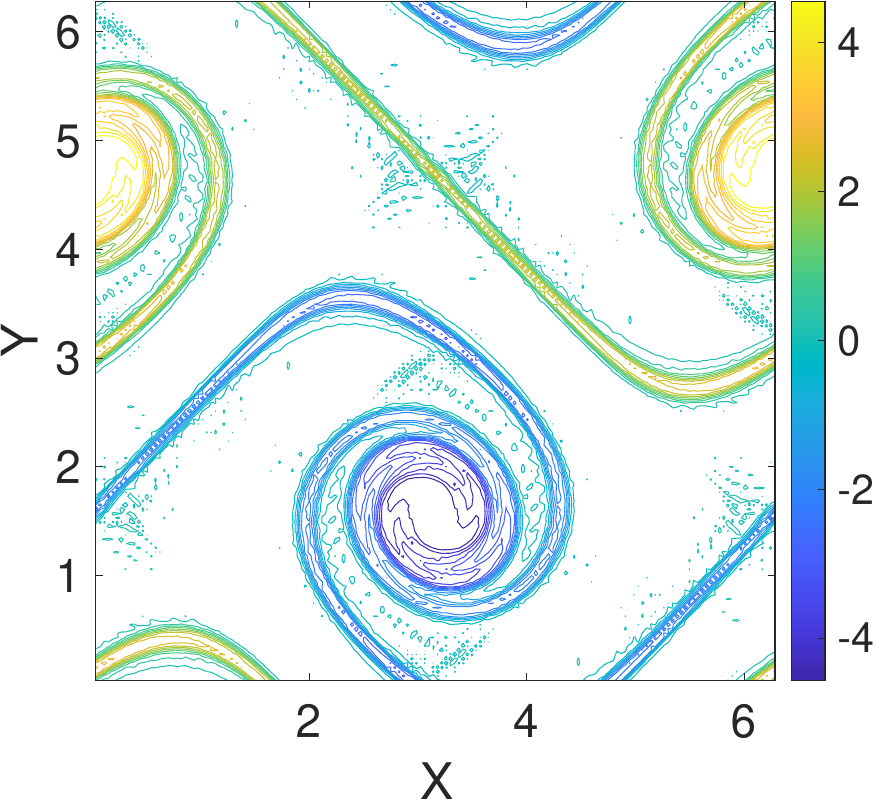}
\caption{$T=8$, with BP and TVB2, P=100.}
\end{subfigure}

\begin{subfigure}{.49\textwidth}
\includegraphics[scale=0.42]{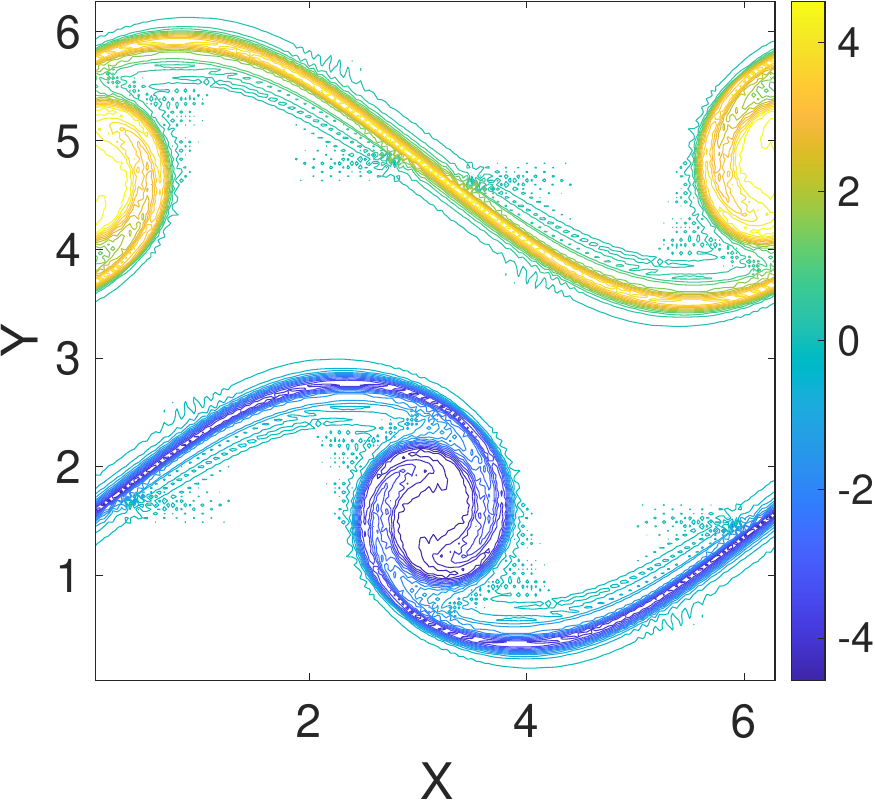}
\caption{$T=6$, with BP and TVB2, P=300.}
\end{subfigure}
\begin{subfigure}{.49\textwidth}
\includegraphics[scale=0.42]{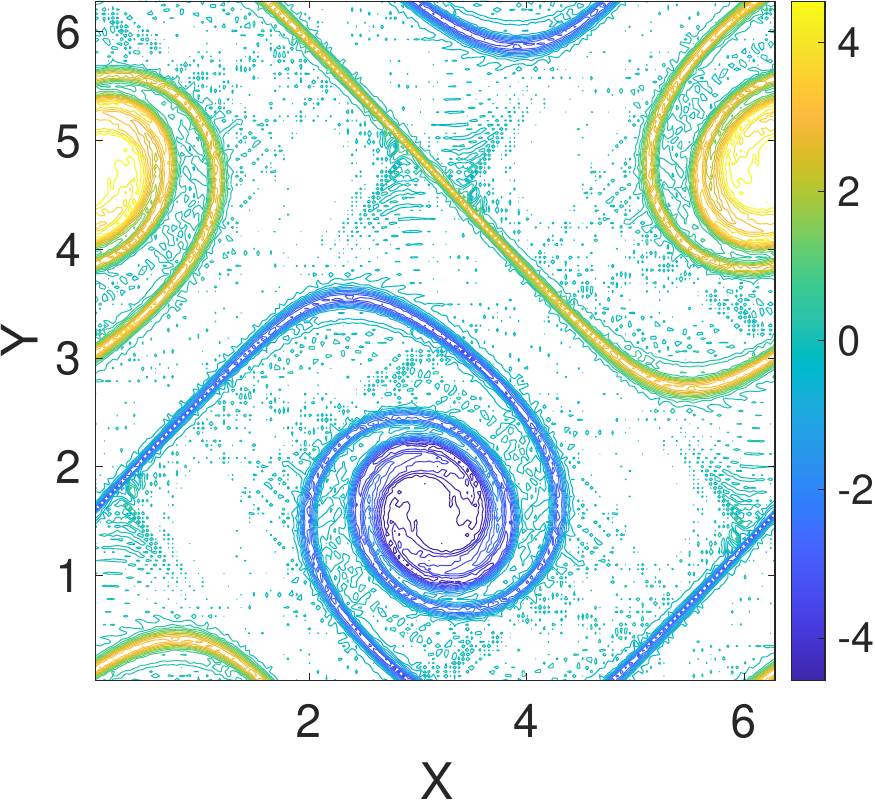}
\caption{$T=8$, with BP and TVB2, P=300.}
\end{subfigure}
\caption{Double shear layer problem. Fourth-order compact finite difference with SSP Runge–Kutta method on a $160\times 160$ mesh 
solving the incompressible Euler equation \eqref{incompressibleEuler}  at $T=6$ and $T=8$. The time step is $\Delta t=\frac{1}{24\max_x |\mathbf u_0|}\Delta x$. }
\label{doubleshearlayer3}
\end{figure}
 
\subsection{Vortex Patch Problem.}
 We test the limiters for the vortex patch problem in the domain $[0, 2\pi]\times[0, 2\pi]$ with a periodic boundary condition.
 The initial condition is
 $$\omega(x,y,0)=\left\{\begin{array}{lll}
-1, &(x,y)\in[\frac{\pi}{2},\frac{3\pi}{2}]\times[\frac{\pi}{4},\frac{3\pi}{4}];\\
1, &(x,y)\in[\frac{\pi}{2},\frac{3\pi}{2}]\times[\frac{5\pi}{4},\frac{7\pi}{4}];\\
0, &\mbox{otherwise.}\end{array}\right.$$
Numerical solutions for incompressible Euler are shown in Figure \ref{incompressibleEulerVP1},  Figure \ref{incompressibleEulerVP2}, Figure \ref{incompressibleEulerVP3} and Figure \ref{incompressibleEulerVP4}.
We can observe that the solutions generated by the compact finite difference scheme with only the bound-preserving limiter are still highly oscillatory for
the Euler equation without the TVB limiter.

Notice that the oscillations in Figure \ref{incompressibleEulerVP1}  suggest that the artificial viscosity induced by the bound-preserving limiter is quite low.
 
  \begin{figure}[ht]
  \begin{subfigure}{.49\textwidth}
\includegraphics[scale=0.41]{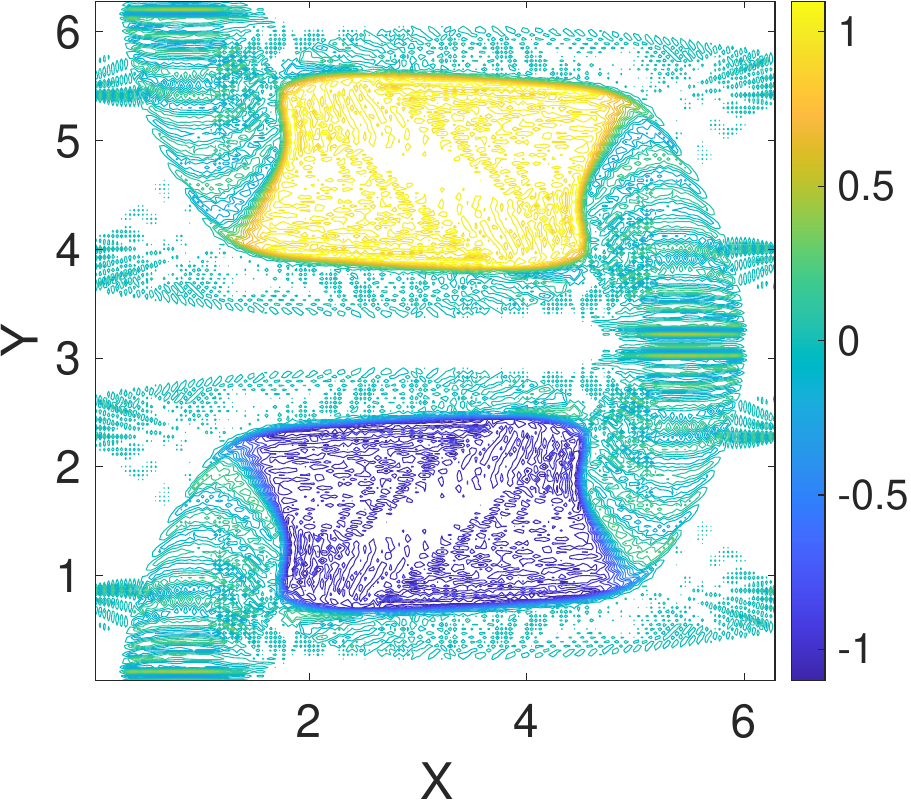}
\caption{$T=5$, with no limiter.}
\end{subfigure}
 \begin{subfigure}{.49\textwidth}
\includegraphics[scale=0.41]{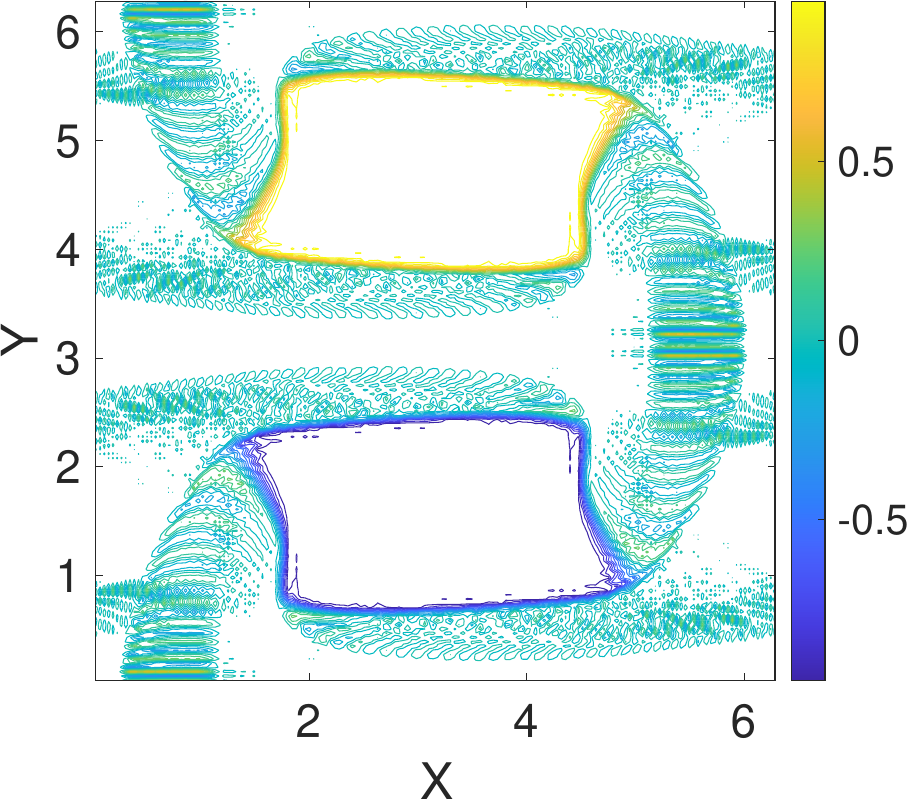}
\caption{$T=5$, with BP.}
\end{subfigure}

  \begin{subfigure}{.49\textwidth}
\includegraphics[scale=0.35]{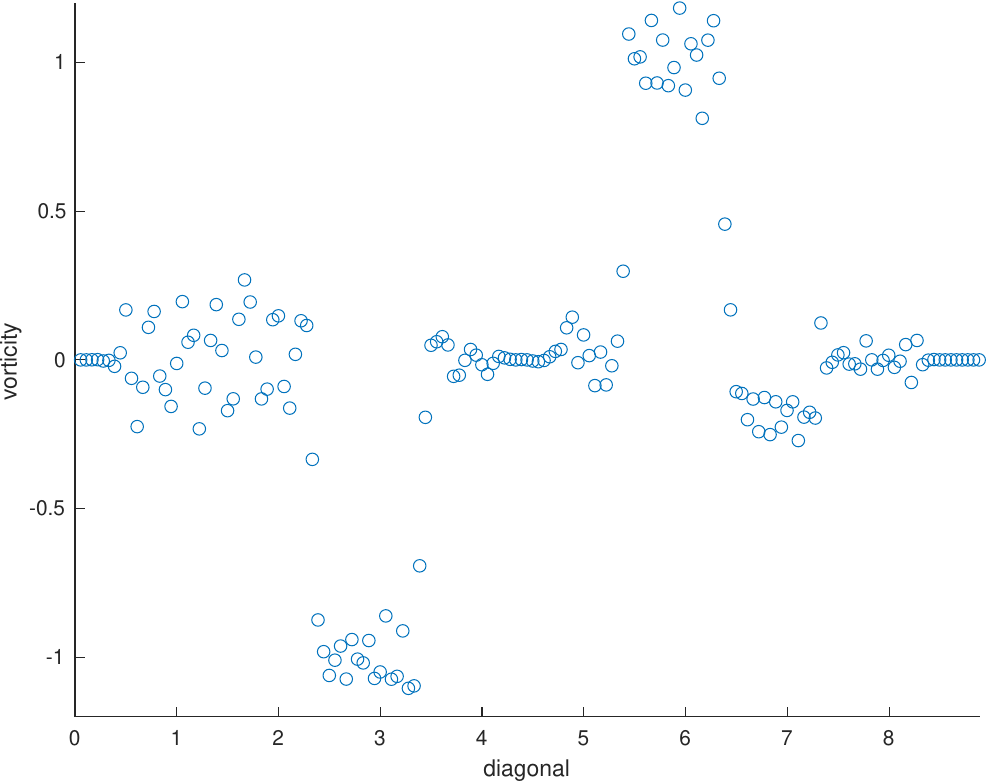}
\caption{$T=5$, with no limiter.}
\end{subfigure}
 \begin{subfigure}{.49\textwidth}
\includegraphics[scale=0.35]{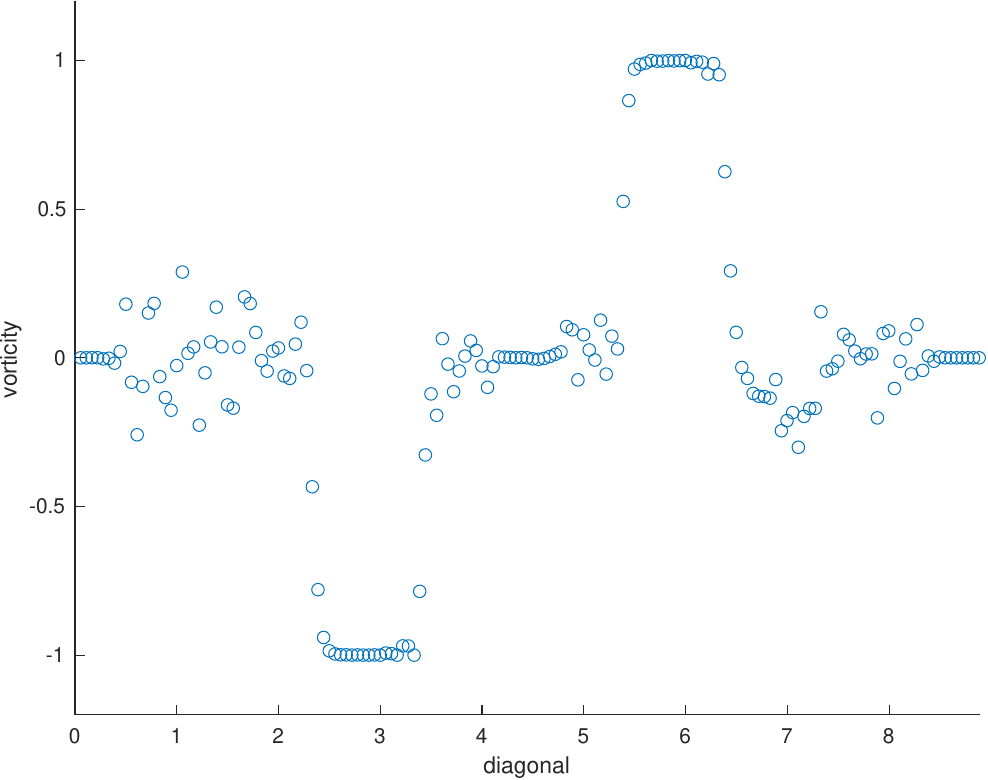}
\caption{$T=5$, with BP.}
\end{subfigure}
\caption{A fourth-order accurate compact finite difference scheme for the incompressible Euler equation at $T=5$ on a $160\times 160$ mesh. 
The time step is $\Delta t=\frac{1}{24\max|\mathbf u_0|}\Delta x$. The second row is the cut along the diagonal of the two-dimensional array. }
\label{incompressibleEulerVP1}
 \end{figure}

  \begin{figure}[ht]
\begin{subfigure}{.49\textwidth}
\includegraphics[scale=0.41]{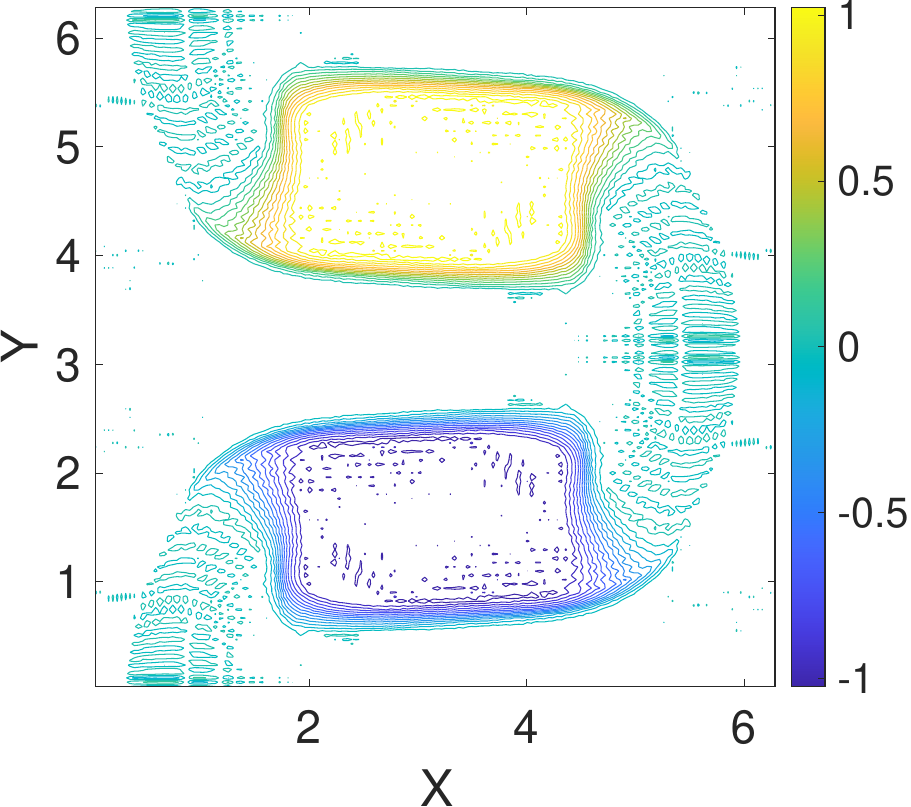}
\caption{$T=5$, with TVB1, P=10.}
\end{subfigure}
\begin{subfigure}{.49\textwidth}
\includegraphics[scale=0.41]{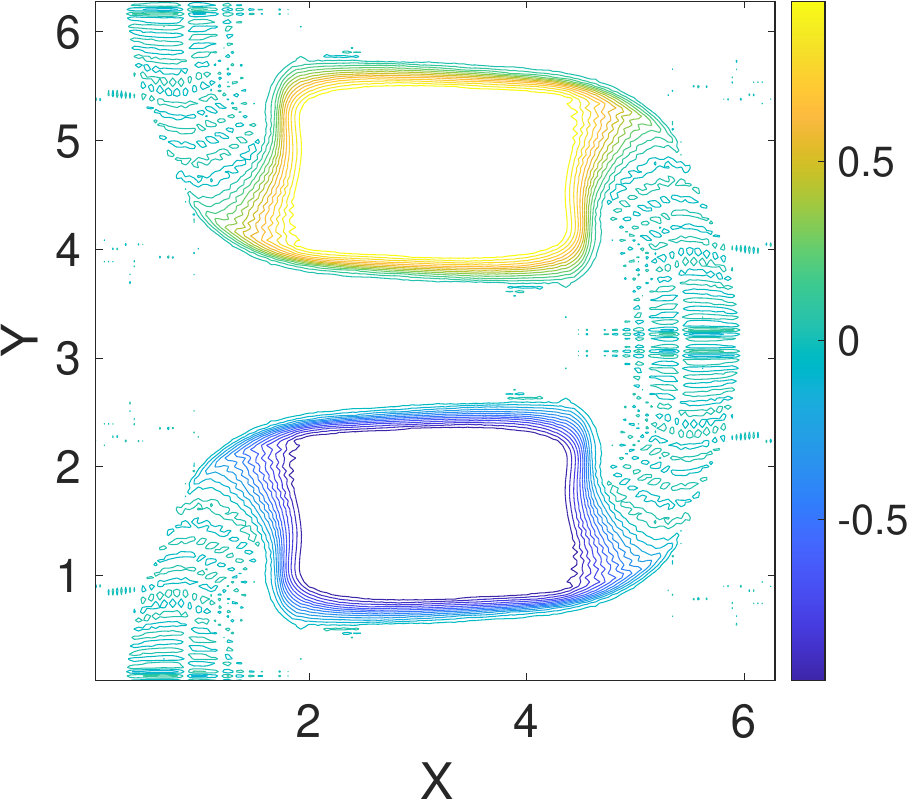}
\caption{$T=5$, with BP and TVB1, P=10.}
\end{subfigure}

\begin{subfigure}{.49\textwidth}
\includegraphics[scale=0.35]{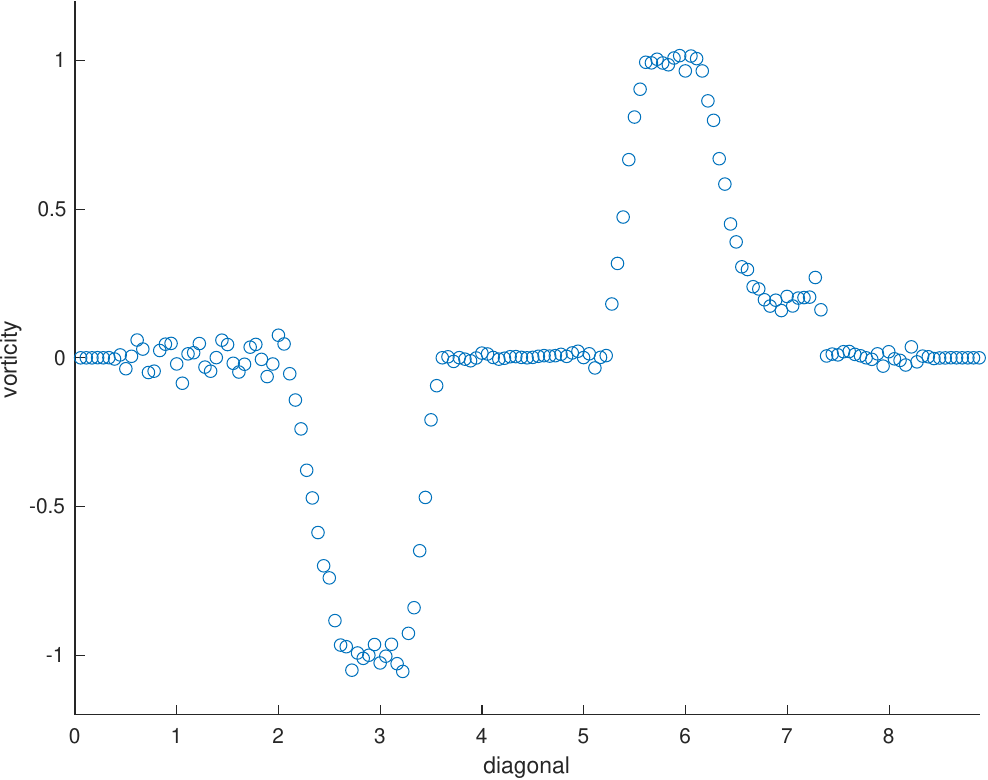}
\caption{$T=5$, with TVB1,  P = 10.}
\end{subfigure}
\begin{subfigure}{.49\textwidth}
\includegraphics[scale=0.35]{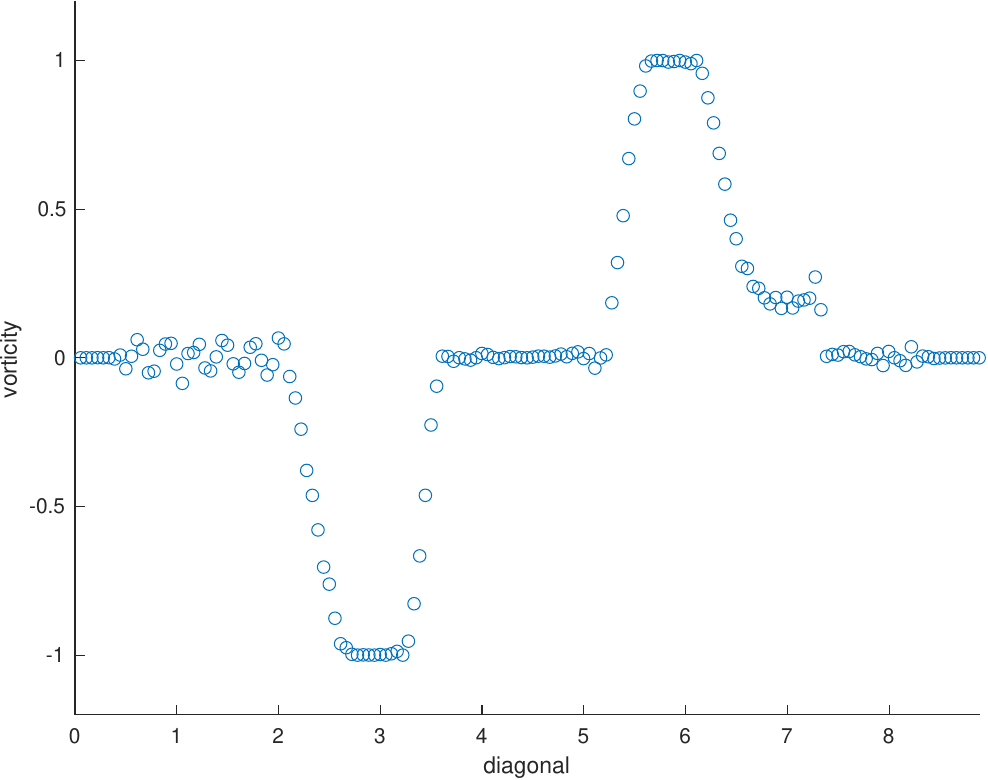}
\caption{$T=5$, with BP and TVB1, P = 10.}
\end{subfigure}
\caption{A fourth-order accurate compact finite difference scheme for the incompressible Euler equation at $T=5$ on a $160\times 160$ mesh. 
The time step is $\Delta t=\frac{1}{24\max|\mathbf u_0|}\Delta x$. The second row is the cut along the diagonal of the two-dimensional array. }
\label{incompressibleEulerVP2}
\end{figure}

\begin{figure}[ht]
\begin{subfigure}{.49\textwidth}
\includegraphics[scale=0.41]{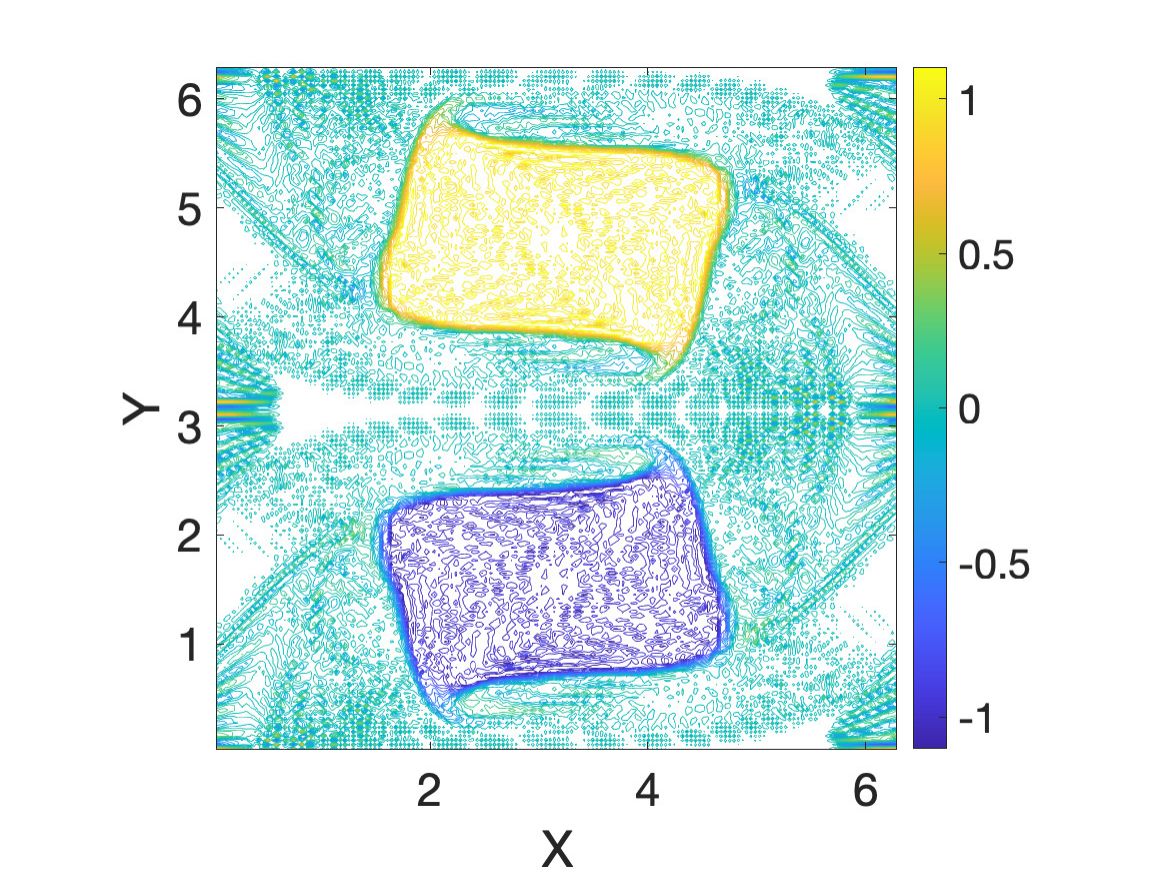}
\caption{$T=10$, with no limiter.}
\end{subfigure}
\begin{subfigure}{.49\textwidth}
\includegraphics[scale=0.41]{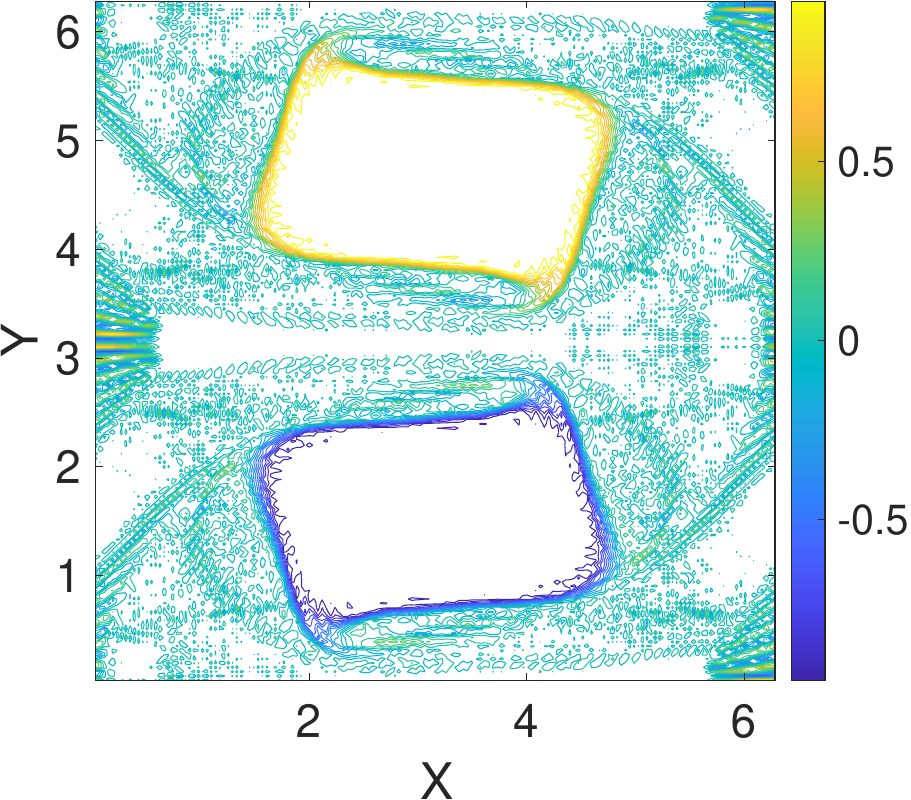}
\caption{$T=10$, with BP.}
\end{subfigure}

\begin{subfigure}{.49\textwidth}
\includegraphics[scale=0.35]{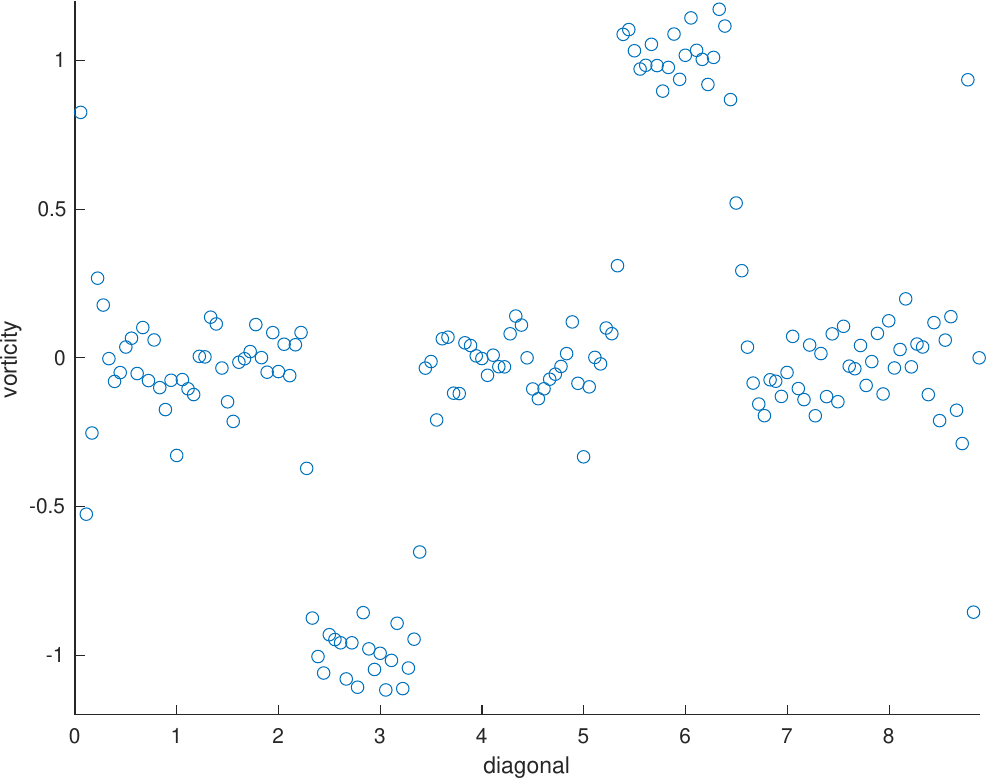}
\caption{$T=10$, with no limiter.}
\end{subfigure}
 \begin{subfigure}{.49\textwidth}
\includegraphics[scale=0.35]{VP_BP=1,TVB=0,T=5,N=160_diag-eps-converted-to.pdf}
\caption{$T=10$, with BP.}
\end{subfigure}
\caption{A fourth-order accurate compact finite difference scheme for the incompressible Euler equation at $T=5$ on a $160\times 160$ mesh. 
The time step is $\Delta t=\frac{1}{24\max|\mathbf u_0|}\Delta x$. The second row is the cut along the diagonal of the two-dimensional array. }
\label{incompressibleEulerVP3}
\end{figure}

\begin{figure}[ht]
\begin{subfigure}{.49\textwidth}
\includegraphics[scale=0.41]{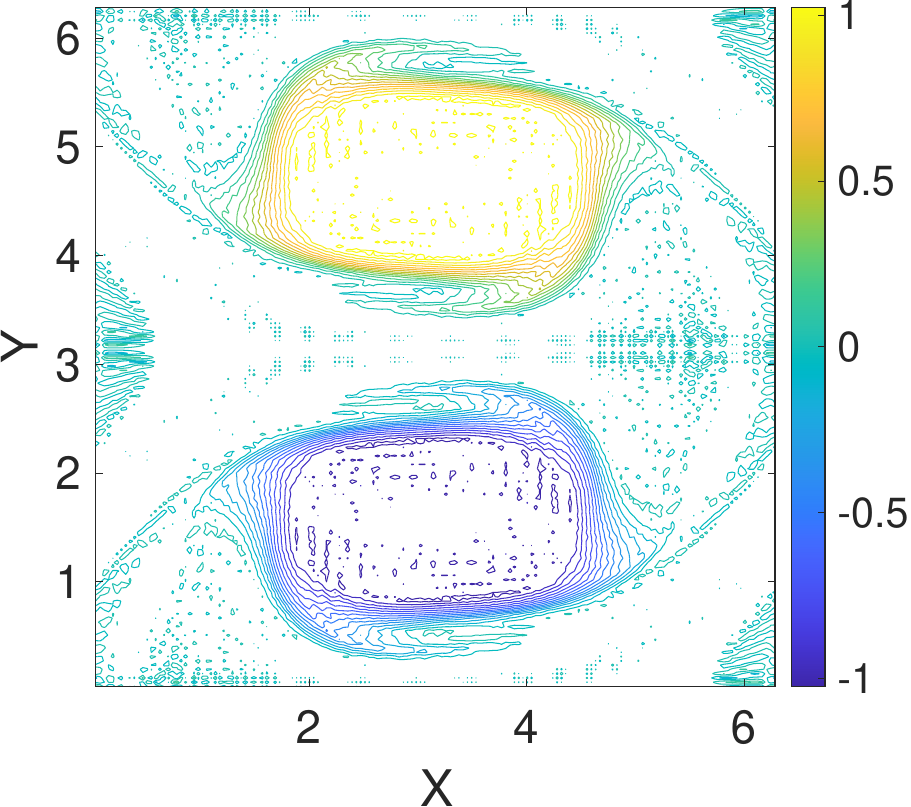}
\caption{$T=10$, with TVB1,  P=10.}
\end{subfigure}
\begin{subfigure}{.49\textwidth}
\includegraphics[scale=0.41]{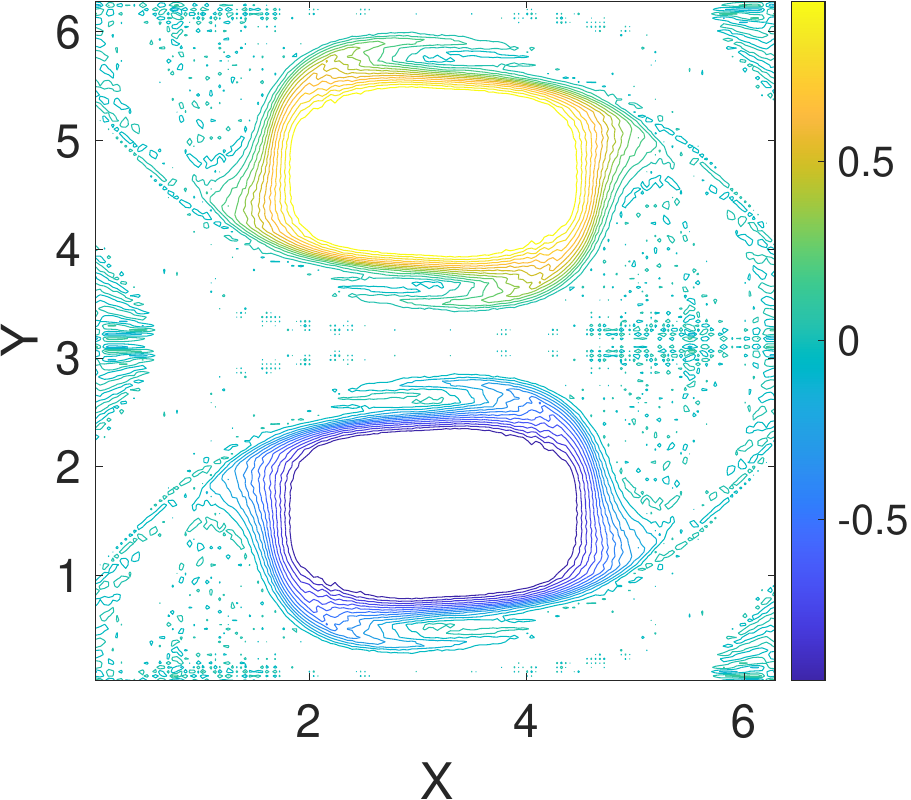}
\caption{$T=10$, with BP and TVB1, P=10.}
\end{subfigure}

\begin{subfigure}{.49\textwidth}
\includegraphics[scale=0.35]{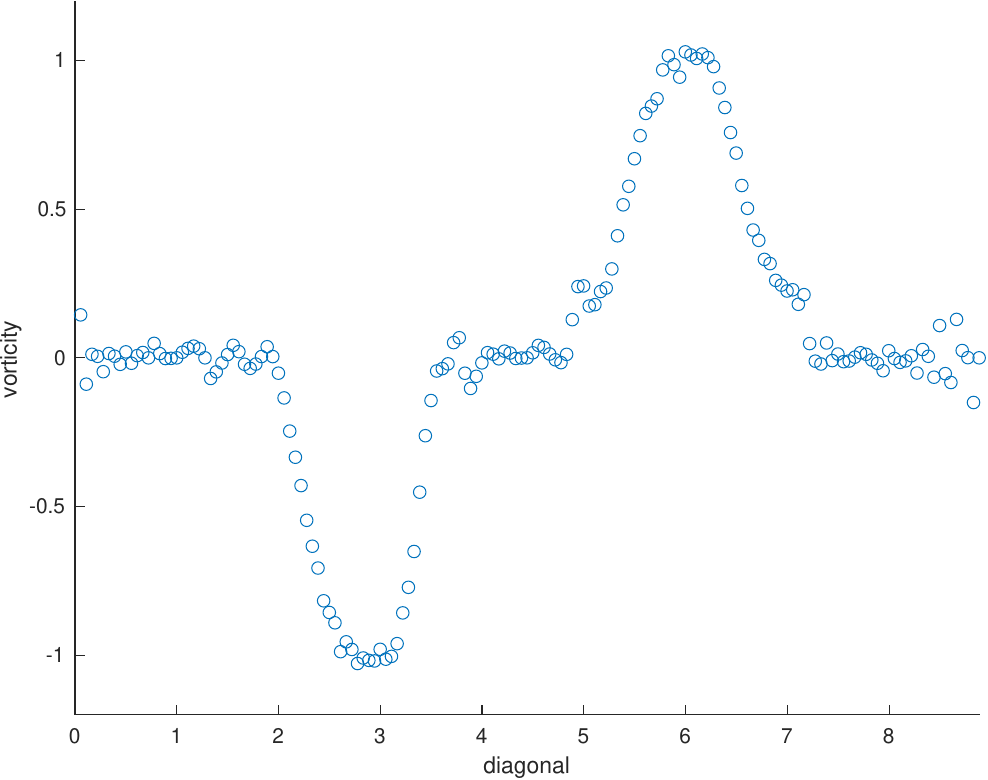}
\caption{$T=10$, with TVB1,  P = 10.}
\end{subfigure}
\begin{subfigure}{.49\textwidth}
\includegraphics[scale=0.35]{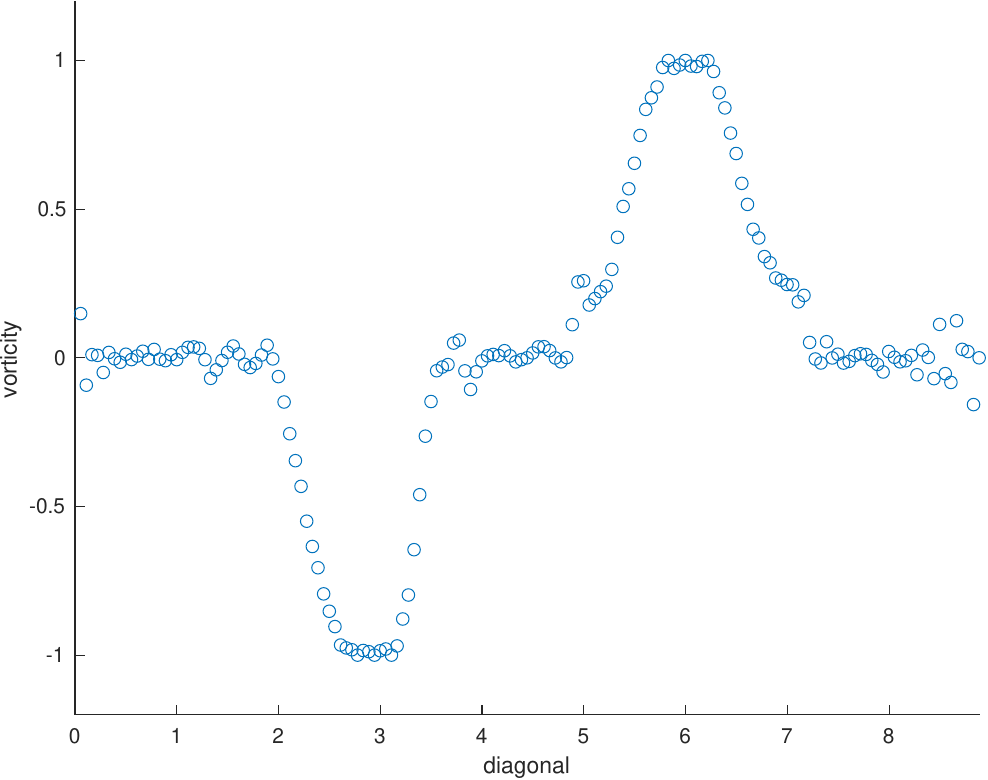}
\caption{$T=10$, with BP and TVB1, P = 10.}
\end{subfigure}
\caption{A fourth-order accurate compact finite difference scheme for the incompressible Euler equation at $T=10$ on a $160\times 160$ mesh. 
The time step is $\Delta t=\frac{1}{12\max|\mathbf u_0|}\Delta x$. The second row is the cut along the diagonal of the two-dimensional array.}
\label{incompressibleEulerVP4} 
 \end{figure}

 
\section{Concluding Remarks}
\label{sec-rmk}
We have proven that a simple limiter can preserve bounds for the fourth-order compact finite difference method solving 
 the two dimensional incompressible Euler equation,  with a discrete divergence-free velocity field.  We also prove that the TVB limiter modified from \cite{cockburn1994nonlinearly} does not affect the bound-preserving property of $\bar{\omega}$. With both the TVB limiter and the bound-preserving limiter, the numerical solutions of high order compact finite difference scheme can be rendered non-oscillatory and  strictly bound-preserving.
 
 For the sixth-order and eighth-order compact finite difference method for convection problem with weak monotonicty in \cite{li2018high},  the divergence-free velocity can be constructed accordingly, which gives a higher order bound-preserving scheme for the incompressible flow by applying Algorithm \ref{bp-limiter} for several times.  The TVB limiting procedure in Section \ref{sec-tvb-limiter-def} can also be defined with a similar result as Theorem \ref{tvb-avg-bp}.

\section*{Appendix A: Comparison With The Nine-point Discrete Laplacian}
\setcounter{equation}{0}
\label{appendix1}

Consider solving the two-dimensional Poisson equations $u_{xx}+u_{yy}=f$ with either homogeneous Dirichlet boundary conditions or periodic boundary conditions on 
a rectangular domain.
Let $\mathbf u$ be a $N_x\times N_y$ matrix with entries $u_{i,j}$ denoting the numerical solutions at a uniform grid $(x_i, y_j)=(\frac{i}{Nx}, \frac{j}{Ny})$. 
Let $\mathbf f$ be a $N_x\times N_y$ matrix with entries $f_{i,j}=f(x_i, y_j)$.
The fourth order compact finite difference method in Section \ref{sec-cfd-review} for $u_{xx}+u_{yy}=f$ can be written as:
\begin{equation}
\frac{1}{\Delta x^2}W_{2x}^{-1}D_{xx} \mathbf u +\frac{1}{\Delta y^2}W_{2y}^{-1}D_{yy} \mathbf u=f(\mathbf{u}).
\label{poisson-scheme1} 
\end{equation}
For convenience, we introduce two matrices,
\[U=\begin{pmatrix*}[l]
  u_{i-1, j+1} &u_{i, j+1} &u_{i+1, j+1} \\
  u_{i-1, j} &u_{i, j} &u_{i+1, j} \\
  u_{i-1, j-1} &u_{i, j-1} &u_{i+1, j-1}
 \end{pmatrix*}, \quad F=\begin{pmatrix*}[l]
  f_{i-1, j+1} &f_{i, j+1} &f_{i+1, j+1} \\
  f_{i-1, j} &f_{i, j} &f_{i+1, j} \\
  f_{i-1, j-1} &f_{i, j-1} &f_{i+1, j-1}
 \end{pmatrix*}.\]
Notice that the scheme \eqref{poisson-scheme1} is equivalent to 
\[\frac{1}{\Delta x^2}W_{2y}D_{xx} \mathbf u +\frac{1}{\Delta y^2}W_{2x}D_{yy} \mathbf u=W_{2x}W_{2y}f(\mathbf{u}),\]
which can be written as
\begin{equation}
 \frac{1}{12\Delta x^2}\begin{pmatrix}
                       1 & -2 & 1\\
                       10 & -20 & 10\\
                       1 & -2 & 1 \end{pmatrix} : U
           + \frac{1}{12\Delta y^2}\begin{pmatrix}
                       1 & 10 & 1\\
                       -2 & -20 & -2\\
                       1 & 10 & 1 \end{pmatrix} : U  
  =
 \frac{1}{144}
 \begin{pmatrix}
                       1 & 10 & 1\\
                       10 & 100 & 10\\
                       1 & 10 & 1 \end{pmatrix}
                       :
  F,
\label{poisson-scheme2} 
\end{equation}
where $:$ denotes the sum of all entrywise products in two matrices of the same size.

In particular, if $\Delta x=\Delta y=h$, the scheme above reduces to
\[\frac{1}{6h^2} \begin{pmatrix}
                       1 & 4 & 1\\
                       4 & -20 & 4\\
                       1 & 4 & 1 \end{pmatrix}  
 :  U=
\frac{1}{144}
 \begin{pmatrix}
                       1 & 10 & 1\\
                       10 & 100 & 10\\
                       1 & 10 & 1 \end{pmatrix}:
 F. 
\]
Recall that the classical nine-point discrete Laplacian \cite{leveque2007finite} for the Poisson equation can be written as
\begin{equation}
\frac{1}{12\Delta x^2}\begin{pmatrix}
                       1 & -2 & 1\\
                       10 & -20 & 10\\
                       1 & -2 & 1 \end{pmatrix}: U
           + \frac{1}{12\Delta y^2}\begin{pmatrix}
                       1 & 10 & 1\\
                       -2 & -20 & -2\\
                       1 & 10 & 1 \end{pmatrix}: U  
   =
\frac{1}{12}
 \begin{pmatrix}
                       0 & 1 & 0\\
                       1 & 8 & 1\\
                       0 & 1 & 0 \end{pmatrix}:
 F, 
\label{poisson-scheme3} 
\end{equation}
which reduces to the following under the assumption $\Delta x=\Delta y=h$,
\[
\frac{1}{6h^2} \begin{pmatrix}
                       1 & 4 & 1\\
                       4 & -20 & 4\\
                       1 & 4 & 1 \end{pmatrix}  
:  U =
 \frac{1}{12}
 \begin{pmatrix}
                       0 & 1 & 0\\
                       1 & 8 & 1\\
                       0 & 1 & 0 \end{pmatrix}:
 F. 
\]
Both schemes \eqref{poisson-scheme2} and \eqref{poisson-scheme3} are fourth order accurate and they have the same stencil in the left hand side. As to 
which scheme produces smaller errors, it seems to be problem dependent,
 see Figure \ref{fig-poisson}. Figure \ref{fig-poisson} shows the errors of two schemes \eqref{poisson-scheme2} and \eqref{poisson-scheme3} using uniform grids with $\Delta x=\frac23 \Delta y$ for solving the Poisson equation $u_{xx}+u_{yy}=f$
 on a rectangle $[0,1]\times[0,2]$ with Dirichlet boundary conditions. For solution 1, we have $u(x,y)=\sin(\pi x)\sin(\pi y)+2x$, for solution 2, we have $u(x,y)=\sin(\pi x)\sin(\pi y)+4 x^4 y^4$.

\begin{figure}[htbp]
\begin{subfigure}{.5\textwidth}
\centering
\includegraphics[scale=0.2]{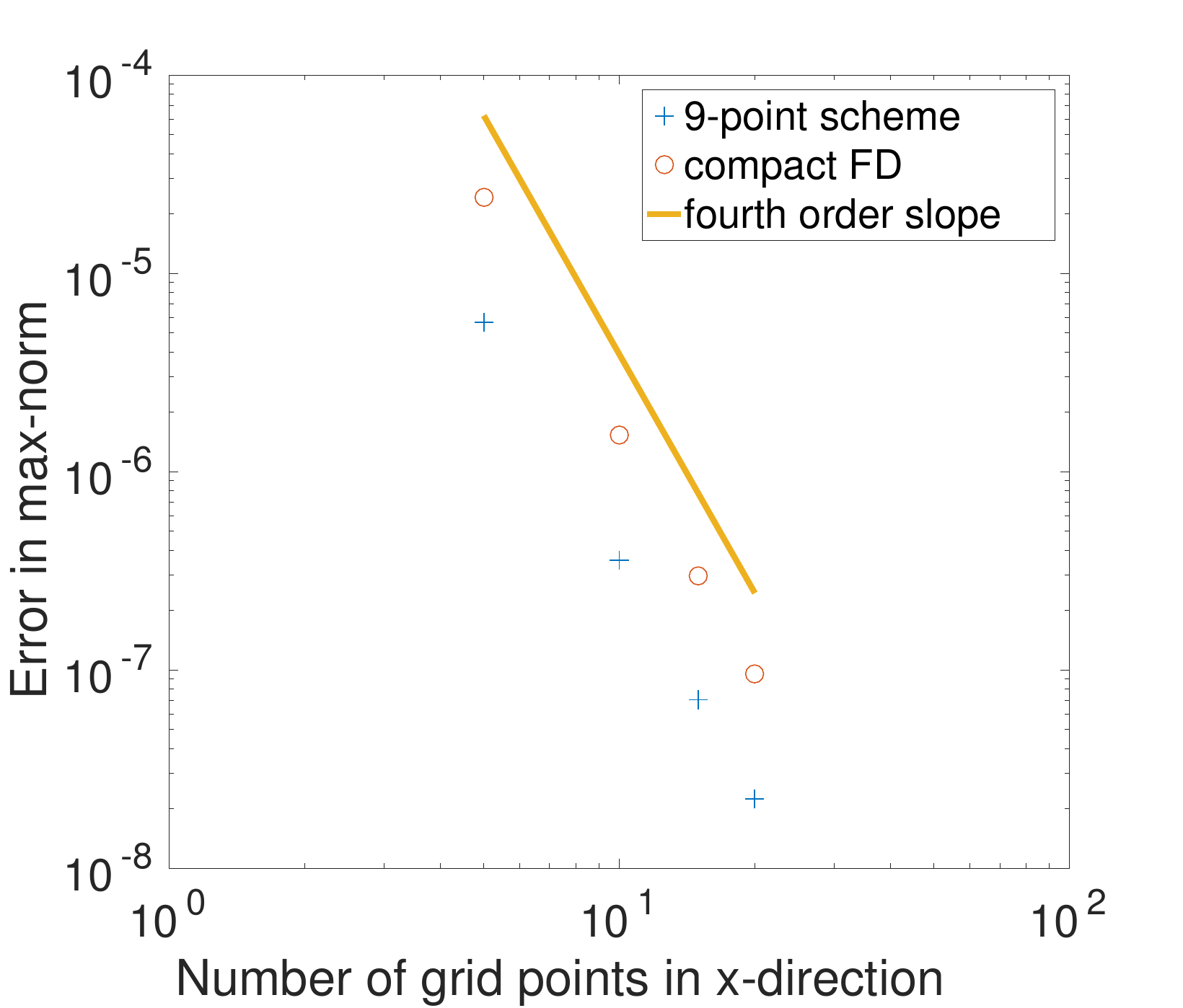}
\caption{Solution 1.}
\end{subfigure}
\begin{subfigure}{.49\textwidth}
\centering
\includegraphics[scale=0.2]{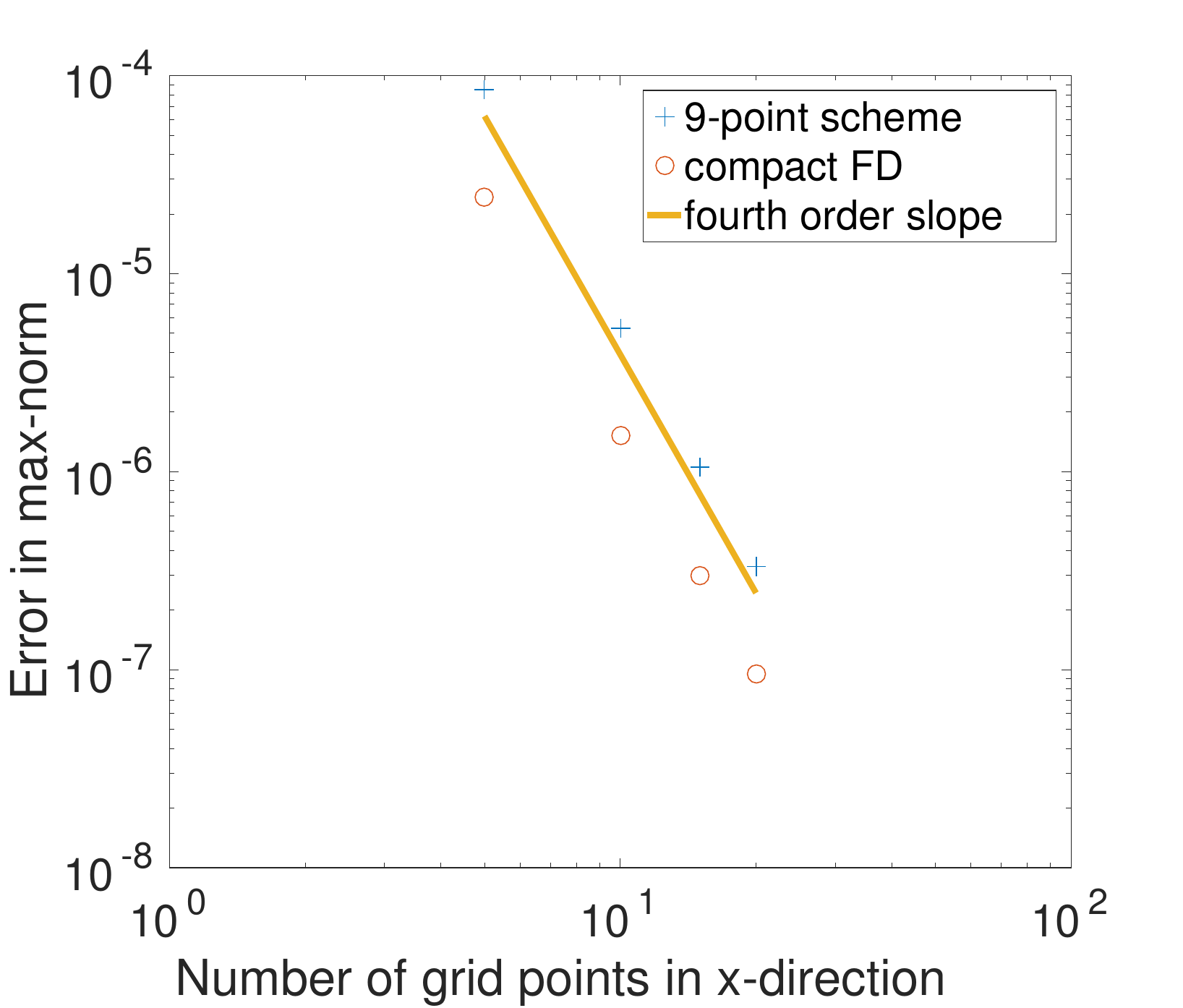}
\caption{Solution 2.}
\end{subfigure}
 \caption{Error comparison.}
 \label{fig-poisson}
\end{figure} 

\section*{Appendix B: $M$-matrices And A Discrete Maximum Principle}
\label{appendix2}
Consider solving the heat equation $u_t=u_{xx}+u_{yy}$ with a periodic boundary condition. It is well known that a discrete maximum principle is satisfied
under certain time step constraints if the spatial discretization is the nine-point discrete Laplacian or 
the compact scheme \eqref{poisson-scheme1} with backward Euler and Crank-Nicolson time discretizations.
For simplicity, we only consider the compact scheme \eqref{poisson-scheme1} and the discussion for the nine-point discrete Laplacian is similar. 
Assume $\Delta x=\Delta y=h$. For backward Euler, the scheme can be written as
\[
\frac{1}{144}
 \begin{pmatrix}
                       1 & 10 & 1\\
                       10 & 100 & 10\\
                       1 & 10 & 1 \end{pmatrix}  : ( U^{n+1}- U^{n})=\frac{\Delta t}{6h^2} \begin{pmatrix}
                       1 & 4 & 1\\
                       4 & -20 & 4\\
                       1 & 4 & 1 \end{pmatrix}  
:  U^{n+1} , 
 \]
thus
\[\frac{1}{144}
 \begin{pmatrix}
                       1 & 10 & 1\\
                       10 & 100 & 10\\
                       1 & 10 & 1 \end{pmatrix}:  U^{n+1}-\frac{\Delta t}{6h^2} \begin{pmatrix}
                       1 & 4 & 1\\
                       4 & -20 & 4\\
                       1 & 4 & 1 \end{pmatrix}:  U^{n+1}   =\frac{1}{144}
 \begin{pmatrix}
                       1 & 10 & 1\\
                       10 & 100 & 10\\
                       1 & 10 & 1 \end{pmatrix}
 :  U^n. \]
 Let $A$ and $B$ denote the matrices corresponding to the operator in the left hand side and right hand side above respectively, 
 then the scheme can be written as
 \[A \mathbf u^{n+1}= B \mathbf u^n,\]
and $A$ is a $M$-Matrix (diagonally dominant, positive diagonal entries and non-positive off diagonal entries) 
under the following constraint which allows very large time steps:
\[\frac{\Delta t}{h^2}\geq \frac{5}{48}.\]
The inverses of $M$-Matrices have non-negative entries, e.g., see \cite{qin2018implicit}. Thus $A^{-1}$ has non-negative entries. Moreover, it is straightforward to check that
$A\mathbf e=\mathbf e$ where $\mathbf e=\begin{pmatrix}
                                 1 & 1 & \cdots & 1
                                \end{pmatrix}^T.
$ Thus $A^{-1}\mathbf e=\mathbf e$, which implies the sum of each row of $A^{-1}$ is $1$ thus each row of $A^{-1}$ multiplying any vector $V$ is a convex 
combination of entries of $V$. It is also obvious that each entry of $B$ is non-negative and the sum of 
each row of $B$ is $1$. Therefore, $\mathbf u^{n+1}=A^{-1}B \mathbf u^n$ satisfies a discrete maximum principle:
\[ \min_{i,j} u^n_{i,j}\leq u^{n+1}_{i,j} \leq\max_{i,j} u^n_{i,j}.\]

For the second order accurate Crank-Nicolson time discretization, the scheme can be written as
\[ \frac{1}{144}
 \begin{pmatrix}
                       1 & 10 & 1\\
                       10 & 100 & 10\\
                       1 & 10 & 1 \end{pmatrix} : ( U^{n+1}- U^{n})=\frac{\Delta t}{6h^2} \begin{pmatrix}
                       1 & 4 & 1\\
                       4 & -20 & 4\\
                       1 & 4 & 1 \end{pmatrix}  
 : \frac{ U^{n+1}+ U^{n}}{2},  \]
 thus
\[
\left[\frac{1}{144}
 \begin{pmatrix}
                       1 & 10 & 1\\
                       10 & 100 & 10\\
                       1 & 10 & 1 \end{pmatrix}-\frac{\Delta t}{12h^2} \begin{pmatrix}
                       1 & 4 & 1\\
                       4 & -20 & 4\\
                       1 & 4 & 1 \end{pmatrix}   \right]:  U^{n+1}=\]\[
 \left[ \frac{1}{144}\begin{pmatrix}
                       1 & 10 & 1\\
                       10 & 100 & 10\\
                       1 & 10 & 1 \end{pmatrix}+\frac{\Delta t}{12h^2} \begin{pmatrix}
                       1 & 4 & 1\\
                       4 & -20 & 4\\
                       1 & 4 & 1 \end{pmatrix}  \right] 
 :  U^n.  \]
 Let the matrix-vector form of the scheme above be $A \mathbf u^{n+1}=B \mathbf u^n$. Then for $A$ to be an $M$-Matrix,  we only need $\frac{\Delta t}{h^2}\geq \frac{5}{24}$. However, for $B$ 
 to have non-negative entries, we need $\frac{\Delta t}{h^2}\leq \frac{5}{12}$. Thus the Crank-Nicolson method can ensure a discrete maximum principle if the time step satisfies,
 \[ \frac{5}{24}h^2\leq \Delta t \leq \frac{5}{12}h^2.\]

\section*{Appendix C: Fast Poisson Solvers}
\label{appendix3}
\subsection*{Dirichlet boundary conditions}
\label{appendix3-1}
Consider solving the Poisson equation $u_{xx}+u_{yy}=f(x,y)$ on a rectangular domain $[0,L_x]\times [0,L_y]$ with homogeneous Dirichlet boundary conditions.
Assume we use the grid $x_i=i \Delta x,$ $i=0,\cdots,N_x+1$ with uniform spacing $\Delta x=\frac{L_x}{N_x+1}$ for the $x$-variable 
and $y_j=j \Delta y,$ $j=0,\cdots,N_y+1$ with uniform spacing $\Delta y=\frac{L_y}{N_y+1}$ for $y$-variable.
Let $\mathbf u$ be a $N_x\times N_y$ matrix such that its $(i,j)$ entry $u_{i,j}$
is the numerical solution at interior grid points $(x_i, y_j)$.
Let $\mathbf F$ be a $(N_x+2)\times (N_y+2)$ matrix with entries $f(x_{i}, y_{j})$ for $i=0,\cdots,N_x+1$ and $j=0,\cdots,N_y+1$.

To obtain the matrix representation of the operator in \eqref{poisson-scheme2} and \eqref{poisson-scheme3}, we 
consider two operators:
\begin{itemize}
 \item Kronecker product of two matrices: if $A$ is $m \times n$ and $B$ is $p\times q$, then
 $A\otimes B$ is $mp\times nq$ give by
 \[A\otimes B=\begin{pmatrix}
                  a_{11}B & \cdots & a_{1n} B\\
                  \vdots & \vdots & \vdots\\
                  a_{m1}B & \cdots & a_{mn} B
                 \end{pmatrix}.
\]
\item For a $m\times n$ matrix $X$, $\vecm(X)$ denotes a column vector of size $mn$ made of the columns of $X$ stacked atop one another from left to right.
\end{itemize}

The following properties will be used:
\begin{enumerate}
 \item $(A \otimes B)(C \otimes D)=AC \otimes BD$.
 \item $(A \otimes B)^{-1}=A^{-1}\otimes B^{-1}$.
 \item $(B^T\otimes A)\vecm(X)=\vecm(AXB)$.
\end{enumerate}
We define two tridiagonal square matrices of size $N_x\times N_x$:
\[D_{xx}= \begin{pmatrix}
              -2 & 1 & & & &\\
              1 & -2 & 1 & & & \\              
              & 1 & -2 & 1 & & \\
              & & \ddots & \ddots & \ddots & \\
              & & & 1 & -2 &1 \\
              & & & & 1 & -2
             \end{pmatrix}, W_{2x}=\frac{1}{12} \begin{pmatrix}
              10& 1 & & & &\\
              1 & 10 & 1 & & & \\              
              & 1 & 10 & 1 & & \\
              & & \ddots & \ddots & \ddots & \\
              & & & 1 & 10 &1 \\
              & & & & 1 & 10
             \end{pmatrix}.\]
Let $\overline W_{2x}$ denote a $N_x\times (N_x+2)$ tridiagonal matrix of the following form:
\begin{equation}
\overline W_{2x}=\frac{1}{12} \begin{pmatrix}
              1& 10 & 1 & & &\\
               & 1 & 10 &1 & & \\              
               &  & \ddots & \ddots & \ddots\\
               & &  & 1 & 10 &1
             \end{pmatrix}.
             \label{appendix-barw2x}
\end{equation}
The matrices $D_{yy}$, $W_{2y}$ and $\overline W_{2y}$ are similarly defined. 

Then the scheme \eqref{poisson-scheme2} can be written in a matrix-vector form: 
\[ \frac{1}{\Delta x^2}D_{xx} \mathbf u W^T_{2y}+\frac{1}{\Delta y^2} W_{2x} \mathbf u D^T_{yy}= \overline W_{2x} \mathbf F\overline W^T_{2y},\]
or equivalently,
\begin{equation}
\left(W_{2y}\otimes \frac{1}{\Delta x^2} D_{xx}+\frac{1}{\Delta y^2} D_{yy}\otimes W_{2x}\right)\vecm( \mathbf u)=(\overline W_{2x}\otimes \overline W_{2y})\vecm( \mathbf F).
\label{poisson-matrix-1}
\end{equation}
Let $\mathbf{h}_x=[h_1,h_2,\cdots, h_{N_x}]^T$ with $h_i=\frac{i}{N_x+1}$, and $\sin(m\pi\mathbf h_x)$  denote 
a column vector of size $N_x$ with its $i$-th entry being $\sin(m\pi h_i)$. Then $\sin(m\pi \mathbf{h}_x)$ are the eigenvectors of $D_{xx}$ 
and $W_{2x}$ with the associated eigenvalues being $2\cos(\frac{m\pi}{N_x+1})-2$ and $\frac56+\frac16\cos(\frac{m\pi}{N_x+1})$ respectively for $m=1,\cdots, N_x$.
Let $$S_{x}=[\sin(\pi\mathbf h_x), \sin(2\pi\mathbf h_x),\cdots, \sin(N_x \pi\mathbf h_x)]$$ be the $N_x\times N_x$ eigenvector matrix, then 
$S_x$ is a symmetric matrix.
Let $\Lambda_{1x}$ denote a diagonal matrix with diagonal entries $2\cos(\frac{m\pi}{N_x+1})-2$
and  $\Lambda_{2x}$ denote a diagonal matrix with diagonal entries $\frac56+\frac16\cos(\frac{m\pi}{N_x+1})$, then 
we have $D_{xx}=S_x \Lambda_{1x}S_x^{-1}$ and $W_{2x}=S_x \Lambda_{2x}S_x^{-1}$, thus
\[  W_{2y} \otimes D_{xx}=(S_y \Lambda_{2y } S_y^{-1})\otimes( S_x \Lambda_{1x}S_x^{-1})=(S_y\otimes S_x)(\Lambda_{2y}\otimes \Lambda_{1x})(S_y^{-1}\otimes S_x^{-1}).\]
The scheme can be written as
\[(S_y\otimes S_x)(\frac{1}{\Delta x^2}\Lambda_{2y}\otimes \Lambda_{1x}+\frac{1}{\Delta y^2}\Lambda_{1y}\otimes \Lambda_{2x})(S_y^{-1}
\otimes S_x^{-1})\vecm(\mathbf u)=(\overline W_{2y}\otimes\overline  W_{2x})\vecm(\mathbf F).\]
Let $\Lambda$ be a 
$N_x\times N_y$ matrix with $\Lambda_{i,j}$ being equal to
\[\frac{1}{3\Delta x^2}\left(\cos(\frac{i\pi}{N_x+1})-1\right)
\left(\cos(\frac{m\pi}{N_y+1})+5\right)+\frac{1}{3 \Delta y^2}\left(\cos(\frac{m\pi}{N_x+1})+5\right) \left(\cos(\frac{j\pi}{N_y+1})-1\right), \]
then $\vecm(\Lambda)$ are precisely the diagonal entries of the diagonal matrix 
$\frac{1}{\Delta x^2}\Lambda_{2y}\otimes \Lambda_{1x}+\frac{1}{\Delta y^2}\Lambda_{1y}\otimes \Lambda_{2x}$, 
then the scheme above is equivalent to 
\[  S_x (\Lambda\circ(S_x^{-1}\mathbf u S_y^{-1})) S_y= \overline W_{2x}\mathbf  F\overline W^T_{2y},\]
where the symmetry of  $S$ matrices is used.
The solution is given by
\begin{equation}
  \mathbf u = S_x\{[S_x^{-1}(\overline W_{2x}\mathbf F\overline W^T_{2y})S_y^{-1}]./\Lambda \}S_y,
  \label{poisson-sol1}
\end{equation}
where $./$ denotes the entrywise division for two matrices of the same size.

Since the multiplication of the matrices $S$ and $S^{-1}$ can be implemented by the {\it Discrete Sine Transform},  \eqref{poisson-sol1} gives a fast Poisson solver.

For nonhomogeneous Dirichlet boundary conditions, the fourth order accurate compact finite difference scheme can also be written in the form of \eqref{poisson-matrix-1}:
\begin{equation}
\left(W_{2y}\otimes \frac{1}{\Delta x^2} D_{xx}+\frac{1}{\Delta y^2} D_{yy}\otimes W_{2x}\right)\vecm(\mathbf u)=\vecm(\tilde {\mathbf F}),
\label{poisson-matrix-2}
\end{equation}
where $\tilde{\mathbf F}$ consists of both $\mathbf F$ and the Dirichlet boundary conditions. Thus the scheme can still be efficiently implemented by
the {\it Discrete Sine Transform}.

\subsection*{Periodic boundary conditions}
 For periodic boundary conditions on a rectangular domain, we should consider the uniform grid 
 $x_i=i \Delta x,$ $i=1,\cdots,N_x$ with $\Delta x=\frac{L_x}{N_x}$
and $y_j=j \Delta y,$ $j=1,\cdots,N_y$ with uniform spacing $\Delta y=\frac{L_y}{N_y}$, then the 
fourth order accurate compact finite difference scheme can still be written in the form of \eqref{poisson-matrix-1}
with the $D_{xx}$, $D_{yy}$, $W_{2x}$ and $W_{2y}$ matrices being redefined as circulant matrices:
\[D_{xx}= \begin{pmatrix}
              -2 & 1 & & & &1\\
              1 & -2 & 1 & & & \\              
              & 1 & -2 & 1 & & \\
              & & \ddots & \ddots & \ddots & \\
              & & & 1 & -2 &1 \\
              1 & & & & 1 & -2
             \end{pmatrix}, W_{2x}=\frac{1}{12} \begin{pmatrix}
              10& 1 & & & & 1\\
              1 & 10 & 1 & & & \\              
              & 1 & 10 & 1 & & \\
              & & \ddots & \ddots & \ddots & \\
              & & & 1 & 10 &1 \\
              1& & & & 1 & 10
             \end{pmatrix}.\]
The Discrete Fourier Matrix is the eigenvector matrix for any circulant matrices, and the corresponding eigenvalues
are for $D_{xx}$ and $W_{2x}$ are $2\cos(\frac{m2\pi}{N_x})-2$ and $\frac16\cos(\frac{m2\pi}{N_x})+\frac56$ for $m=0,1,2,\cdots, Nx-1$. 
The  matrix $W_{2y}\otimes \frac{1}{\Delta x^2} D_{xx}+\frac{1}{\Delta y^2} D_{yy}\otimes W_{2x}$
is singular because its first eigenvalue $\Lambda_{1,1}$ is zero. Nonetheless, the scheme can still be implemented by solving \eqref{poisson-sol1} with Fast Fourier Transform. For the zero eigenvalue, 
we can simply reset the division by eigenvalue zero to zero. Since the eigenvector for eigenvalue zero is $\mathbf e=\begin{pmatrix}
                                 1 & 1 & \cdots & 1
                                \end{pmatrix}^T$, and 
the columns of the Discrete Fourier Matrix are orthogonal to one another,
resetting the division by eigenvalue zero to zero simply means that we obtain a numerical solution satisfying $\sum_i \sum_j u_{i,j}=0$.
And this is also the least square solution to the singular linear system.

\subsection*{Neumann boundary conditions}
For Dirichlet and periodic boundary conditions, we can invert the matrix coefficient matrix in   \eqref{poisson-matrix-1}
using eigenvectors of much smaller matrices $W_{2x}$ and $D_{xx}$ due to the fact that $W_{2x}-\frac{1}{12}D_{xx}$ is the identity matrix $Id$. 
Here we discuss how to achieve a fourth order accurate boundary approximation for Neumann boundary conditions by keeping $W_{2x}-\frac{1}{12}D_{xx}=Id$. 
We first consider a one-dimensional problem with homogeneous Neumann boundary conditions:
\begin{align*}
 &u''(x)=f(x), x\in[0,L_x], \\ 
 & u'(0)=u'(L_x)=0.   
  \end{align*}
Assume we use the uniform 
grid $x_i=i \Delta x,$ $i=0,\cdots,N_x+1$ with $\Delta x=\frac{L_x}{N_x+1}$. The two boundary point values
$u_0$ and $u_{N_x+1}$ can be expressed in terms of interior point values through boundary conditions.
For approximating the boundary conditions, we can apply the fourth order one-sided difference at $x=0$:
\[ u'(0)\approx\frac{-25 u(0)+48 u(\Delta x) -36 u(2\Delta x)+16u(3\Delta x)-3u(4\Delta x) }{12 \Delta x}\]
which implies the finite difference approximation: 
\[u_{0}=\frac{48 u_1-36 u_{2} +16 u_3-3 u_4}{25}.\]
Define two column vectors:
\[\mathbf u=[u_1 , u_2, \cdots, u_{N_x}]^T,\quad \mathbf f=[f(x_0), f(x_1),\cdots, f(x_{N_x}), f(x_{N_x+1}) ]^T,\]
then a fourth order accurate compact finite difference scheme can be written as 
\[ \frac{1}{\Delta x^2} \overline D_{xx} I_{x}\mathbf u=\overline W_{2x} \mathbf f,\]
where $\overline W_{2x}$ is the same as in \eqref{appendix-barw2x}, and $\overline D_{xx}$ is a matrix of size $N_x\times(N_x+2)$ and $I_{x}$
is a matrix of size $(N_x+2)\times N_x$:
\[\overline D_{xx}= \begin{pmatrix}
              1 & -2 & 1 & & & \\              
              & 1 & -2 & 1 & & \\
              & & \ddots & \ddots & \ddots & \\
              & & & 1 & -2 &1 
             \end{pmatrix}, I_{x}= \begin{pmatrix}
              \frac{48}{25} & -\frac{36}{25} &\frac{16}{25} &-\frac{3}{25} & &\\
               & 1 &  & & & \\              
              &  & 1 &  & & \\
              & &  & \ddots &  & \\
              & & &  & 1 & \\
               & & -\frac{3}{25} & \frac{16}{25} & -\frac{36}{25} &       \frac{48}{25}
             \end{pmatrix}.\]
Now consider solving the Poisson equation $u_{xx}+u_{yy}=f(x,y)$ on a rectangular domain $[0,L_x]\times [0,L_y]$ with homogeneous Neumann boundary conditions.
Assume we use the grid  $x_i=i \Delta x,$ $i=0,\cdots,N_x+1$ with $\Delta x=\frac{L_x}{N_x+1}$
and $y_j=j \Delta y,$ $j=0,\cdots,N_y+1$ with uniform spacing $\Delta y=\frac{L_y}{N_y+1}$.
Let $\mathbf u$ be a $N_x\times N_y$ matrix such that $u_{i,j}$ is the numerical solution at $(x_i, y_j)$ and
$\mathbf F$ be a $(N_x+2)\times (N_y+2)$ matrix with entries $f(x_{i}, y_{j})$ ($i=0,\cdots, N_x+1$, $j=0,\cdots, N_y+1$). 
Then a fourth order accurate compact finite difference scheme can be written as
\[ \frac{1}{\Delta x^2}\overline D_{xx}I_x \mathbf u I_y^T \overline{W}_{2y}^T+\frac{1}{\Delta y^2}\overline W_{2x}I_x \mathbf u I_y^T \overline{D}_{yy}^T=  \overline W_{2x}\mathbf F\overline W^T_{2y}.\]
Let $D_{xx}=\overline D_{xx}I_x$ and $W_{2x}=\overline W_{2x}I_x$, then the scheme can be written as \eqref{poisson-matrix-1}.

Notice that 
$W_{2x}-\frac{1}{12}D_{xx}=(\overline W_{2x}-\frac{1}{12}\overline D_{xx})I_x$ is still the identity matrix thus $W_{2x}$ and $D_{xx}$ still have the same eigenvectors.
Let $S$ be the eigenvector matrix and $\Lambda_1$ and $\Lambda_2$
be diagonal matrices with eigenvalues, then the scheme can still be implemented as \eqref{poisson-sol1}. The eigenvectors $S$ and the eigenvalues can be obtained 
by computing eigenvalue problems for two small matrices $D_{xx}$ of size $N_x\times N_x$ and $D_{yy}$ of size $N_y\times N_y$. If such a
Poisson problem needs to be solved in each time step in a time-dependent problem such as the incompressible flow equations, then 
this is an efficient Poisson solver because $S$ and $\Lambda$ can be computed 
before time evolution without considering eigenvalue problems for any matrix of size $N_xN_y\times N_xN_y$. 

For nonhomogeneous Neumann boundary conditions, 
the point values of $u$ along the boundary should be expressed in terms of interior ones as follows:
\begin{enumerate}
 \item First obtain the point values except the two cell ends (i.e., corner points of the rectangular domain) for each of the four boundary line segments. For instance,
 if the left boundary condition is $\frac{\partial u}{\partial x}(0,y)=g(y)$, then we obtain 
 \[u_{0,j}=\frac{48 u_{1,j}-36 u_{2,j} +16 u_{3,j}-3 u_{4,j}+12\Delta x g(y_j)}{25},\quad j=1,\cdots, N_y.\]
 \item Second, obtain the approximation at four corners using the point values along the boundary. For instance, if the bottom boundary condition is $\frac{\partial u}{\partial y}(x,0)=h(x)$, then 
 \[u_{0,0}=\frac{48 u_{1,0}-36 u_{2,0} +16 u_{3,0}-3 u_{4,0}+12\Delta y h(0)}{25}\]
\end{enumerate}
The scheme can still be written as \eqref{poisson-matrix-2} with $\tilde{\mathbf F}$ consisting of ${\mathbf F}$ and the nonhomogeneous boundary conditions. 
Notice that the matrix in \eqref{poisson-matrix-2} is singular thus we need to reset the division by eigenvalue zero to zero, which  however no longer means that the obtained
solution satisfies $\sum_i\sum_j u_{i,j}=0$ since the eigenvectors are not necessarily orthogonal to one another. 
See Figure \ref{fig-poisson-neumann} for the accuracy test of the fourth order compact finite difference scheme using uniform grids with $\Delta x=\frac32 \Delta y$ 
 for solving the Poisson equation $u_{xx}+u_{yy}=f$
 on a rectangle $[0,1]\times[0,2]$ with nonhomogeneous Neumann boundary conditions. The exact solution is $u(x,y)=\cos(\pi x)\cos(3\pi y)+\sin(\pi y)+x^4$. Since the solutions to Neumann boundary conditions are unique up to any constant, when computing
errors, we need to add a constant $\frac{1}{N_x}\frac{1}{N_y}\sum_{i,j} [u(x_i, y_j)-u_{i,j}]$ to each entry of $\mathbf u$.

\begin{figure}[htbp]
\begin{subfigure}{.5\textwidth}
\centering
\includegraphics[scale=0.2]{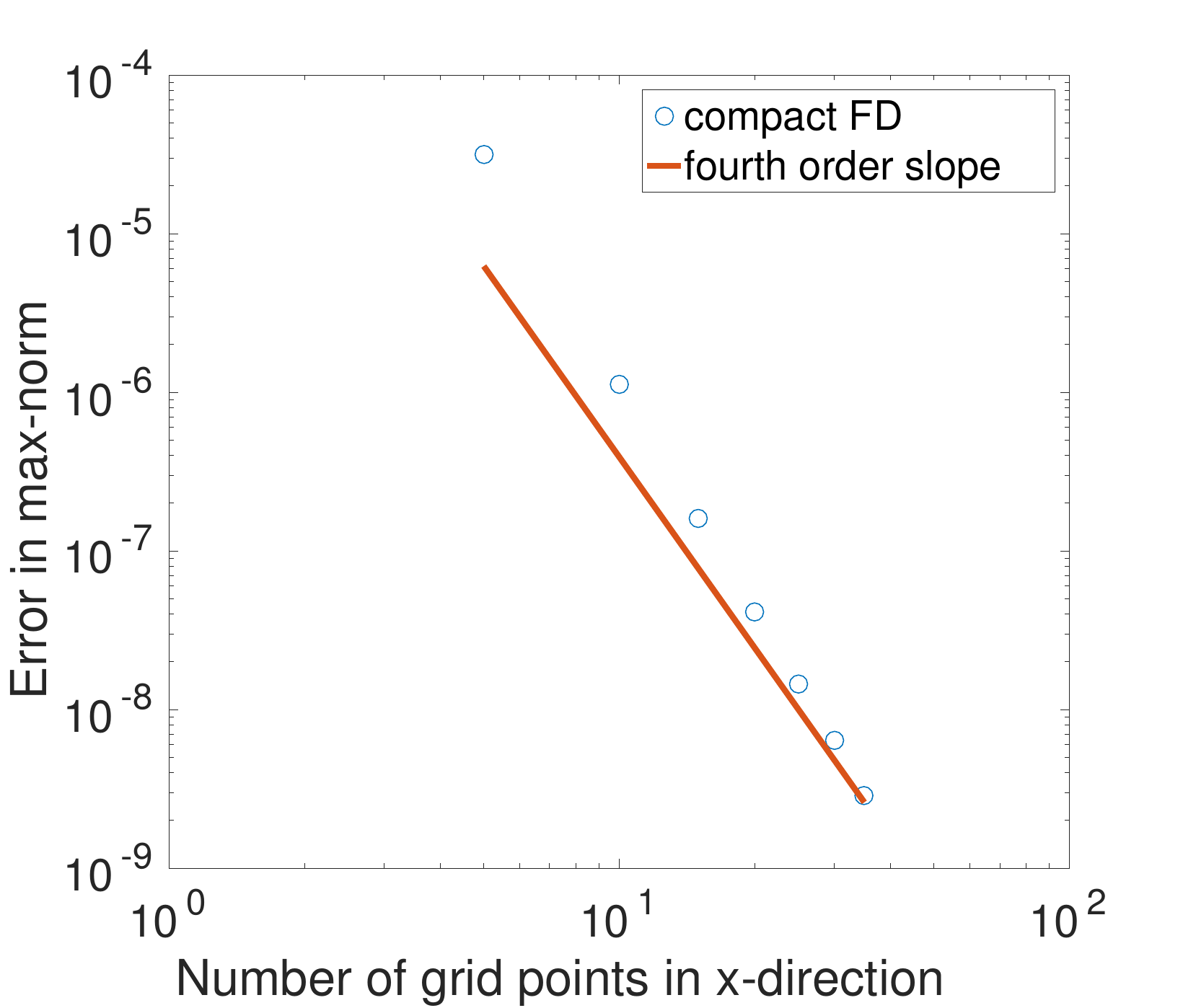}
\caption{Convergence rate.}
\end{subfigure}
\begin{subfigure}{.49\textwidth}
\centering
\includegraphics[scale=0.2]{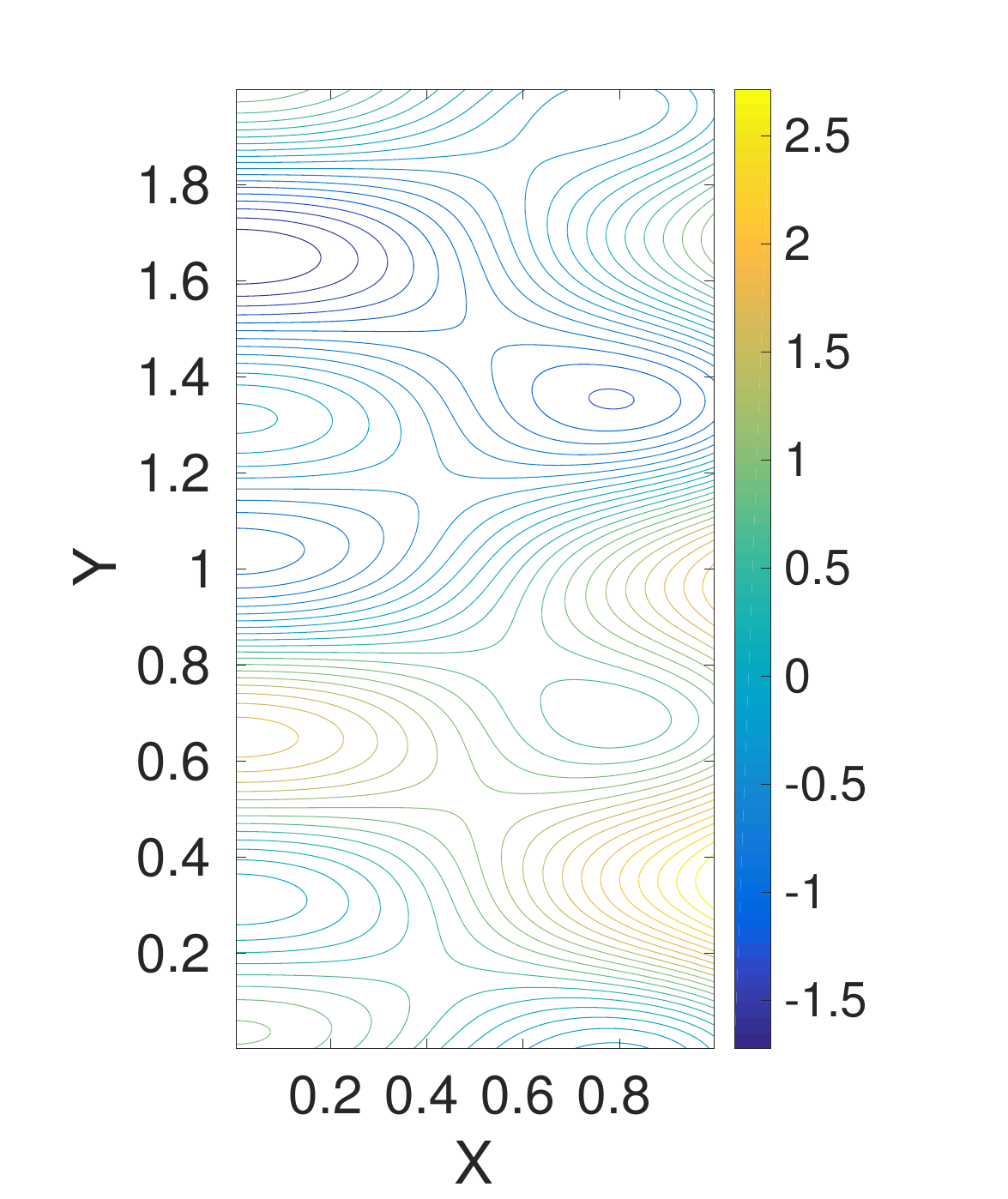}
\caption{The contour of the solution.}
\end{subfigure}
 \caption{Accuracy test for Neumann boundary condition.}
 \label{fig-poisson-neumann}
\end{figure} 

\section*{Statements and Declarations}

\subsection*{Funding} The research is supported by NSF DMS-1913120.
\subsection*{Conflict of interest}
On behalf of all authors, the corresponding author states that there is no conflict of interest. 

\bibliographystyle{siamplain}

\bibliography{references.bib}

\end{document}